\def\RSthmtxt{theorem~}\newref{thm}{name = \RSthmtxt}}
\def\RSlemtxt{lemma~}\newref{lem}{name = \RSlemtxt}}
\theoremstyle{plain}
\newtheorem{thm}{\protect\theoremname}
\theoremstyle{remark}
\newtheorem{rem}[thm]{\protect\remarkname}
\theoremstyle{definition}
\newtheorem{defn}[thm]{\protect\definitionname}
\theoremstyle{definition}
\newtheorem{example}[thm]{\protect\examplename}
\theoremstyle{plain}
\newtheorem{lem}[thm]{\protect\lemmaname}
\theoremstyle{plain}
\newtheorem{claim}[thm]{\protect\claimname}
\theoremstyle{definition}
\newtheorem{notation}[thm]{\protect\notationname}
\theoremstyle{plain}
\newtheorem{fact}[thm]{\protect\factname}
\theoremstyle{plain}
\newtheorem{cor}[thm]{\protect\corollaryname}
\newcommand{\xyR}[1]{
  \xydef@\xymatrixrowsep@{#1}}
\newcommand{\xyC}[1]{
  \xydef@\xymatrixcolsep@{#1}}
\providecommand{\claimname}{Claim}
\providecommand{\corollaryname}{Corollary}
\providecommand{\definitionname}{Definition}
\providecommand{\examplename}{Example}
\providecommand{\factname}{Fact}
\providecommand{\lemmaname}{Lemma}
\providecommand{\notationname}{Notation}
\providecommand{\remarkname}{Remark}
\providecommand{\theoremname}{Theorem}
\begin{document}
\global\long\def\call{\mathcal{L}}%
\global\long\def\nn{\mathcal{N}}%
\global\long\def\ff{\mathcal{F}}%
\global\long\def\aa{\mathcal{A}}%
\global\long\def\RR{\mathbb{R}}%
\global\long\def\EE{\mathbb{E}}%
\global\long\def\CC{\mathbb{C}}%
\global\long\def\QQ{\mathbb{Q}}%
\global\long\def\ZZ{\mathbb{Z}}%
\global\long\def\NN{\mathbb{N}}%
\global\long\def\KK{\mathbb{K}}%
\global\long\def\SL{\mathrm{SL}}%
\global\long\def\GL{\mathrm{GL}}%
\global\long\def\ds{\mathrm{ds}}%
\global\long\def\dnu{\mathrm{d\nu}}%
\global\long\def\dmu{\mathrm{d\mu}}%
\global\long\def\dt{\mathrm{dt}}%
\global\long\def\dw{\mathrm{dw}}%
\global\long\def\dx{\mathrm{dx}}%
\global\long\def\dy{\mathrm{dy}}%
\global\long\def\norm#1{\left\Vert #1\right\Vert }%
\global\long\def\limfi#1{{\displaystyle \lim_{#1\to\infty}}}%
\global\long\def\arrfi#1{\overset{#1\to\infty}{\longrightarrow}}%
\global\long\def\flr#1{\left\lfloor #1\right\rfloor }%
\global\long\def\lcm{\mathrm{lcm}}%
\global\long\def\fbf{\left(f\bar{f}\right)}%

\title{\textbf{\huge{}The conservative matrix field}}
\author{Ofir David\\
The Ramanujan Machine team, Faculty of Electrical and Computer Engineering,\\
Technion - Israel Institute of Technology, Haifa 3200003, Israel}
\maketitle
\begin{abstract}
We present a new structure called the \textquotedbl conservative
matrix field,\textquotedbl{} initially developed to elucidate and
provide insight into the methodologies employed by Ap\'ery in his
proof of the irrationality of $\zeta\left(3\right)$. This framework
is also applicable to other well known mathematical constants, including
$e,\;\pi,\;\ln\left(2\right)$, and more, and can be used to study
their properties. Moreover, the conservative matrix field exhibits
inherent connections to various ideas and techniques in number theory,
thereby indicating promising avenues for further applications and
investigations.
\end{abstract}

\section{Introduction}

The Riemann zeta function $\zeta\left(s\right)$ is a complex valued
function that plays a crucial role in mathematics. It is defined as
$\zeta\left(s\right)=\sum_{n=1}^{\infty}\frac{1}{n^{s}}$ for complex
numbers $s$ with $Re\left(s\right)>1$ and can be extended analytically
to all of $\CC$ with a simple pole at $s=1$. In particular, the
values $\zeta\left(d\right)$ for integers $d\geq2$ have significant
implications in a number of areas, e.g. $\frac{1}{\zeta\left(d\right)}$
is the probability of choosing a random integer which is not divisible
by $m^{d}$ for some integer $m\geq1$. While the even evaluations
$\zeta\left(2d\right)$ are well understood, with $\zeta\left(2\right)=\frac{\pi^{2}}{6}$
and more generally $\frac{\zeta\left(2d\right)}{\pi^{2d}}$ are rational
numbers, the behavior of odd evaluations $\zeta\left(2d+1\right)$
remains largely unknown.

One of the main results about these odd evaluations was in 1978 where
Ap\'ery showed that $\zeta\left(3\right)$ is irrational \cite{apery_irrationalite_1979}.
As Ap\'ery's proof was complicated, subsequent attempts were made
to explain and simplify it, see for example van der Poorten \cite{van_der_poorten_proof_1979},
and others tried to reprove it all together, e.g. Beukers in \cite{beukers_note_2013}.
Further research \cite{rivoal_fonction_2000,zudilin_one_2001} has
also shown that there are infinitely many odd integers for which $\zeta\left(2d+1\right)$
is irrational, and in particular at least one of $\zeta\left(5\right),\zeta\left(7\right),\zeta\left(9\right)$,
and $\zeta\left(11\right)$ is irrational.

One of the main goals of this paper is to provide some motivation
to the steps used in Ap\'ery's proof through the creation of a novel
mathematical structure referred to as the \textbf{conservative matrix
field}. This structure will allow us to understand Ap\'ery's original
proof and provide a framework for studying other natural constants
such as $\zeta\left(2\right),\pi$, and $e$, with the potential to
uncover new relationships and properties among them. Moreover, while
we will mainly work over the integers, this structure seems to have
natural generalizations for general metric fields.

\medskip{}

The conservative matrix field structure is based on \textbf{generalized
continued fractions}, which are number presentations of the form
\[
\KK_{1}^{\infty}\frac{b_{k}}{a_{k}}:=\frac{b_{1}}{a_{1}+\frac{b_{2}}{a_{2}+\frac{b_{3}}{\overset{a_{3}+}{\phantom{a}}\ddots}}}=\limfi n\frac{b_{1}}{a_{1}+\frac{b_{2}}{a_{2}+\frac{b_{3}}{\ddots+\frac{b_{n}}{a_{n}+0}}}}\qquad a_{i},b_{i}\in\CC,
\]
where the numbers in the sequence on the right for which we look for
the limit are called the \textbf{convergents} of that expansion.

Their much more well known cousins, the \textbf{simple continued fractions}
where $b_{k}=1$ and $a_{k}\geq1$ are integers, have been studied
extensively and are connected to many research areas in mathematics
and in general. In particular, the original goal of these continued
fractions was to find the ``best'' rational approximations for a
given irrational number, which are given by the convergents defined
above.

While irrational numbers have a unique simple continued fraction expansion,
and rationals have two expansions, there can be many presentations
in the generalized version (more details in \secref{Generalized-continued-fractions}).
The uniqueness in the simple continued fraction expansion allows us
to extract a lot of information from the expansion, and while we lose
this property in the generalized form, what we gain is the option
to find ``nice'' generalized continued fractions which are easier
to work with. In particular, we are interested in \textbf{polynomial
continued fractions} where $a_{k}=a\left(k\right),\;b_{k}=b\left(k\right)$
with $a,b\in\ZZ\left[x\right]$.

For example, in the $\zeta\left(3\right)$ case, the simple continued
fraction is 
\[
\zeta\left(3\right)=[1;4,1,18,1,1,1,4,1,9,...]=1+\frac{1}{4+\frac{1}{1+\frac{1}{18+\frac{1}{1+\ddots}}}},
\]
where the coefficients $1,4,1,18,1,...$ don't seem to have any usable
pattern. However, it has a much simpler generalized continued fraction
form
\[
\zeta\left(3\right)=\frac{1}{1+\KK_{1}^{\infty}\frac{-i^{6}}{i^{3}+\left(1+i\right)^{3}}}=\frac{1}{1-\frac{1^{6}}{1^{3}+2^{3}-\frac{2^{6}}{2^{3}+3^{3}-\frac{3^{6}}{3^{3}+4^{3}-{\scriptscriptstyle \ddots}}}}},
\]
where the convergents in this expansion are the standard approximations
$\sum_{1}^{n}\frac{1}{k^{3}}$ for $\zeta\left(3\right)$. Moreover,
this abundance of presentations allows us to find many presentations
for $\zeta\left(3\right)$ which can be combined together to find
a ``good enough'' presentation where the convergents converge fast
enough to prove that $\zeta\left(3\right)$ is irrational. In particular,
in Ap\'ery's original proof, and in ours, we eventually show that
\[
\zeta\left(3\right)=\frac{6}{5+\KK_{1}^{\infty}\frac{-k^{6}}{17\left(k^{3}+\left(1+k\right)^{3}\right)-12\left(k+\left(1+k\right)\right)}}.
\]
\smallskip{}

The irrationality proof uses a very elementary argument (see \secref{Irrationality-testing})
that shows that if $\frac{p_{n}}{q_{n}}\to L$ where $\frac{p_{n}}{q_{n}}$
are reduced rational numbers with $\left|q_{n}\right|\to\infty$,
and $\left|L-\frac{p_{n}}{q_{n}}\right|=o\left(\frac{1}{\left|q_{n}\right|}\right)$,
then $L$ must be irrational. Moreover, we can measure how irrational
$L$ is by looking for $\delta>0$ such that $\left|L-\frac{p_{n}}{q_{n}}\right|\sim\frac{1}{\left|q_{n}\right|^{1+\delta}}$.
The main object of this study, the \textbf{conservative matrix field}
defined in \secref{Definition-properties-CMF},\textbf{ }is an algebraic
object that collects infinitely many related such approximations $\frac{p_{n,m}}{q_{n,m}}$
arranged on the integer lattice in the positive quadrant. Computing
$\delta_{n,m}$ for each approximation in the $\zeta\left(3\right)$
matrix field, namely $\delta_{n,m}=-1-\frac{\ln\left|L-\frac{p_{n,m}}{q_{n,m}}\right|}{\ln\left|q_{n,m}\right|}$
where the rational $\frac{p_{n,m}}{q_{n,m}}$ is reduced, and plotting
them as a heat map we get the following

\begin{figure}[H]
\begin{centering}
\includegraphics[scale=0.25]{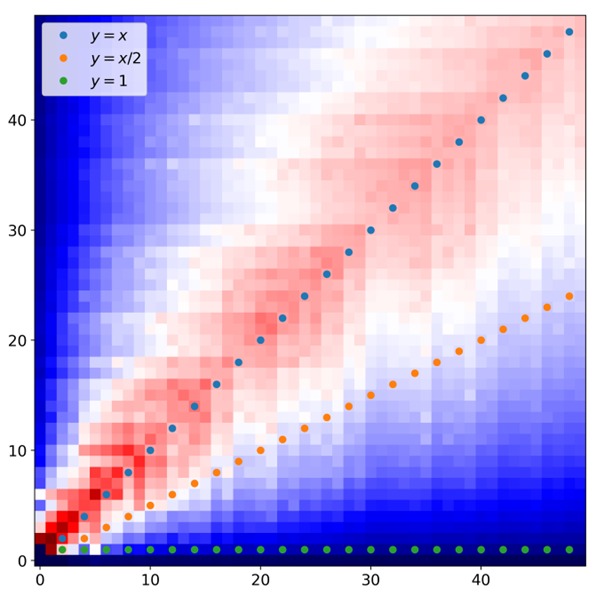}
\par\end{centering}
\caption{(Figure by Rotem Elimelech) The gradient color from red$\to$white$\to$blue
correspond to $\delta_{n,m}=-1-\log_{q_{n,m}}\left|L-\frac{p_{n,m}}{q_{n,m}}\right|$
going from positive$\to$zero$\to$negative.}
\end{figure}

As we shall see, the $X$-axis and $Y$-axis correspond more or less
to the standard approximations of $\zeta\left(3\right)$, namely $\sum_{1}^{n}\frac{1}{k^{3}}$,
which do not converge fast enough to show irrationality, while on
the diagonal we get the expansion mentioned above used by Ap\'ery
to prove the irrationality.

\medskip{}

The conservative matrix field structure is not only a way to understand
Ap\'ery's original proof, but seems to have a much broader range
of applications. There are many places that study generalized continued
fractions and in particular polynomial continued fractions (see for
example \cite{apostol_introduction_1998,jones_continued_1980,pincherle_delle_1894,laughlin_real_2004})
. This paper originated in the Ramanujan machine project \cite{raayoni_generating_2021}
which aimed to find polynomial continued fraction presentations to
interesting mathematical constants using computer automation. With
the goal of trying to prove many of the conjectures discovered by
the computer, and along the way understand Ap\'ery's proof, this
conservative matrix field structure was found. These computer conjectures
suggest that there is still much to be explored in this field and
that this new structure is just a step towards a deeper understanding
of mathematical constants and their relations.

\subsection{Structure of the paper}

We present an overview of the paper's structure to delineate the development
of the conservative matrix field. The construction of the conservative
matrix field itself and its properties begins in \secref{Definition-properties-CMF},
while the previous sections are more of an introduction to the subject
of polynomial continued fractions.

\textbf{Irrationality Testing (}\secref{Irrationality-testing}):
We begin by exploring the concept of good rational approximations
and their role in irrationality testing. These approximations are
closely linked to simple continued fractions, but also still close
to other continued fraction versions.

\textbf{The polynomial continued fractions (}\secref{Generalized-continued-fractions}):
We provide definitions for various types of continued fractions, with
a specific focus on polynomial continued fractions, and their role
in irrationality testing. In particular, we recall in \thmref{recurrence-roots}
the Euler continued fractions from \cite{david_euler_2023} which
serve as the fundamental building blocks of the conservative matrix
field, where by combining infinitely many such continued fractions,
we generate new rational approximations.

\textbf{The conservative matrix field }(\secref{Definition-properties-CMF}):
After defining the conservative matrix field, we present an intriguing
matrix field construction in \subsecref{matrix-field-construction}
with several interesting properties, where in particular each such
matrix field is accompanied by a dual matrix field, elaborated upon
in \subsecref{The-dual-matrix-field}.

\textbf{The $\zeta\left(3\right)$ matrix field} (\secref{The-z3-case}):
In this section, we explore $\zeta\left(3\right)$ matrix field's
structure, its intriguing properties, and its connections to the irrationality
proof of $\zeta\left(3\right)$. The natural questions arising from
this study often parallel ideas presented in Ap\'ery's proof.

\textbf{Future Directions and Connections} (\secref{On-future-fractions}):
As a conclusion, we propose potential avenues for further development
of the conservative matrix field, and its potential connections to
other fields within number theory, hinting at exciting possibilities
for future research.

\newpage{}

\section{\label{sec:Irrationality-testing}Rational Approximations and Irrationality
Testing}

The simple continued fractions were first introduced in order to find
good rational approximations for a given number, and in a sense also
the best rational approximations. Interestingly enough, the numbers
which have ``very good'' rational approximations are exactly the
irrational numbers. This idea of proving irrationality through good
rational approximations is quite general and will be one of the main
applications to polynomial continued fractions, and the conservative
matrix field which we introduce in this paper. As such, we begin with
this irrationality test.

\medskip{}

Any real number $L$ can be approximated by rational numbers. More
over, for any denominator $q\in\NN$ there is a numerator $p\in\ZZ$
such that $\left|L-\frac{p}{q}\right|\leq\frac{1}{q}$. Looking for
approximations where the error is much smaller than $\frac{1}{q}$
is the starting point of the study of Diophantine approximations.
One of the basic results in this field is Dirichlet's theorem, which
states that if we can choose the denominator intelligently, then there
are approximations with much smaller error.
\begin{thm}[\textbf{Dirichlet's theorem for Diophantine approximation}]
\label{thm:Dirichlet}Given any number $L\in\RR$, there are infinitely
many $p_{n},q_{n}\in\ZZ$ with $q_{n}\to\infty$ such that 
\[
\left|L-\frac{p_{n}}{q_{n}}\right|\leq\frac{1}{q_{n}^{2}}.
\]
\end{thm}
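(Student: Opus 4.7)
The plan is to prove Dirichlet's theorem by the classical pigeonhole argument on fractional parts of multiples of $L$, and then upgrade a single approximation to an infinite sequence with denominators tending to infinity. The pigeonhole step is clean and produces one good approximation per choice of a parameter $N$; a short iteration then yields infinitely many distinct pairs.

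For the main step, I would fix an integer $N\geq 1$ and consider the $N+1$ fractional parts $\{kL\}$ for $k=0,1,\ldots,N$, writing $\{x\}:=x-\lfloor x\rfloor$. These all lie in $[0,1)$, and partitioning this interval into the $N$ equal subintervals $[j/N,(j+1)/N)$ forces two of them, say $\{k_{1}L\}$ and $\{k_{2}L\}$ with $0\leq k_{1}<k_{2}\leq N$, to share a subinterval. Taking $q=k_{2}-k_{1}$ and $p=\lfloor k_{2}L\rfloor-\lfloor k_{1}L\rfloor$ gives $1\leq q\leq N$ together with $|qL-p|=|\{k_{2}L\}-\{k_{1}L\}|<1/N$, and dividing by $q$ yields $\left|L-\tfrac{p}{q}\right|<\tfrac{1}{qN}\leq\tfrac{1}{q^{2}}$, which is the required estimate for this choice of $N$.

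To produce infinitely many such pairs with $q_{n}\to\infty$, I would split into cases. If $L=a/b\in\QQ$ then the trivial choices $(p_{n},q_{n})=(na,nb)$ give error zero for every $n$, so there is nothing to do. If $L$ is irrational then $qL-p\neq 0$ for every integer pair with $q\neq 0$, so given finitely many previously constructed approximations $(p_{1},q_{1}),\ldots,(p_{m},q_{m})$, I can choose $N$ larger than $1/\min_{i}|q_{i}L-p_{i}|$ and reapply the pigeonhole construction; the resulting pair $(p,q)$ satisfies $|qL-p|<1/N<|q_{i}L-p_{i}|$ for every $i$ and so cannot coincide with any earlier pair. Iterating produces an infinite list of pairwise distinct approximations, and since any fixed bound on the denominators would force the $(p_{n},q_{n})$ to live in a finite set, the denominators must be unbounded, and a subsequence yields $q_{n}\to\infty$.

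The argument is elementary throughout. The only delicate point is ensuring that the inductive reapplication of pigeonhole yields genuinely new pairs with growing denominators, and this is handled cleanly by the non-vanishing of $qL-p$ in the irrational case, so I do not expect any real obstacle.
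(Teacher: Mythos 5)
The paper does not give a proof of this theorem; it is stated as a classical result, with a remark that convergents of simple continued fractions furnish such approximations and a pointer to the literature (\cite{einsiedler_ergodic_2013}). Your pigeonhole proof is the other canonical route, and it is correct in every step: the $N+1$ fractional parts $\{kL\}$, $k=0,\dots,N$, forced into $N$ bins of width $1/N$; the identity $qL-p=\{k_2L\}-\{k_1L\}$ with $q=k_2-k_1\le N$; and the resulting bound $|L-p/q|<1/(qN)\le 1/q^2$. Your upgrade to infinitely many pairs with $q_n\to\infty$ is also sound: the rational case is trivially handled by $(na,nb)$, and in the irrational case $qL-p\ne 0$ lets you force $N$ past $1/\min_i|q_iL-p_i|$ to obtain a genuinely new pair each time, while boundedness of the denominators would confine all pairs to a finite set (for each fixed $q$ the window $|L-p/q|\le 1/q^2$ admits at most three integers $p$), which contradicts having infinitely many distinct pairs. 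Compared with the continued-fraction route the paper alludes to, your argument is more self-contained and elementary (no need to develop the recurrence theory of convergents), while the convergent-based proof has the advantage of producing the approximations constructively and ordering them with strictly increasing denominators without a subsequence extraction. Both are standard and both are fine.
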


\begin{rem}
One of the important properties of simple continued fractions, is
that they can generate such approximations as in Dirichlet's theorem.
\end{rem}

Dirichlet's theorem as written above is trivial when $L=\frac{p}{q}$
is rational by simply taking $\left(p_{n},q_{n}\right)=\left(np,nq\right)$.
However if we also require that the $\frac{p_{n}}{q_{n}}$ are distinct,
then this theorem is no longer trivial, or even true. In this case,
once $L=\frac{p}{q}\neq\frac{p_{n}}{q_{n}}$ we get that 
\[
\left|L-\frac{p_{n}}{q_{n}}\right|=\left|\frac{p\cdot q_{n}-q\cdot p_{n}}{q\cdot q_{n}}\right|\geq\frac{1}{\left|q\cdot q_{n}\right|},
\]
where the emphasize is that $q$ is constant, so that $\left|q_{n}L-p_{n}\right|\geq\frac{1}{q}$
is bounded from below. When $L$ is irrational, it is easy to see
that the approximations $\frac{p_{n}}{q_{n}}$ cannot become constant
at any point, leading us to:

\noindent\doublebox{\begin{minipage}[t]{1\columnwidth - 2\fboxsep - 7.5\fboxrule - 1pt}%
\begin{align*}
L\in\QQ\; & \Rightarrow\quad\text{There is }C>0\text{ such that }\left|q_{n}L-p_{n}\right|\geq C\text{ for any }\frac{p_{n}}{q_{n}}\neq L\\
L\notin\QQ\; & \Rightarrow\quad\text{There are }\frac{p_{n}}{q_{n}}\in\QQ\text{ distinct such that }\left|q_{n}L-p_{n}\right|\to0.
\end{align*}
\end{minipage}}

In general, when looking for rational approximations with $p_{n},q_{n}$
as above, they are not going to be coprime. Letting $\tilde{q}_{n}=\frac{q_{n}}{\gcd\left(p_{n},q_{n}\right)},\;\tilde{p}_{n}=\frac{p_{n}}{\gcd\left(p_{n},q_{n}\right)}$
, we see that 
\[
\gcd\left(q_{n},p_{n}\right)\left|\tilde{q}_{n}L-\tilde{p}_{n}\right|=\left|q_{n}L-p_{n}\right|,
\]
so it is enough that $\left|q_{n}L-p_{n}\right|=o\left(\gcd\left(p_{n},q_{n}\right)\right)$
in order to get the condition $\left|\tilde{q}_{n}L-\tilde{p}_{n}\right|\to0$
above to hold. Since $\frac{\tilde{p}_{n}}{\tilde{q}_{n}}=\frac{p_{n}}{q_{n}}$,
we obtain the following irrationality test:
\begin{thm}
\label{thm:improved-rationality-test}Suppose that $p_{n},q_{n}\in\ZZ$
where $\frac{p_{n}}{q_{n}}$ is not eventually constant. If $\left|q_{n}L-p_{n}\right|=o\left(\gcd\left(p_{n},q_{n}\right)\right)$,
then $L$ is irrational.
\end{thm}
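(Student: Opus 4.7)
The plan is to reduce this theorem to the boxed dichotomy stated immediately before it by passing to the reduced form of the fraction $\tfrac{p_n}{q_n}$, and then argue by contradiction assuming $L$ is rational.

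First, I would set $d_n = \gcd(p_n, q_n)$, $\tilde{p}_n = p_n / d_n$ and $\tilde{q}_n = q_n / d_n$, so that $\tfrac{\tilde{p}_n}{\tilde{q}_n} = \tfrac{p_n}{q_n}$ is a fraction in lowest terms. Dividing the identity $\gcd(p_n, q_n) \cdot |\tilde{q}_n L - \tilde{p}_n| = |q_n L - p_n|$ (already recorded in the text) by $d_n$ and using the hypothesis $|q_n L - p_n| = o(d_n)$ gives $|\tilde{q}_n L - \tilde{p}_n| \to 0$. This is the cleaner statement we really want to work with.

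Next, I would suppose for contradiction that $L = \tfrac{p}{q} \in \QQ$. The hypothesis that $\tfrac{p_n}{q_n}$ is not eventually constant means in particular that it is not eventually equal to $\tfrac{p}{q}$, so we can extract an infinite subsequence (which I will still index by $n$) along which $\tfrac{\tilde{p}_n}{\tilde{q}_n} \neq \tfrac{p}{q}$. For each such $n$, the integer $p \tilde{q}_n - q \tilde{p}_n$ is nonzero, hence of absolute value at least one, so
\[
|\tilde{q}_n L - \tilde{p}_n| = \frac{|p \tilde{q}_n - q \tilde{p}_n|}{q} \geq \frac{1}{q}.
\]
This contradicts $|\tilde{q}_n L - \tilde{p}_n| \to 0$, so $L$ must be irrational.

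The only mildly subtle point, and the one I would be careful about, is the extraction of the subsequence: the assumption is the negation of ``eventually constant,'' not of ``taking the value $\tfrac{p}{q}$ infinitely often.'' Strictly speaking what we need is that $\tfrac{p_n}{q_n} \neq \tfrac{p}{q}$ for infinitely many $n$. This follows because if $\tfrac{p_n}{q_n} = \tfrac{p}{q}$ for all but finitely many $n$ then the sequence would be eventually constant, contradicting the hypothesis. Once this is observed, the rest is the two-line estimate above, and there is no real obstacle beyond it.
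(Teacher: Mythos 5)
Your proof is correct and follows essentially the same route as the paper: pass to the reduced fraction $\tilde{p}_n/\tilde{q}_n$ via the identity $\gcd(p_n,q_n)\,|\tilde{q}_n L - \tilde{p}_n| = |q_n L - p_n|$, deduce $|\tilde{q}_n L - \tilde{p}_n| \to 0$, and contradict the lower bound $\geq 1/|q|$ that holds whenever $L = p/q \in \QQ$ and the approximant differs from $L$, with the not-eventually-constant hypothesis supplying infinitely many such indices. The only cosmetic quibble is that the lower bound should read $1/|q|$ rather than $1/q$ in case $q < 0$.
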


The importance of rational approximations derived from polynomial
continued fractions, as we shall see in \claimref{upper-bound}, is
that it gives us an upper bound on the error on the one hand, and
an easy way to check that $\frac{p_{n}}{q_{n}}$ is not eventually
constant on the other hand.

\medskip{}

\subsection*{The $\zeta\left(3\right)$ case:}

As an example to the irrationality testing, and how to improve the
approximations, let us consider $\zeta\left(3\right)$ which Ap\'ery
has shown to be irrational. We start with its standard rational approximations
\[
\frac{p_{n}}{q_{n}}:=\sum_{1}^{n}\frac{1}{k^{3}}\overset{n\to\infty}{\longrightarrow}\zeta\left(3\right).
\]

Multiplying the denominators, we can write $q_{n}=\left(n!\right)^{3}$
and $p_{n}=\sum_{1}^{n}\left(\frac{n!}{k}\right)^{3}$, both of which
in $\ZZ$. Trying to apply the irrationality test from the previous
section, we get that 
\[
\left|\zeta\left(3\right)-\frac{p_{n}}{q_{n}}\right|=\sum_{n+1}^{\infty}\frac{1}{k^{3}}=\Theta\left(\frac{1}{n^{2}}\right).
\]
This is, of course, far from what we need to prove irrationality,
since $\frac{1}{n^{2}}$ is much larger than $\frac{1}{q_{n}}=\frac{1}{\left(n!\right)^{3}}$.
Indeed, with denominator $q_{n}=\left(n!\right)^{3}$ we can always
find $p_{n}'$ such that $\left|\zeta\left(3\right)-\frac{p_{n}'}{q_{n}}\right|\leq\frac{1}{\left(n!\right)^{3}}\ll\frac{1}{n^{2}}$.
Hence, while the rational approximations choice above is easy to use
in general, it is not good enough to show irrationality.

One way to improve the approximation, as mentioned before, is by moving
to a reduced form of $\frac{p_{n}}{q_{n}}$. Taking instead the common
denominator in the sum above, we get that $\tilde{q}_{n}=\lcm\left[n\right]^{3}$,
where $\lcm\left[n\right]:=\lcm\left\{ 1,2,...,n\right\} $ and then
$\tilde{p}_{n}=\sum_{1}^{n}\left(\frac{\lcm\left[n\right]}{k}\right)^{3}$.
It is well known that $\ln\left(\lcm\left[n\right]\right)=n+o\left(n\right)$
(it follows from the prime number theorem, see \cite{apostol_introduction_1998}),
so that the new denominator $\tilde{q}_{n}=\lcm\left[n\right]^{3}\sim e^{3n}$
is much smaller than $\left(n!\right)^{3}$ and we basically get a
factorial reduction.

However, the error is still too big $\frac{1}{n^{2}}\gg\frac{1}{e^{3n}}$,
so even with this improvement, it is still not enough. One way to
improve this approximation even further, is to consider $\frac{p_{n}}{q_{n}}=\left(\sum_{1}^{n-1}\frac{1}{k^{3}}\right)+\frac{1}{2n^{2}}$.
While it doesn't change the common denominator too much, the error
becomes smaller, since
\[
\left|\zeta\left(3\right)-\frac{p_{n}}{q_{n}}\right|=\left|\left(\sum_{n}^{\infty}\frac{1}{k^{3}}\right)-\frac{1}{2n^{2}}\right|=\left|\left(\sum_{n}^{\infty}\frac{1}{k^{3}}\right)-\int_{n}^{\infty}\frac{1}{x^{3}}\dx\right|\leq\frac{1}{n^{3}}.
\]

While there is still a lot of room for improvement, there is reason
for optimism. The challenge lies in automating the process indefinitely
for all $r\in\NN$, aiming to reduce the error to $\frac{1}{n^{r}}$
for arbitrarily large $r$, with the hope of somewhere in the limit
getting the error to be small enough for our irrationality test.

The polynomial continued fractions become crucial at this juncture.
They not only furnish rational approximations but also offer iterative
means to refine them repeatedly, where in some cases leading to instances
where these improved approximations are good enough to prove the desired
irrationality. In particular, eventually in this paper, this polynomial
continued fractions will appear as ``horizontal rows'' of the conservative
matrix field. Going up in the rows will improve their convergence
rate, and finally, a diagonal argument will produce the good enough
approximation used to show that $\zeta\left(3\right)$ is irrational.

\newpage{}

\section{\label{sec:Generalized-continued-fractions}The polynomial continued
fraction}

We start with a generalization of the simple continued fractions,
which unsurprisingly, is called generalized continued fractions. These
can be defined over any topological field, though here we focus on
the complex field with its standard Euclidean metric, and more specifically
when the numerators and denominators are integers.
\begin{defn}[\textbf{(Generalized) continued fractions}]
Let $a_{n},b_{n}$ be a sequence of complex numbers. We will write
\[
a_{0}+\KK_{1}^{n}\frac{b_{i}}{a_{i}}:=a_{0}+\cfrac{b_{1}}{a_{1}+\cfrac{b_{2}}{a_{2}+\cfrac{b_{3}}{\ddots+\cfrac{b_{n}}{a_{n}+0}}}}\in\CC\cup\left\{ \infty\right\} ,
\]
and if the limit as $n\to\infty$ exists, we also will write
\[
a_{0}+\KK_{1}^{\infty}\frac{b_{i}}{a_{i}}:=\limfi n\left(a_{0}+\KK_{1}^{n}\frac{b_{i}}{a_{i}}\right).
\]

We call this type of expansion (both finite and infinite) a \textbf{continued
fraction expansion. }In case that $a_{0}+\KK_{1}^{\infty}\frac{b_{i}}{a_{i}}$
exists, we call the finite part $a_{0}+\KK_{1}^{n-1}\frac{b_{i}}{a_{i}}$
the \textbf{$n$-th convergents} for that expansion.

A continued fraction is called \textbf{simple continued fraction},
if $b_{i}=1$ for all $i$, $a_{0}\in\ZZ$ and $1\leq a_{i}\in\ZZ$
are positive integers for $i\geq1$.

A continued fraction is called \textbf{polynomial continued fraction},
if $b_{i}=b\left(i\right),\;a_{i}=a\left(i\right)$ for some polynomials
$a\left(x\right),b\left(x\right)\in\CC\left[x\right]$ and all $i\geq1$.

\medskip{}
\end{defn}

Simple continued fraction expansion is one of the main and basic tools
used in number theory when studying rational approximations, and their
convergents satisfy the inequality in Dirichlet's theorem that we
mentioned before (for more details, see chapter 3 in \cite{einsiedler_ergodic_2013}).
The coefficients $a_{i}$ in that expansion can be found using a generalized
Euclidean division algorithm, and it is well known that a number is
rational if and only if its simple continued fraction expansion is
finite. However, while we have an algorithm to find the (almost) unique
expansion, in general they can be very complicated without any known
patterns, even for ``nice'' numbers, for example:

\[
\pi=[3;7,15,1,292,1,1,1,2,1,3,1,14,2,1,1,2,2,2,2,1,84,...]=3+\cfrac{1}{7+\cfrac{1}{15+\cfrac{1}{1+\ddots}}}.
\]

When moving to generalized continued fractions, even when we assume
that both $a_{i}$ and $b_{i}$ are integers, we lose the uniqueness
property, and the rational if and only if finite property. What we
gain in return are more presentations for each number, where some
of them can be much simpler to use. For example, $\pi$ can be written
as 
\[
\pi=3+\KK_{1}^{\infty}\frac{\left(2n-1\right)^{2}}{6}.
\]
We want to study these presentations, and (hopefully) use them to
show interesting properties, e.g. prove irrationality for certain
numbers.

One such simple and interesting family of continued fractions is the
\textbf{Euler continued fractions} which are based on Euler's conversion
formula from infinite sums to continued fraction. This family was
introduced in \cite{david_euler_2023}, and in many cases our conservative
matrix fields in a sense will combine a parameterized family of such
Euler continued fractions. 
\begin{thm}[\textbf{Euler continued fractions}]
\label{thm:recurrence-roots}Let $h_{1},h_{2},f,a,b:\CC\to\CC$ be
functions such that 
\begin{align*}
b\left(x\right) & =-h_{1}\left(x\right)h_{2}\left(x\right)\\
f\left(x\right)a\left(x\right) & =f\left(x-1\right)h_{1}\left(x\right)+f\left(x+1\right)h_{2}\left(x+1\right).
\end{align*}
Then 
\[
\KK_{1}^{n}\frac{b\left(i\right)}{a\left(i\right)}=\frac{f\left(1\right)h_{2}\left(1\right)}{f\left(0\right)}\left(\frac{1}{\sum_{k=0}^{n}\frac{f\left(0\right)f\left(1\right)}{f\left(k\right)f\left(k+1\right)}\prod_{i=1}^{k}\left(\frac{h_{1}\left(i\right)}{h_{2}\left(i+1\right)}\right)}-1\right).
\]
\end{thm}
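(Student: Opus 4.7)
The plan is to compute the numerator $P_n$ and denominator $Q_n$ of the $n$-th convergent of $\KK_1^n \frac{b(i)}{a(i)}$ in closed form, take the ratio, and recognize the right-hand side. Recall that these convergents satisfy the standard three-term Wallis recurrence $X_n = a(n) X_{n-1} + b(n) X_{n-2}$ with initial data $P_{-1}=1, P_0=0, Q_{-1}=0, Q_0=1$, so the whole theorem will follow if closed forms that match both the initial conditions and this recurrence can be exhibited.

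Writing $S_n := \sum_{k=0}^n c_k$ with $c_k := \frac{f(0) f(1)}{f(k) f(k+1)} \prod_{i=1}^k \frac{h_1(i)}{h_2(i+1)}$, I would guess (from examining the first two or three convergents and matching terms) the closed forms
\begin{align*}
Q_n &= \frac{f(n+1) S_n}{f(1) h_2(1)} \prod_{i=1}^{n+1} h_2(i), \\
P_n &= \frac{(1 - S_n) f(n+1)}{f(0)} \prod_{i=1}^{n+1} h_2(i).
\end{align*}
With the empty-sum convention $S_{-1} = 0$, one checks at once that these satisfy the four initial conditions, and their ratio is
$$\frac{P_n}{Q_n} = \frac{f(1) h_2(1)}{f(0)}\left(\frac{1}{S_n} - 1\right),$$
which is exactly the desired right-hand side. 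So the entire theorem reduces to verifying the three-term recurrence for these formulas.

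For $Q_n$, after substituting and using the hypothesis $b(n+1) = -h_1(n+1) h_2(n+1)$, cancelling the common factor $\prod_{i=1}^{n} h_2(i)/(f(1) h_2(1))$, and dividing through by $h_2(n+1)$, the recurrence collapses to the single identity
$$h_2(n+2) f(n+2) S_{n+1} = a(n+1) f(n+1) S_n - h_1(n+1) f(n) S_{n-1}.$$
Writing $S_{n+1} = S_n + c_{n+1}$ and $S_{n-1} = S_n - c_n$, the coefficients of $S_n$ on the two sides agree precisely by the $f$-recurrence hypothesis $f(n+1) a(n+1) = f(n) h_1(n+1) + f(n+2) h_2(n+2)$, and the remaining identity $h_2(n+2) f(n+2) c_{n+1} = h_1(n+1) f(n) c_n$ is immediate from the ratio $c_{n+1}/c_n = f(n) h_1(n+1)/(f(n+2) h_2(n+2))$ that the definition of $c_k$ forces. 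The computation for $P_n$ is entirely analogous. The main obstacle is guessing the closed forms for $P_n$ and $Q_n$; once they are in hand, both non-trivial hypotheses ($b = -h_1 h_2$ and the $f$-recurrence) are used exactly once and the proof is routine bookkeeping.
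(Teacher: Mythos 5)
The paper does not actually prove this theorem; it recalls it from the reference \cite{david_euler_2023} and uses it as a black box. So there is no ``paper's own proof'' against which to compare. That said, your argument is correct: the guessed closed forms for $P_n$ and $Q_n$ satisfy the Wallis initial conditions $(P_{-1},P_0,Q_{-1},Q_0)=(1,0,0,1)$, their ratio is the stated right-hand side, and the three-term recurrence $X_{n+1}=a(n+1)X_n+b(n+1)X_{n-1}$ reduces, after dividing out the common factor $\prod_{i=1}^{n+1}h_2(i)$, to exactly the two hypotheses $b=-h_1h_2$ and $f(x)a(x)=f(x-1)h_1(x)+f(x+1)h_2(x+1)$ together with the telescoping ratio $c_{n+1}/c_n=f(n)h_1(n+1)/(f(n+2)h_2(n+2))$. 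One small presentational caveat: the formulas you give for $P_n$, $Q_n$ involve divisions by $f(0)$, $f(1)$, $h_2(1)$; these singularities are removable since the product $\prod h_2(i)$ and the $c_k$ cancel them, but a fully careful writeup would either clear denominators or note that the identity is being proved generically and then extended by continuity/polynomial identity, to avoid the appearance of dividing by a quantity the hypotheses do not assume nonzero.
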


\begin{example}
\label{exa:Euler_zeta_3_example}The main example for these Euler continued fractions that we will
use, is to express values of the Riemann zeta function. Let $h_{1}\left(x\right)=h_{2}\left(x\right)=x^{d}$
for some $d\geq2$ and take $f\equiv1$. Then $\prod_{i=1}^{k}\left(\frac{h_{1}\left(i\right)}{h_{2}\left(i+1\right)}\right)=\prod_{i=1}^{k}\frac{i^{d}}{\left(i+1\right)^{d}}=\frac{1}{\left(k+1\right)^{d}}$,
so that
\[
\KK_{1}^{\infty}\frac{b\left(i\right)}{a\left(i\right)}=\frac{1}{\sum_{k=0}^{\infty}\frac{1}{\left(k+1\right)^{d}}}-1=\frac{1}{\zeta\left(d\right)}-1.
\]
Note also that since $a\left(0\right)=0^{d}+1^{d}=1$, we get that
\[
\cfrac{1}{a\left(0\right)+\cfrac{b\left(1\right)}{a\left(1\right)+\cfrac{b\left(2\right)}{a\left(2\right)+\ddots}}}=\frac{1}{1+\KK_{1}^{\infty}\frac{b\left(i\right)}{a\left(i\right)}}=\zeta\left(d\right).
\]
\end{example}

\bigskip{}

Moving on, one of the main tools used to study continued fractions
are \textbf{Mobius transformations}. Recall that given a $2\times2$
invertible matrix $M=\left(\begin{smallmatrix}a & b\\
c & d
\end{smallmatrix}\right)\in\GL_{2}\left(\CC\right)$ and a $z\in\CC$, the Mobius action is defined by
\[
M\left(z\right)=\frac{az+b}{cz+d}.
\]

In other words, we apply the standard matrix multiplication $\left(\begin{smallmatrix}a & b\\
c & d
\end{smallmatrix}\right)\left(\begin{smallmatrix}z\\
1
\end{smallmatrix}\right)=\left(\begin{smallmatrix}az+b\\
cz+d
\end{smallmatrix}\right)$ and project it onto $P^{1}\CC$ by dividing the $x$-coordinate by
the $y$-coordinate.

By this definition, it is easy to see that 
\[
\KK_{1}^{n}\frac{b_{i}}{a_{i}}=\frac{b_{1}}{a_{1}+\frac{b_{2}}{a_{2}+\frac{\ddots}{\frac{b_{n}}{a_{n}+0}}}}=\left(\begin{smallmatrix}0 & b_{1}\\
1 & a_{1}
\end{smallmatrix}\right)\left(\begin{smallmatrix}0 & b_{2}\\
1 & a_{2}
\end{smallmatrix}\right)\cdots\left(\begin{smallmatrix}0 & b_{n}\\
1 & a_{n}
\end{smallmatrix}\right)\left(0\right)
\]

This Mobius presentation allows us to show an interesting recurrence
relation on the numerators and denominators of the convergents, which
generalizes the well known recurrence on simple continued fractions.
\begin{lem}
\label{lem:gcf-recursion}Let $a_{n},b_{n}$ be a sequence of integers.
Define $M_{n}=\left(\begin{smallmatrix}0 & b_{n}\\
1 & a_{n}
\end{smallmatrix}\right)$ and set $\left(\begin{smallmatrix}p_{n}\\
q_{n}
\end{smallmatrix}\right)=\left(\prod_{1}^{n-1}M_{i}\right)\left(\begin{smallmatrix}0\\
1
\end{smallmatrix}\right)$. Then $\frac{p_{n}}{q_{n}}=\prod_{1}^{n-1}M_{i}\left(0\right)=\KK_{1}^{n-1}\frac{b_{i}}{a_{i}}$
are the convergents of the generalized continued fraction presentation.
More over, we have that $\left(\begin{smallmatrix}p_{n-1} & p_{n}\\
q_{n-1} & q_{n}
\end{smallmatrix}\right)=\prod_{1}^{n-1}M_{i}$, implying the same recurrence relation on $p_{n}$ and $q_{n}$ given
by 
\begin{align*}
p_{n+1} & =a_{n}p_{n}+b_{n}p_{n-1}\\
q_{n+1} & =a_{n}q_{n}+b_{n}q_{n-1},
\end{align*}
with starting condition $p_{0}=1,\;p_{1}=0$ and $q_{0}=0,\;q_{1}=1$.
\end{lem}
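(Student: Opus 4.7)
The plan is to prove all three assertions in one shot by establishing the matrix identity $\left(\begin{smallmatrix}p_{n-1} & p_{n}\\ q_{n-1} & q_{n}\end{smallmatrix}\right)=\prod_{1}^{n-1}M_{i}$ via induction on $n$; from this identity the recurrence and the convergent formula will drop out as immediate corollaries. The central algebraic observation driving the induction is that right-multiplying any $2\times 2$ matrix $\left(\begin{smallmatrix}u & v\\ u' & v'\end{smallmatrix}\right)$ by $M_{n}=\left(\begin{smallmatrix}0 & b_{n}\\ 1 & a_{n}\end{smallmatrix}\right)$ produces $\left(\begin{smallmatrix}v & a_{n}v+b_{n}u\\ v' & a_{n}v'+b_{n}u'\end{smallmatrix}\right)$, i.e.\ it shifts the second column to the first while creating a new second column built from a linear combination of both old columns with coefficients $a_{n}$ and $b_{n}$. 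This is exactly the shape of the desired recursion.

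First I would check the base case $n=1$: the empty product equals the identity, so the matrix identity reads $\left(\begin{smallmatrix}p_{0} & p_{1}\\ q_{0} & q_{1}\end{smallmatrix}\right)=I$, matching the prescribed initial values $p_{0}=1,\;q_{0}=0,\;p_{1}=0,\;q_{1}=1$. For the inductive step, assume $\left(\begin{smallmatrix}p_{n-1} & p_{n}\\ q_{n-1} & q_{n}\end{smallmatrix}\right)=\prod_{1}^{n-1}M_{i}$ and multiply both sides on the right by $M_{n}$. By the algebraic observation above, the left side becomes $\left(\begin{smallmatrix}p_{n} & a_{n}p_{n}+b_{n}p_{n-1}\\ q_{n} & a_{n}q_{n}+b_{n}q_{n-1}\end{smallmatrix}\right)$. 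If we \emph{define} $p_{n+1}$ and $q_{n+1}$ by the claimed recurrences, this is precisely $\left(\begin{smallmatrix}p_{n} & p_{n+1}\\ q_{n} & q_{n+1}\end{smallmatrix}\right)$, closing the induction and simultaneously establishing the recurrence statement.

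To extract the convergent statement from the matrix identity, apply both sides to the column vector $\left(\begin{smallmatrix}0\\ 1\end{smallmatrix}\right)$: the right-hand side by definition yields $\left(\begin{smallmatrix}p_{n}\\ q_{n}\end{smallmatrix}\right)$, while the left-hand side represents, after projecting onto $P^{1}\CC$, the Mobius action $\prod_{1}^{n-1}M_{i}(0)$, which by the displayed identity preceding the lemma equals $\KK_{1}^{n-1}\frac{b_{i}}{a_{i}}$. Comparing the two sides gives $\frac{p_{n}}{q_{n}}=\KK_{1}^{n-1}\frac{b_{i}}{a_{i}}$ as required.

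I do not anticipate a real obstacle: the whole argument is bookkeeping once the right column interpretation of the product is chosen, and the only conceptual subtlety is making sure the initial conditions $p_{0}=1,q_{0}=0$ are consistent with the empty-product convention (which they are, corresponding to the identity matrix). The only place where care is needed is matching indices, since the lemma indexes convergents so that $\frac{p_{n}}{q_{n}}$ is the $n$-th convergent, hence uses $\prod_{1}^{n-1}M_{i}$ rather than $\prod_{1}^{n}M_{i}$; verifying this shift against the base case is enough to rule out an off-by-one error.
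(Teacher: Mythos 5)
Your proof is correct. The paper itself leaves this lemma as an exercise, so there is no author proof to compare against, but the induction on the matrix identity $\left(\begin{smallmatrix}p_{n-1} & p_{n}\\ q_{n-1} & q_{n}\end{smallmatrix}\right)=\prod_{1}^{n-1}M_{i}$ with the empty-product convention for the base case is the standard route and the only reasonable one here. One small wording caution: you say ``if we \emph{define} $p_{n+1},q_{n+1}$ by the recurrences,'' but $p_{n+1},q_{n+1}$ are already fixed by the column-vector formula $\left(\begin{smallmatrix}p_{n+1}\\ q_{n+1}\end{smallmatrix}\right)=\left(\prod_{1}^{n}M_{i}\right)\left(\begin{smallmatrix}0\\ 1\end{smallmatrix}\right)$; the cleaner phrasing is that the second column of $\prod_{1}^{n}M_{i}$ equals $\left(\begin{smallmatrix}p_{n+1}\\ q_{n+1}\end{smallmatrix}\right)$ by that definition while also equalling $\left(\begin{smallmatrix}a_{n}p_{n}+b_{n}p_{n-1}\\ a_{n}q_{n}+b_{n}q_{n-1}\end{smallmatrix}\right)$ by the block computation, from which the recurrence is \emph{deduced} rather than imposed. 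This fixes the apparent circularity without changing any content, and your index bookkeeping (checking $n=1$ against the identity matrix) is exactly right.
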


\begin{proof}
Left as an exercise.
\end{proof}
Now that we have this basic result, we can apply it to our irrationality
test as follows:
\begin{claim}
\label{claim:upper-bound}Let $a_{n},b_{n},p_{n},q_{n}$ be sequence
of integers satisfying the recurrence from \lemref{gcf-recursion}. 
\begin{enumerate}
\item If $\frac{p_{n}}{q_{n}}\to L$, then 
\[
\left|L-\frac{p_{n}}{q_{n}}\right|=\left|\sum_{k=n}^{\infty}\frac{\prod_{1}^{k}b_{i}}{q_{k}q_{k+1}}\right|\leq\sum_{k=n}^{\infty}\frac{\prod_{1}^{k}\left|b_{i}\right|}{\left|q_{k}q_{k+1}\right|}.
\]
\item Suppose in addition that the $b_{n}$ are nonzero. Then $\frac{p_{n}}{q_{n}}$
is not eventually constant, so that $\left|q_{n}L-p_{n}\right|=o\left(\gcd\left(p_{n},q_{n}\right)\right)$
implies that $L$ is irrational.
\end{enumerate}
\end{claim}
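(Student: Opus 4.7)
The whole claim reduces to a single determinantal identity extracted from the matrix-product representation in \lemref{gcf-recursion}. Since $\det\left(\begin{smallmatrix}0 & b_i\\ 1 & a_i\end{smallmatrix}\right) = -b_i$, multiplicativity of the determinant together with $\left(\begin{smallmatrix}p_{k-1} & p_k\\ q_{k-1} & q_k\end{smallmatrix}\right) = \prod_{1}^{k-1} M_i$ gives
\[
p_{k-1}q_k - p_k q_{k-1} \;=\; \det\!\left(\prod_{i=1}^{k-1} M_i\right) \;=\; (-1)^{k-1}\prod_{i=1}^{k-1} b_i.
\]
This is the only nontrivial ingredient; everything else is bookkeeping.

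For part (1), I first observe that $p_n/q_n \to L$ forces $q_k \neq 0$ for all sufficiently large $k$, so dividing the identity (shifted by one) by $q_k q_{k+1}$ is legal for large $k$ and yields
\[
\frac{p_{k+1}}{q_{k+1}} - \frac{p_k}{q_k} \;=\; \frac{(-1)^{k+1}\prod_{i=1}^{k}b_i}{q_k q_{k+1}}.
\]
Telescoping from index $n$ to infinity and passing to the limit gives $L - p_n/q_n$ as the stated series (up to the alternating signs that are absorbed by the outer absolute value). The triangle inequality then produces the claimed upper bound $\sum_{k\geq n}\prod_1^k|b_i|/|q_k q_{k+1}|$.

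For part (2), I invoke the same identity at level $k$: since every $b_i \neq 0$, its right-hand side is nonzero, so $p_{k-1}q_k - p_k q_{k-1} \neq 0$ and thus $p_{k-1}/q_{k-1} \neq p_k/q_k$ for every $k$. In particular the sequence $p_n/q_n$ is never eventually constant, so the hypotheses of \thmref{improved-rationality-test} are satisfied, and the condition $|q_nL - p_n| = o(\gcd(p_n,q_n))$ forces $L$ to be irrational.

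There is no real obstacle here; the only point requiring a moment's care is ensuring $q_k \neq 0$ eventually so that the telescoping sum makes sense, which is automatic from the convergence hypothesis.
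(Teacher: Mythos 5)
Your proof is correct and is essentially the paper's own argument: both use the determinant identity $p_{k-1}q_k - p_kq_{k-1} = \det\prod_{1}^{k-1}M_i$ from \lemref{gcf-recursion}, telescope the consecutive-convergent differences, and apply the triangle inequality for part (1), then invoke nonvanishing of the determinant for part (2). Your explicit sign-tracking of $(-1)^{k+1}\prod b_i$ versus the paper's $|\det M_i|=|b_i|$ is purely cosmetic.
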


\begin{proof}
\begin{enumerate}
\item Let $M_{n}=\begin{pmatrix}0 & b_{n}\\
1 & a_{n}
\end{pmatrix}$ so that $\prod_{1}^{n-1}M_{i}=\left(\begin{smallmatrix}p_{n-1} & p_{n}\\
q_{n-1} & q_{n}
\end{smallmatrix}\right).$ For all $m\geq n$ we have
\[
L-\frac{p_{n}}{q_{n}}=L-\frac{p_{m+1}}{q_{m+1}}+\sum_{k=n}^{m}\left(\frac{p_{k+1}}{q_{k+1}}-\frac{p_{k}}{q_{k}}\right)=L-\frac{p_{m+1}}{q_{m+1}}-\sum_{k=n}^{m}\frac{\det\left(\begin{smallmatrix}p_{k} & p_{k+1}\\
q_{k} & q_{k+1}
\end{smallmatrix}\right)}{q_{k}q_{k+1}}.
\]
Under the assumption that $L-\frac{p_{m+1}}{q_{m+1}}\to0$ as $m\to\infty$,
we conclude that 
\[
\left|L-\frac{p_{n}}{q_{n}}\right|\leq\sum_{k=n}^{\infty}\left|\frac{\det\left(\begin{smallmatrix}p_{k} & p_{k+1}\\
q_{k} & q_{k+1}
\end{smallmatrix}\right)}{q_{k}q_{k+1}}\right|=\sum_{k=n}^{\infty}\frac{\prod_{1}^{k}\left|\det\left(M_{i}\right)\right|}{\left|q_{k}q_{k+1}\right|}=\sum_{k=n}^{\infty}\frac{\prod_{1}^{k}\left|b_{i}\right|}{\left|q_{k}q_{k+1}\right|}.
\]
\item In case that the $b_{k}\neq0$ for all $k$, then
\[
\left|\frac{p_{n+1}}{q_{n+1}}-\frac{p_{n}}{q_{n}}\right|=\left|\frac{\det\left(\begin{smallmatrix}p_{n} & p_{n+1}\\
q_{n} & q_{n+1}
\end{smallmatrix}\right)}{q_{n}q_{n+1}}\right|=\frac{\prod_{1}^{n}\left|b_{k}\right|}{\left|q_{n}q_{n+1}\right|}\neq0.
\]
It follows that the $\frac{p_{n}}{q_{n}}$ is not eventually constant,
so we can apply \thmref{improved-rationality-test} to prove this
claim.
\end{enumerate}
\end{proof}
The recursion relation of the $q_{i}$ suggests that the larger the
$\left|b_{i}\right|$ are, the faster the growth of $\left|q_{i}\right|$
is, and in general we expect it to be fast enough so that $\sum_{k=1}^{\infty}\frac{\prod_{1}^{k+1}\left|b_{i}\right|}{\left|q_{k}q_{k+1}\right|}$
will converge. However, it might still not be in $\left|q_{n}L-p_{n}\right|=o\left(\frac{1}{\left|q_{n}\right|}\right)$.
Hopefully, if the $gcd\left(p_{n},q_{n}\right)$ is large enough,
then it is in $o\left(\frac{\gcd\left(p_{n},q_{n}\right)}{\left|q_{n}\right|}\right)$,
which is enough to prove irrationality.

\bigskip{}

\newpage{}

\section{\label{sec:Definition-properties-CMF}The conservative matrix field
- definition and properties}

Until now we mainly looked at a single continued fractions $\KK_{1}^{\infty}\frac{b_{i}}{a_{i}}$,
and in particular where $a_{i}=a\left(i\right),b_{i}=b\left(i\right)$
with $a,b\in\ZZ\left[x\right]$. In this section we define the\textbf{
conservative matrix field}, which is a collection of such continued
fractions with interesting connections between them. 
\begin{defn}
A pair of matrices $M_{X}\left(x,y\right),M_{Y}\left(x,y\right)$
is called a \textbf{conservative matrix field} (or just \textbf{matrix
field} for simplicity), if
\begin{enumerate}
\item The entries of $M_{X}\left(x,y\right),M_{Y}\left(x,y\right)$ are
polynomial in $x,y$ ,
\item The matrices satisfy the conservativeness relation
\[
M_{X}\left(x,y\right)M_{Y}\left(x+1,y\right)=M_{Y}\left(x,y\right)M_{X}\left(x,y+1\right)\;\forall x,y,
\]
or in commutative diagram form:
\[
\xyR{.5pc}\xyC{0.5pc}\xymatrix{\left(x,y+1\right)\ar[rr]^{M_{X}\left(x,y+1\right)} &  & \left(x+1,y+1\right)\\
 & {\Huge\leftturn}\\
\left(x,y\right)\ar[rr]_{M_{X}\left(x,y\right)}\ar[uu]^{M_{Y}\left(x,y\right)} &  & \left(x+1,y\right)\ar[uu]_{M_{Y}\left(x+1,y\right)}
}
\xyR{1pc}\xyC{1pc}
\]
\end{enumerate}
\end{defn}

\begin{rem}
In the definition of conservative matrix field we only use 2 matrices
for simplicity, though a similar definition can be given for $n$
polynomial matrices. Additionally, while we don't add it as a condition
to the matrix field, in many of the interesting examples that we have
found so far, at least one of the direction $M_{X}$ or $M_{Y}$ is
``almost'' in continued fraction form, namely $\begin{pmatrix}0 & b\left(x,y\right)\\
1 & a\left(x,y\right)
\end{pmatrix}$.
\end{rem}

The name \emph{conservative matrix field} arose from the resemblance
to \emph{conservative vector field}. Visualizing the commutative diagram's
corners as points in the plane, the notion is that traveling along
the bottom and then right edge or the left and then top edge yields
the same product, which essentially is the behaviour of standard conservative
vector fields. Retaining this intuitive connection, led to the adoption
of the name conservative matrix field. This similarity can be formalized
using cohomology language - both are 1-cocycles with the appropriate
groups, though we will not use this cohomology theory in this paper,
and keep it elementary.

One of the main differences, is that moving left or down along the
matrix field amounts to multiplying by $M_{X}\left(x,y\right)^{-1},M_{Y}\left(x,y\right)^{-1}$
respectively, which aren't necessarily invertible. However, they will
be invertible in most cases (e.g. for $x,y\geq1$), and with that
in mind, we define:
\begin{defn}[Potential matrix]
\label{def:potential_matrix}Given a matrix field $M_{X},M_{Y}$,
and initial position $\left(n_{0},m_{0}\right)\in\ZZ^{2}$, we define
the potential $S\left(n,m\right)=S_{n_{0},m_{0}}\left(n,m\right)$
matrix for $n\geq n_{0},m\geq m_{0}$ by

\[
S\left(n,m\right)=\prod_{n_{0}}^{n-1}M_{X}\left(k,m_{0}\right)\cdot\prod_{m_{0}}^{m-1}M_{Y}\left(n,k\right).
\]
Note that the potential is independent of the choice of path from
$\left(n_{0},m_{0}\right)$ to $\left(n,m\right)$.
\end{defn}

\medskip{}

Given a path to infinity $\left(n_{i},m_{i}\right)$, it is natural
to ask whether $\lim S\left(n_{i},m_{i}\right)\left(0\right)$ converges,
and how does changing the path affects the limit. In particular, if
$\alpha=\lim S\left(n_{i},m_{i}\right)\left(0\right)$ along some
path, can we extract other properties of $\alpha$ from the rest of
the matrix field? For example, in \secref{The-z3-case} we will construct
such a matrix field for $\alpha=\zeta\left(3\right)$, starting from
its Euler continued fraction (from \exaref{Euler_zeta_3_example})
on the $Y=1$ line, then see that the limits on the $Y=m$ lines converge
to $\sum_{m}^{\infty}\frac{1}{k^{3}}$, and the diagonal line $X=Y$
can be used to define another continued fraction presentation which
converges to $\zeta\left(3\right)$ fast enough to prove its irrationality.

With this intuition in mind, we start with a construction for a specific
family of matrix fields with many interesting properties in \subsecref{matrix-field-construction},
where in particular each row is a polynomial continued fraction. We
then ``twist'' it a little bit (which is formally a ``coboundary
equivalence'' from cohomology), to get a matrix field which is easier
to work with. Then in \subsecref{The-dual-matrix-field} we find out
how every such matrix field comes with its dual, which is in a sense
a reflection through the $x=y$ line. Once we have this dual matrix
field, we study the numerators and denominators of the continued fractions
in that matrix field, and in particular find their greatest common
divisors. Finally we show how to put everything together in \secref{The-z3-case}
to show that $\zeta\left(3\right)$ is irrational.
\begin{rem}
Before continuing to this interesting construction, we note first
that there are several ``trivial `` constructions.

First, if we remove the polynomiality condition, then starting with
any potential function $S\left(n,m\right)$ of invertible matrices
for say $n,m\geq0$, we may define 
\begin{align*}
M_{X}\left(n,m\right) & =S\left(n,m\right)^{-1}S\left(n+1,m\right)\\
M_{Y}\left(n,m\right) & =S\left(n,m\right)^{-1}S\left(n,m+1\right),
\end{align*}
to automatically get the conservativeness condition above true.

However, even with the polynomiality condition, there are still trivial
constructions. 
\begin{enumerate}
\item \textbf{\uline{The commutative construction}}: If $M_{X}\left(x,y\right)=A_{X}\left(x\right)$
and $M_{Y}\left(x,y\right)=A_{Y}\left(y\right)$ only depend on $x$
and $y$ respectively, then the conservative condition will become
the commutativity condition
\[
A_{X}\left(x\right)A_{Y}\left(y\right)=M_{X}\left(x,y\right)M_{Y}\left(x+1,y\right)=M_{Y}\left(x,y\right)M_{X}\left(x,y+1\right)=A_{Y}\left(y\right)A_{X}\left(x\right).
\]
One way to construct such examples, is to take two polynomials in
commuting variables $f_{X}\left(u,v\right),\;f_{Y}\left(u,v\right)$
, and a given $2\times2$ matrix $B$, and simply define
\begin{align*}
A_{X}\left(x\right) & =f_{X}\left(xI,B\right)\\
A_{Y}\left(y\right) & =f_{Y}\left(yI,B\right).
\end{align*}
\item \textbf{\uline{The 1-dimension inflation}}: If $M\left(t\right)$
is any polynomial matrix in a single variable, then
\begin{align*}
M_{X}\left(x,y\right) & =M_{Y}\left(x,y\right)=M\left(x+y\right)
\end{align*}
is a conservative matrix field.
\end{enumerate}
There are other constructions as well, some with more interesting
properties, and some more ``trivial''. It is still not clear what
is the condition, and if there is such, that makes a conservative
matrix field into an ``interesting'' one. As for now we keep the
current algebraic definition, but as mentioned before, there should
probably be an analytic comopnent as well to the definition, which
relates to the limit of the potential on different pathes to infinity
(for example, see \thmref{limit_on_each_line} where we go to infinity
along a fixed row $y=m$).
\end{rem}

\medskip{}

\begin{rem}
In our investigations of these matrix fields, in many cases we came
across a ``natural'' family of polynomial matrices $M_{X}\left(x,y\right)$
in $x$ (and in particular of continued fractions), and asked whether
there exists a polynomial matrix $M_{Y}\left(x,y\right)$ which completes
it to a matrix field. For that purpose we created a python code which
looks for such solutions, which you can find in \cite{ramanujan_machine_research_group_ramanujan_2023}.
In this paper, we focus on matrix field for the construction given
in the next section, however, we have found many other with interesting
properties which do not seem to fall under that construction.
\end{rem}

\newpage{}

\subsection{\label{subsec:matrix-field-construction}A matrix field construction}

We begin with an interesting construction of a family of conservative
matrix fields.
\begin{defn}
\label{def:conjugate}We say that two polynomial $f,\bar{f}\in\CC\left[x,y\right]$
are \textbf{conjugate}, if they satisfy:
\begin{enumerate}
\item \textbf{\uline{Linear condition}}: 
\[
f\left(x+1,y-1\right)-\bar{f}\left(x,y-1\right)=f\left(x,y\right)-\bar{f}\left(x+1,y\right).
\]
When this condition holds, we denote the expression above by $a_{f,\bar{f}}\left(x,y\right):=a\left(x,y\right)$.
\item \textbf{\uline{Quadratic condition}}: 
\[
\left(f\bar{f}\right)\left(x,y\right)-\left(f\bar{f}\right)\left(0,y\right)=\left(f\bar{f}\right)\left(x,0\right)-\left(f\bar{f}\right)\left(0,0\right).
\]
In other words, there are no mixed monomials $x^{n}y^{m}$ where $n,m>0$
in $\left(f\bar{f}\right)\left(x,y\right)$, so we can write it as
$\fbf\left(x,y\right)=b_{X}\left(x\right)+b_{Y}\left(y\right)$.
\end{enumerate}
Given two such conjugate polynomials, and a decomposition $\fbf\left(x,y\right)=b_{X}\left(x\right)+b_{Y}\left(y\right)$,
setting $b\left(x\right)=b_{X}\left(x\right)$ we write
\begin{align*}
M_{X}^{cf}\left(x,y\right) & =\left(\begin{smallmatrix}0 & b\left(x\right)\\
1 & a\left(x,y\right)
\end{smallmatrix}\right)\\
M_{Y}^{cf}\left(x,y\right) & =\left(\begin{smallmatrix}\bar{f}\left(x,y\right) & b\left(x\right)\\
1 & f\left(x,y\right)
\end{smallmatrix}\right).
\end{align*}
The $cf$ indicates that $M_{X}^{cf}$ has continued fraction form.
We will shortly change it a little bit and remove these $cf$.
\end{defn}

\medskip{}

\begin{rem}
\label{rem:ff(0)=00003D0}In the case where $b_{X}\left(x\right)=\fbf\left(x,0\right)$,
the $y=1$ line is a continued fraction with $b_{i}=\left(f\bar{f}\right)\left(i,0\right)$
and $a_{i}=f\left(i+1,0\right)-\bar{f}\left(i,0\right)$. This is
in the trivial Euler family defined in \thmref{recurrence-roots},
where $h_{1}\left(x\right)=f\left(x,0\right)$ and $h_{2}\left(x\right)=-\bar{f}\left(x,0\right)$.
This will come into play later (see part \enuref{first-line-polynomials}
in \claimref{dual-field-identities}). 
\end{rem}

\begin{example}[\textbf{The $\zeta\left(3\right)$ matrix field}]
\label{exa:zeta-3-matrix-field}The main example that we should have in mind is a matrix field for
$\zeta\left(3\right)$ defined by
\begin{align*}
f\left(x,y\right) & =x^{3}+2x^{2}y+2xy^{2}+y^{3}=\frac{y^{3}-x^{3}}{y-x}\left(y+x\right)\\
\bar{f}\left(x,y\right) & =-x^{3}+2x^{2}y-2xy^{2}+y^{3}=\frac{y^{3}+x^{3}}{y+x}\left(y-x\right)=f\left(-x,y\right)\\
\left(f\bar{f}\right)\left(x,y\right) & =y^{6}-x^{6}\\
b\left(x\right) & =\left(f\bar{f}\right)\left(x,0\right)=-x^{6}\\
a\left(x,y\right) & =x^{3}+\left(1+x\right)^{3}+2y\left(y-1\right)\left(2x+1\right).
\end{align*}
In particular, as in the remark above, the $y=1$ line is the Euler
continued fraction with $b\left(n\right)=-n^{6}$ and $a\left(n,0\right)=n^{3}+\left(1+n\right)^{3}$,
which we already saw in \exaref{Euler_zeta_3_example} that its convergents
are $\left(\sum_{1}^{n}\frac{1}{k^{3}}\right)^{-1}-1$. We will see
in \secref{The-z3-case}, and more specifically in \corref{zeta_3_limit_on_lines},
that for any fixed integer $y=m\geq1$, the continued fraction with
$b_{n}=b\left(n\right)$ and $a_{n}=a\left(n,m\right)$ converges
to $\frac{1}{\sum_{m}^{\infty}\frac{1}{k^{3}}}-1$.

The polynomial matrices in this matrix field are 
\begin{align*}
M_{X}^{cf}\left(x,y\right) & =\left(\begin{smallmatrix}0 & b\left(x\right)\\
1 & a\left(x,y\right)
\end{smallmatrix}\right)=\left(\begin{smallmatrix}0 & -x^{6}\\
1 & x^{3}+\left(1+x\right)^{3}+2y\left(y-1\right)\left(2x+1\right)
\end{smallmatrix}\right)\\
M_{Y}^{cf}\left(x,y\right) & =\left(\begin{smallmatrix}\bar{f}\left(x,y\right) & b\left(x\right)\\
1 & f\left(x,y\right)
\end{smallmatrix}\right)=\left(\begin{smallmatrix}\frac{y^{3}+x^{3}}{y+x}\left(y-x\right) & -x^{6}\\
1 & \frac{y^{3}-x^{3}}{y-x}\left(y+x\right)
\end{smallmatrix}\right)
\end{align*}
and the first few of them are 
\begin{align*}
\xyR{1.5pc}\xyC{1.5pc}\xymatrix{\left(*\right) &  &  & \left(*\right) &  &  & \left(*\right) &  & \left(*\right)\\
\\
*+[F]{\left(1,3\right)}\ar[rrr]_{\left(\begin{smallmatrix}0 & -1\\
1 & 45
\end{smallmatrix}\right)}\ar[uu]^{\left(\begin{smallmatrix}14 & -1\\
1 & 52
\end{smallmatrix}\right)} &  &  & *+[F]{\left(2,3\right)}\ar[rrr]_{\left(\begin{smallmatrix}0 & -2^{3}\\
1 & 95
\end{smallmatrix}\right)}\ar[uu]^{\left(\begin{smallmatrix}7 & -1\\
1 & 95
\end{smallmatrix}\right)} &  &  & *+[F]{\left(3,3\right)}\ar[rr]_{\left(\begin{smallmatrix}0 & -3^{3}\\
1 & 175
\end{smallmatrix}\right)}\ar[uu]^{\left(\begin{smallmatrix}0 & -1\\
1 & 168
\end{smallmatrix}\right)} &  & \left(*\right)\\
\\
*+[F]{\left(1,2\right)}\ar[rrr]_{\left(\begin{smallmatrix}0 & -1\\
1 & 21
\end{smallmatrix}\right)}\ar[uu]^{\left(\begin{smallmatrix}3 & -1\\
1 & 21
\end{smallmatrix}\right)} &  &  & *+[F]{\left(2,2\right)}\ar[rrr]_{\left(\begin{smallmatrix}0 & -2^{3}\\
1 & 55
\end{smallmatrix}\right)}\ar[uu]^{\left(\begin{smallmatrix}0 & -1\\
1 & 48
\end{smallmatrix}\right)} &  &  & *+[F]{\left(3,2\right)}\ar[rr]_{\left(\begin{smallmatrix}0 & -3^{3}\\
1 & 119
\end{smallmatrix}\right)}\ar[uu]^{\left(\begin{smallmatrix}-7 & -1\\
1 & 95
\end{smallmatrix}\right)} &  & \left(*\right)\\
\\
*+[F]{\left(1,1\right)}\ar[rrr]_{\left(\begin{smallmatrix}0 & -1\\
1 & 9
\end{smallmatrix}\right)}\ar[uu]^{\left(\begin{smallmatrix}0 & -1\\
1 & 6
\end{smallmatrix}\right)} &  &  & *+[F]{\left(2,1\right)}\ar[rrr]_{\left(\begin{smallmatrix}0 & -2^{3}\\
1 & 35
\end{smallmatrix}\right)}\ar[uu]^{\left(\begin{smallmatrix}-3 & -1\\
1 & 21
\end{smallmatrix}\right)} &  &  & *+[F]{\left(3,1\right)}\ar[rr]_{\left(\begin{smallmatrix}0 & -3^{3}\\
1 & 91
\end{smallmatrix}\right)}\ar[uu]^{\left(\begin{smallmatrix}-14 & -1\\
1 & 52
\end{smallmatrix}\right)} &  & \left(*\right)
}
\end{align*}
\end{example}

We continue to show that this general construction produces conservative
matrix fields.
\begin{thm}
\label{thm:matrix-field-structure}Given polynomials $f,\bar{f},a,b$
where $b\not\equiv0$, define the matrices
\begin{align*}
M_{X}^{cf}\left(x,y\right) & =\left(\begin{smallmatrix}0 & b\left(x\right)\\
1 & a\left(x,y\right)
\end{smallmatrix}\right)\\
M_{Y}^{cf}\left(x,y\right) & =\left(\begin{smallmatrix}\bar{f}\left(x,y\right) & b\left(x\right)\\
1 & f\left(x,y\right)
\end{smallmatrix}\right).
\end{align*}
The following hold:
\begin{enumerate}
\item The polynomials $f,\bar{f},a,b$ are as in \defref{conjugate} if
and only if the conservativeness condition holds
\[
M_{X}^{cf}\left(x,y\right)M_{Y}^{cf}\left(x+1,y\right)=M_{Y}^{cf}\left(x,y\right)M_{X}^{cf}\left(x,y+1\right)\;\forall x,y.
\]
\item The determinants of $M_{X}^{cf}\left(x,y\right),M_{Y}^{cf}\left(x,y\right)$
are only functions of $x,y$ respectively, and more specifically:
\begin{align*}
\det\left(M_{X}^{cf}\left(x,y\right)\right) & =-b\left(x\right)=-b_{X}\left(x\right)\\
\det\left(M_{Y}^{cf}\left(x,y\right)\right) & =b_{Y}\left(y\right).
\end{align*}
\end{enumerate}
\end{thm}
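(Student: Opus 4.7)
The plan is a direct entry-by-entry expansion of both sides of the conservativeness relation, showing that the four resulting polynomial identities, under the standing hypothesis $b\not\equiv 0$, are equivalent to the linear and quadratic conditions of \defref{conjugate}. Part (2) will then drop out as a one-line determinant computation using the structural information already produced in part (1).

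First I would expand $M_X^{cf}(x,y)\,M_Y^{cf}(x+1,y)$ and $M_Y^{cf}(x,y)\,M_X^{cf}(x,y+1)$ as $2\times 2$ matrices in $f,\bar f,a,b$. The $(1,1)$ entries match trivially (both equal $b(x)$). The $(1,2)$ entries read $b(x)\,f(x+1,y)=b(x)\bigl(\bar f(x,y)+a(x,y+1)\bigr)$, which, cancelling $b(x)\not\equiv 0$, gives $a(x,y+1)=f(x+1,y)-\bar f(x,y)$. The $(2,1)$ entries yield the parallel relation $a(x,y)=f(x,y)-\bar f(x+1,y)$. Shifting $y\mapsto y+1$ in the latter and matching with the former reproduces exactly the linear condition of \defref{conjugate}, with common value $a_{f,\bar f}(x,y+1)$.

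Next I would substitute these two expressions for $a$ into the $(2,2)$ entries. When expanding $a(x,y)\,f(x+1,y)$ and $f(x,y)\,a(x,y+1)$, the cross terms $f(x,y)\,f(x+1,y)$ cancel and one is left with
\[
b(x+1)-b(x)=\fbf(x+1,y)-\fbf(x,y).
\]
Equivalently, $\fbf(x,y)-b(x)$ is independent of $x$, which is exactly the quadratic condition together with the identification $b(x)=b_X(x)$ in the decomposition $\fbf(x,y)=b_X(x)+b_Y(y)$. Running every step in reverse (each implication used a cancellation that is reversible under $b\not\equiv 0$) gives the converse direction, completing the equivalence of part (1).

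For part (2), the $2\times 2$ determinants are immediate: $\det M_X^{cf}(x,y)=-b(x)=-b_X(x)$, and $\det M_Y^{cf}(x,y)=f(x,y)\bar f(x,y)-b(x)=\fbf(x,y)-b_X(x)=b_Y(y)$, where the last equality is exactly the quadratic condition established in part (1). The only real difficulty is bookkeeping: tracking the $y$ versus $y+1$ shifts carefully when expanding the two products, and noting that the decomposition $b_X+b_Y$ of $\fbf$ is unique only up to an additive constant, which is harmlessly absorbed into $b_Y$ once we fix $b_X(x)=b(x)$.
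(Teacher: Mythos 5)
Your proposal is correct and follows essentially the same route as the paper: expand both matrix products entry-by-entry, extract the linear condition from the bottom-left and top-right entries, and then substitute into the remaining entry where the $f(x,y)f(x+1,y)$ cross terms cancel to leave $b(x+1)-b(x)=\fbf(x+1,y)-\fbf(x,y)$, i.e.\ the quadratic condition; part (2) is the same one-line determinant computation. Your explicit remark that the $b(x)$-cancellation is reversible precisely because $b\not\equiv 0$ in the polynomial ring makes the ``if and only if'' slightly more transparent than the paper's terse ``are equivalent to,'' but the argument is the same.
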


\begin{proof}
\begin{enumerate}
\item Considering the conservative condition, we obtain 
\begin{align*}
M_{X}^{cf}\left(x,y\right)M_{Y}^{cf}\left(x+1,y\right) & =\left(\begin{smallmatrix}0 & b\left(x\right)\\
1 & a\left(x,y\right)
\end{smallmatrix}\right)\left(\begin{smallmatrix}\bar{f}\left(x+1,y\right) & b\left(x+1\right)\\
1 & f\left(x+1,y\right)
\end{smallmatrix}\right)=\left(\begin{smallmatrix}b\left(x\right) & b\left(x\right)f\left(x+1,y\right)\\
\bar{f}\left(x+1,y\right)+a\left(x,y\right) & b\left(x+1\right)+a\left(x,y\right)f\left(x+1,y\right)
\end{smallmatrix}\right)\\
M_{Y}^{cf}\left(x,y\right)M_{X}^{cf}\left(x,y+1\right) & =\left(\begin{smallmatrix}\bar{f}\left(x,y\right) & b\left(x\right)\\
1 & f\left(x,y\right)
\end{smallmatrix}\right)\left(\begin{smallmatrix}0 & b\left(x\right)\\
1 & a\left(x,y+1\right)
\end{smallmatrix}\right)=\left(\begin{smallmatrix}b\left(x\right) & b\left(x\right)\left(\bar{f}\left(x,y\right)+a\left(x,y+1\right)\right)\\
f\left(x,y\right) & b\left(x\right)+f\left(x,y\right)a\left(x,y+1\right)
\end{smallmatrix}\right).
\end{align*}
The equality at the bottom left and top right corners are equivalent
to 
\begin{align*}
a\left(x,y\right) & =f\left(x,y\right)-\bar{f}\left(x+1,y\right)\\
a\left(x,y+1\right)= & f\left(x+1,y\right)-\bar{f}\left(x,y\right),
\end{align*}
which is the linear condition. Given this condition, the bottom right
corner equality is equivalent to 
\[
b\left(x+1\right)-b\left(x\right)=f\left(x,y\right)a\left(x,y+1\right)-a\left(x,y\right)f\left(x+1,y\right)=\fbf\left(x+1,y\right)-\fbf\left(x,y\right).
\]
This means that $b\left(x\right)-\fbf\left(x,y\right)$ is independent
of $x$, or equivalently there is a decomposition $\fbf\left(x,y\right)=b_{X}\left(x\right)+b_{Y}\left(y\right)$
which is exactly the quadratic condition.
\item Simple computation.
\end{enumerate}
\end{proof}
\begin{rem}
Once we have a matrix field as in \thmref{matrix-field-structure},
changing the origin to $\left(\alpha,\beta\right)$ is equivalent
to looking at the polynomials 
\begin{align*}
f_{\alpha,\beta}\left(x,y\right) & :=f\left(x+\alpha,y+\beta\right),\\
\bar{f}_{\alpha,\beta}\left(x,y\right) & :=\bar{f}\left(x+\alpha,y+\beta\right).
\end{align*}
The corresponding decomposition $\fbf\left(x,y\right)=b_{X}\left(x\right)+b_{Y}\left(y\right)$
then becomes 
\[
\left(f_{\alpha,\beta}\bar{f}_{\alpha,\beta}\right)\left(x,y\right)=b_{X}\left(x+\alpha\right)+b_{Y}\left(y+\beta\right).
\]
We are mainly interested in moving along the integer points of the
matrix field, and if $\alpha,\beta$ are integers, then this translation
is done inside this integer lattice. In particular, up to this integer
translation we may assume that $b_{X}\left(x\right)\neq0$ for $x>0$
(though for now we allow $b_{X}\left(0\right)=0$). We similarly assume
that in the $y$-direction. Thus, all the matrices $M_{X}\left(x,y\right),M_{Y}\left(x,y\right)$
for $x,y>0$ are invertible.
\end{rem}

\begin{example}
\label{exa:matrix-field-examples}There are many examples for this
construction, and we give some of them below.

Starting with trivial solutions, whenever $f\left(x,y\right)=A\left(y\right)+B\left(x\right)$
and $\bar{f}\left(x,y\right)=A\left(y\right)-B\left(x\right)$, it
is easy to check that the linear and quadratic conditions hold. However,
in this case we have that $a\left(x,y\right):=f\left(x,y\right)-\bar{f}\left(x+1,y\right)=B\left(x\right)+B\left(x+1\right)$
doesn't depend on $y$, so all the horizontal lines in the matrix
fields are the same, and therefore in a sense it is degenerate. Fortunately,
there are many cases of nondegenerate matrix fields.\\

In the following examples, for each pair $f,\bar{f}$, we also add
the $b\left(x\right),a\left(x,y\right)$ appearing as the continued
fraction on the horizontal lines. In particular, as we saw in \remref{ff(0)=00003D0},
when $b\left(x\right)=\fbf\left(x,0\right)$, the $y=1$ line is in
the Euler Family from \thmref{recurrence-roots}, namely $b\left(n\right)=-h_{1}\left(n\right)\times h_{2}\left(n\right)$
and $a\left(n\right)=h_{1}\left(n\right)+h_{2}\left(n+1\right)$.
In these cases we can convert it to an infinite sum and hopefully
use it to compute the value of the continued fraction, which we add
in the examples below (up to a Mobius map). Further more, in many
cases we think of $\bar{f}$ as an image under some nice linear map
$g\mapsto\bar{g}$ of $f$, and when this is the case, we will give
this linear map instead of $\bar{f}$.
\begin{enumerate}
\item When both $f,\bar{f}$ are linear themselves, solving the linear and
quadratic conditions in \defref{conjugate} is elementary (which we
leave as an exercise). There is one nontrivial family
\begin{align*}
f\left(x,y\right) & =A\left(x+y\right)+C\\
\bar{f}\left(x,y\right) & =\bar{A}\left(x-y\right)+\bar{C}
\end{align*}
where $A,C,\bar{A},\bar{C}$ above are the parameters of the family.
\begin{enumerate}
\item Taking $f\left(x,y\right)=x+y$ and $\bar{f}\left(x,y\right)=x-y$,
we get $b\left(x\right)=x^{2}$ and $a\left(x,y\right)=2y-1$. In
$y=1$ we get the continued fraction
\[
\KK_{1}^{\infty}\frac{n^{2}}{1}=\KK_{1}^{\infty}\frac{-\left(-n\right)\times n}{\left(-n\right)+\left(n+1\right)}=\frac{1}{\sum_{k=0}^{\infty}\prod_{i=1}^{k}\frac{-i}{i+1}}-1=\frac{1}{\sum_{k=0}^{\infty}\frac{\left(-1\right)^{k}}{k+1}}-1=\frac{1-\ln\left(2\right)}{\ln\left(2\right)}.
\]
Taking $\bar{f}\left(x,y\right)=y-x$ instead (so it is a ``trivial''
solution), we get $b\left(x\right)=-x^{2}$ and $a\left(x,y\right)=2x+1$.
Since $a$ is independent of $y$, all the horizontal lines in the
matrix field are the same, so in a sense it is degenerate. Moreover,
trying to compute the continued fraction produces
\[
\KK_{1}^{\infty}\frac{-n^{2}}{2n+1}=\KK_{1}^{\infty}\frac{-n\times n}{n+\left(n+1\right)}=\frac{1}{\sum_{k=0}^{\infty}\prod_{i=1}^{k}\frac{i}{i+1}}-1=\frac{1}{\sum_{k=0}^{\infty}\frac{1}{k+1}}-1=-1,
\]
since the harmonic sum $\sum_{0}^{\infty}\frac{1}{k+1}$ diverges
to infinity.
\item For $f\left(x,y\right)=x+y$ and $\bar{f}\left(x,y\right)=1$ (which
we can think of as $\frac{\partial f}{\partial x}=\frac{\partial f}{\partial y}=\bar{f}$),
we get $b\left(x\right)=x,\;a\left(x,y\right)=x+y-1$, and in the
$y=1$ case we get 
\[
\KK_{1}^{\infty}\frac{n}{n}=\KK_{1}^{\infty}\frac{-\left(\left(-1\right)\times n\right)}{\left(-1\right)+\left(n+1\right)}=\frac{1}{\sum_{k=0}^{\infty}\prod_{i=1}^{k}\frac{-1}{i+1}}-1=\frac{1}{e-1}.
\]
\end{enumerate}
\item When $f,\bar{f}$ have degree at most 2, then we have the following
families of examples (as function of $C$):{\tiny{}
\[
\begin{array}{c|c|c|c|c}
\text{operation} & f\left(x,y\right) & a\left(x,y\right) & b\left(x\right) & \text{Euler family}\;\left(a\left(x,1\right)\right)\\
\hline \bar{g}\left(x,y\right)=-g\left(-x,y\right) & x^{2}+xy+\frac{y^{2}}{2}+C\left(x+y\right) & \left(x+1\right)^{2}+x^{2}+y\left(y-1\right)+C\left(2y-1\right) & -x^{2}\left(x^{2}-C^{2}\right) & \left(x+1\right)\left(x+1+C\right)+x\left(x-C\right)\\
\bar{g}\left(x,y\right)=g\left(-x,y\right) & x^{2}+2xy+2y^{2}+C\left(2y-x\right) & \left(2x+1\right)\left(2y-1+C\right) & x^{2}\left(x^{2}-C^{2}\right) & \left(x+1\right)\left(x+1+C\right)-x\left(x-C\right)\\
\bar{g}\left(x,y\right)=g\left(x,-y\right) & x^{2}+2xy+2y^{2}+C\left(x+y\right) & \left(2x+1+C\right)\left(2y-1\right) & x^{2}\left(x+C\right)^{2} & \left(x+1\right)\left(x+C+1\right)-x\left(x+C\right)\\
\bar{g}\left(x,y\right)=-g\left(x,-y\right) & \frac{2x^{2}+2xy+y^{2}+C\left(2x+y\right)}{2} & C\left(2x+1\right)+x^{2}+\left(x+1\right)^{2}+y\left(y-1\right) & -x^{2}\left(x+C\right)^{2} & \left(x+1\right)\left(x+C+1\right)+x\left(x+C\right)
\end{array}
\]
}In particular, when taking $C=0$, the $y=1$ line is either $b\left(x\right)=-x^{4}$
and $a\left(x\right)=x^{2}+\left(1+x\right)^{2}$, or $b\left(x\right)=x^{4}$
and $a\left(x\right)=\left(x+1\right)^{2}-x^{2}$. The continued fraction
will eventually be transformed (after the right Mobius action) to
the sums $\sum_{1}^{\infty}\frac{1}{n^{2}}$ and $\sum_{1}^{\infty}\frac{\left(-1\right)^{n}}{n^{2}}$
which are $\zeta\left(2\right)$ and $\frac{1}{2}\zeta\left(2\right)$
respectively.
\item In degree 3, with the action $\bar{g}\left(x,y\right)\mapsto g\left(-x,y\right)$,
we have the family 
\begin{align*}
f\left(x,y\right) & =x^{3}+2x^{2}\left(y-C\right)+2x\left(y-C\right)^{2}+\left(y-C\right)^{3}-\left(x+y-C\right)C^{2}\\
b\left(x\right) & =-x^{2}\left(x-C\right)^{2}\left(x+C\right)^{2}\\
a\left(x,y\right) & =x\left(x-C\right)^{2}+\left(x+1\right)\left(x+1+C\right)^{2}+\left(1+2x\right)\left(y-1-2C\right)2y.
\end{align*}
When $y=1$ we get a continued fraction in the Euler family with $h_{1}\left(x\right)=x\left(x-C\right)^{2}$
and $h_{2}\left(x\right)=x\left(x+C\right)^{2}$. In particular, in
the case where $C=0$ we simply get the matrix field for $\zeta\left(3\right)$
mentioned in \exaref{zeta-3-matrix-field}.
\end{enumerate}
\end{example}

\begin{rem}
Once we have a pair of conjugate polynomials $f,\bar{f}$, there are
several ways to generate more such pairs. Indeed, we already saw the
translation of parameters above, but another simple way is just to
take $cf,c\bar{f}$ for some $0\neq c\in\CC$. Another less trivial
way is to look at the pair $\left(f\left(y,x\right),\;-\bar{f}\left(y,x\right)\right)$.
We shall see in \subsecref{The-dual-matrix-field} how this new pair
is hidden in the same conservative matrix field.
\end{rem}

\medskip{}

\subsection*{Twisting the matrix field}

Right now, while the $M_{X}^{cf}$ matrix has the known continued
fraction form, the $M_{Y}^{cf}$ matrices have this new unknown form
$\left(\begin{smallmatrix}\bar{f}\left(x,y\right) & b\left(x\right)\\
1 & f\left(x,y\right)
\end{smallmatrix}\right)$. However, there are hidden continued fractions in $M_{Y}^{cf}$ as
well, and both $M_{X}^{cf},M_{Y}^{cf}$ are defined very similarly.
For that, we use the following notations.
\begin{notation}
We define:
\[
U_{\alpha}=\left(\begin{smallmatrix}1 & \alpha\\
0 & 1
\end{smallmatrix}\right)\qquad D_{\alpha}=\left(\begin{smallmatrix}\alpha & 0\\
0 & 1
\end{smallmatrix}\right)\qquad\tau=\left(\begin{smallmatrix}0 & 1\\
1 & 0
\end{smallmatrix}\right).
\]
For any matrix $M$, we will write the isomorphism $M\mapsto M^{\tau}=\tau M\tau^{-1}$
(and note that $\tau^{2}=Id$, so that $\tau^{-1}=\tau$). More specifically,
we have that $\left(\begin{smallmatrix}a & b\\
c & c
\end{smallmatrix}\right)^{\tau}=\left(\begin{smallmatrix}d & c\\
b & a
\end{smallmatrix}\right)$ is just switching the rows and switching the columns, and in particular
$U_{\alpha}^{\tau}=U_{\alpha}^{tr}$.
\end{notation}

\medskip{}

With these notations we get:
\begin{alignat*}{2}
M_{X}^{cf}\left(x,y\right) & =\left(\begin{smallmatrix}0 & b\left(x\right)\\
1 & f\left(x,y\right)-\bar{f}\left(x+1,y\right)
\end{smallmatrix}\right) &  & =D_{b_{X}\left(x\right)}\cdot\tau\cdot U_{f\left(x,y\right)}\cdot U_{-\bar{f}\left(x+1,y\right)}\\
M_{Y}^{cf}\left(x,y\right) & =\left(\begin{smallmatrix}\bar{f}\left(x,y\right) & b\left(x\right)\\
1 & f\left(x,y\right)
\end{smallmatrix}\right) &  & =U_{\bar{f}\left(x,y\right)}\cdot D_{-b_{Y}\left(y\right)}\cdot\tau\cdot U_{f\left(x,y\right)},
\end{alignat*}
so that $M_{X}^{cf}$ and $M_{Y}^{cf}$ are ``almost'' the same.
There is some ``cyclic permutation'' and after it they have a similar
structure, with related parameters. In particular the $M_{Y}^{cf}$
is also a continued fraction sequence in disguise.

Already this notation suggests at least two directions to study these
matrix fields. The first, is that there is sort of duality between
the $X$ and $Y$ directions, which we will study in \subsecref{The-dual-matrix-field}.

For the second, recall that we are interested in products like $\prod_{1}^{k}M_{X}\left(i,y\right)$,
though we would like to use only invertible matrices, namely when
$b_{X}\left(x\right)\neq0$. This means, that we might not use $M_{X}\left(0,y\right)=D_{b_{X}\left(0\right)}\cdot\left(\tau\cdot U_{f\left(0,y\right)}\cdot U_{-\bar{f}\left(1,y\right)}\right)$
just because the first factor $D_{b_{X}\left(k\right)}$ is not invertible,
and therefore ``lose'' the information coming from the second factor
\[
\tau\cdot U_{f\left(0,y\right)-\bar{f}\left(1,y\right)}=\begin{pmatrix}0 & 1\\
1 & a\left(0,y\right)
\end{pmatrix}.
\]
Moreover, adding this part at the beginning we get the natural extension
of the continued fraction
\[
\left[\begin{pmatrix}0 & 1\\
1 & a\left(0,y\right)
\end{pmatrix}\cdot\prod_{1}^{k}M_{X}\left(i,y\right)\right]\left(0\right)=\frac{1}{a\left(0,y\right)+\frac{b\left(1\right)}{a\left(1,y\right)+\cfrac{b\left(2\right)}{a\left(2,y\right)+\ddots}}}.
\]

With this in mind, we twist our matrix field to a new form, which
will make the computations later much simpler:
\begin{defn}
Let $f,\bar{f}$ be conjugate polynomials and $M_{X},M_{Y}$ as in
\defref{conjugate}. Define
\begin{align*}
M_{X}\left(x,y\right) & :=D_{b\left(x\right)}^{-1}M_{X}^{cf}\left(x,y\right)D_{b\left(x+1\right)}=\tau U_{f\left(x,y\right)}U_{-\bar{f}\left(x+1,y\right)}D_{b\left(x+1\right)}=\left(\begin{smallmatrix}0 & 1\\
b\left(x+1\right) & f\left(x,y\right)-\bar{f}\left(x+1,y\right)
\end{smallmatrix}\right)\\
M_{Y}\left(x,y\right) & :=D_{b\left(x\right)}^{-1}M_{Y}^{cf}\left(x,y\right)D_{b\left(x\right)}=U_{f\left(x,y\right)}^{\tau}\tau D_{-\left(f\bar{f}\right)\left(0,y\right)}U_{\bar{f}\left(x,y\right)}^{\tau}=\left(\begin{smallmatrix}\bar{f}\left(x,y\right) & 1\\
b\left(x\right) & f\left(x,y\right)
\end{smallmatrix}\right)
\end{align*}
\smallskip{}
\end{defn}

With this new form we rewrite \thmref{matrix-field-structure}:
\begin{thm}
\label{thm:normalized-matrix-field}Let $f,\bar{f},a,b$ be polynomials
as in \defref{conjugate}. We set 
\begin{align*}
M_{X}\left(x,y\right) & :=\tau U_{f\left(x,y\right)}U_{-\bar{f}\left(x+1,y\right)}D_{b\left(x+1\right)}=\left(\begin{smallmatrix}0 & 1\\
b\left(x+1\right) & f\left(x,y\right)-\bar{f}\left(x+1,y\right)
\end{smallmatrix}\right)\\
M_{Y}\left(x,y\right) & :=U_{f\left(x,y\right)}^{\tau}\tau D_{-\left(f\bar{f}\right)\left(0,y\right)}U_{\bar{f}\left(x,y\right)}^{\tau}=\left(\begin{smallmatrix}\bar{f}\left(x,y\right) & 1\\
b\left(x\right) & f\left(x,y\right)
\end{smallmatrix}\right)
\end{align*}
Then
\begin{enumerate}
\item The matrices form a conservative matrix field, namely
\[
M_{X}\left(x,y\right)M_{Y}\left(x+1,y\right)=M_{Y}\left(x,y\right)M_{X}\left(x,y+1\right).
\]
\item The determinants of $M_{X}\left(x,y\right),M_{Y}\left(x,y\right)$
are only functions of $x,y$ respectively, and more specifically:
\begin{align*}
\det\left(M_{X}\left(x,y\right)\right) & =-b_{X}\left(x+1\right)\\
\det\left(M_{Y}\left(x,y\right)\right) & =b_{Y}\left(y\right).
\end{align*}
\item For any $n\geq1,m\geq1$ we have 
\[
\tau U_{a\left(0,1\right)}\left[\prod_{k=1}^{n-1}M_{X}^{cf}\left(k,1\right)\cdot\prod_{k=1}^{m-1}M_{Y}^{cf}\left(n,k\right)\right]\left(0\right)=\left[\prod_{k=0}^{n-1}M_{X}\left(k,1\right)\cdot\prod_{k=1}^{m-1}M_{Y}\left(n,k\right)\right]\left(0\right)
\]
\end{enumerate}
\end{thm}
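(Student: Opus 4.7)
The strategy is to view $M_X,M_Y$ as conjugates of the $cf$-matrices by diagonal factors. Rearranging the defining relations gives
\[
M_X^{cf}(x,y) = D_{b(x)}\, M_X(x,y)\, D_{b(x+1)}^{-1}, \qquad M_Y^{cf}(x,y) = D_{b(x)}\, M_Y(x,y)\, D_{b(x)}^{-1},
\]
and all three statements essentially reduce to bookkeeping with these identities. For part 1, substituting them into the conservativeness equation causes the inner $D_{b(x+1)}^{\pm 1}$ factors on the left and $D_{b(x)}^{\pm 1}$ on the right to collapse, so both sides become $D_{b(x)}^{-1}$ times a product of two $M^{cf}$-matrices times $D_{b(x+1)}$; the claim is then equivalent to conservativeness of the $cf$-field, already proved in \thmref{matrix-field-structure}. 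Part 2 follows by multiplicativity of determinants applied to the same identities: $\det M_X(x,y) = b(x)^{-1}(-b(x))\,b(x+1) = -b_X(x+1)$ and $\det M_Y(x,y) = \det M_Y^{cf}(x,y) = b_Y(y)$.

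For part 3, I would telescope the conjugation identities along each product. For $k \geq 1$ the inner diagonal factors cancel in pairs, yielding
\[
\prod_{k=1}^{n-1} M_X^{cf}(k,1) = D_{b(1)} \left(\prod_{k=1}^{n-1} M_X(k,1)\right) D_{b(n)}^{-1}, \qquad \prod_{k=1}^{m-1} M_Y^{cf}(n,k) = D_{b(n)} \left(\prod_{k=1}^{m-1} M_Y(n,k)\right) D_{b(n)}^{-1}.
\]
Concatenating these, the adjacent $D_{b(n)}^{-1} D_{b(n)}$ cancel, leaving
\[
\prod_{k=1}^{n-1} M_X^{cf}(k,1) \cdot \prod_{k=1}^{m-1} M_Y^{cf}(n,k) = D_{b(1)} \left(\prod_{k=1}^{n-1} M_X(k,1) \cdot \prod_{k=1}^{m-1} M_Y(n,k)\right) D_{b(n)}^{-1}.
\]

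It remains to handle the two boundaries. A direct $2\times 2$ calculation, using $a(0,1) = f(0,1)-\bar{f}(1,1)$, verifies
\[
\tau\, U_{a(0,1)}\, D_{b(1)} = \begin{pmatrix} 0 & 1 \\ b(1) & a(0,1) \end{pmatrix} = M_X(0,1),
\]
which lets us absorb $\tau U_{a(0,1)} D_{b(1)}$ as the $k=0$ factor of the extended product, promoting $\prod_{k=1}^{n-1}M_X(k,1)$ into $\prod_{k=0}^{n-1}M_X(k,1)$. On the other boundary, $D_{b(n)}^{-1}$ is a diagonal M\"obius transformation fixing $0$, so it disappears once we evaluate at $0$. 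Combining these two observations with the telescoping identity yields exactly the claimed equality. There is no genuine obstacle here; the only real content is the choice of twist by $D_{b(x)}$ and the boundary identity for $M_X(0,1)$, after which every step is routine telescoping and M\"obius manipulation.
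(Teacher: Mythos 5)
Your proof is correct and is exactly the intended argument: the paper's own proof is the one-liner ``This follows directly from \thmref{matrix-field-structure}.'' and you have simply written out the bookkeeping details (conjugation by $D_{b(\cdot)}$ making both sides of the conservativeness identity collapse, determinant multiplicativity, telescoping the diagonal twists, and the boundary check $\tau U_{a(0,1)}D_{b(1)}=M_X(0,1)$ together with $D_{b(n)}^{-1}$ fixing $0$) that the paper leaves implicit.
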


\begin{proof}
This follows directly from \thmref{matrix-field-structure} .
\end{proof}

\subsection{\label{subsec:The-dual-matrix-field}The dual conservative matrix
field}

In the previous section we saw that the matrices in our matrix field
are constructed similarly, up to a cyclic permutation:
\begin{align*}
M_{X}\left(x,y\right) & :=\tau U_{f\left(x,y\right)}U_{-\bar{f}\left(x+1,y\right)}D_{b\left(x+1\right)}=\left(\begin{smallmatrix}0 & 1\\
b\left(x+1\right) & f\left(x,y\right)-\bar{f}\left(x+1,y\right)
\end{smallmatrix}\right)\\
M_{Y}\left(x,y\right) & :=U_{f\left(x,y\right)}^{\tau}\tau D_{-\left(f\bar{f}\right)\left(0,y\right)}U_{\bar{f}\left(x,y\right)}^{\tau}=\left(\begin{smallmatrix}\bar{f}\left(x,y\right) & 1\\
b\left(x\right) & f\left(x,y\right)
\end{smallmatrix}\right)
\end{align*}
The next goal is to use this almost symmetry with the hope of eventually
saying something about the diagonal line $X=Y$.
\begin{defn}[\textbf{The dual matrix field}]
Let $f\left(x,y\right),\bar{f}\left(x,y\right)$ be conjugate polynomial,
and let $M_{X},M_{Y}$ be as above. We define the \textbf{dual matrix
field} to be
\begin{align*}
\hat{M}_{Y}\left(y,x\right) & =U_{\bar{f}\left(x-1,y\right)}^{\tau}M_{X}\left(x-1,y+1\right)U_{-\bar{f}\left(x,y\right)}^{\tau}=U_{f\left(x,y\right)}^{\tau}\tau D_{b_{X}\left(x\right)}U_{-\bar{f}\left(x,y\right)}^{\tau}\\
\hat{M}_{X}\left(y,x\right) & =U_{\bar{f}\left(x-1,y\right)}^{\tau}M_{Y}\left(x-1,y+1\right)U_{-\bar{f}\left(x-1,y+1\right)}^{\tau}=\tau U_{f\left(x,y\right)}U_{\bar{f}\left(x,y+1\right)}D_{-b_{Y}\left(y+1\right)}
\end{align*}
This new matrix field corresponds to the conjugate polynomials 
\begin{align*}
\hat{f}\left(x,y\right) & :=f\left(y,x\right)\\
\bar{\hat{f}}\left(x,y\right) & :=-\bar{f}\left(y,x\right)\\
\hat{a}\left(x,y\right) & :=\hat{f}\left(x,y\right)-\bar{\hat{f}}\left(x+1,y\right)=f\left(y,x\right)+\bar{f}\left(y,x+1\right)\\
\hat{b}_{X}\left(x\right) & :=-b_{Y}\left(x\right)
\end{align*}
\end{defn}

\begin{example}
In the $\zeta\left(3\right)$ matrix field mentioned in \exaref{zeta-3-matrix-field}
we have a special case where
\[
f\left(x,y\right)=\frac{y^{3}-x^{3}}{y-x}\left(y+x\right)\qquad;\qquad\bar{f}\left(x,y\right)=\frac{y^{3}+x^{3}}{y+x}\left(y-x\right),
\]
satisfy $f\left(x,y\right)=f\left(y,x\right)$ and $\bar{f}\left(x,y\right)=-\bar{f}\left(y,x\right)$,
so that $\hat{f}=f$ and $\bar{\hat{f}}=\bar{f}$.

In the one of the $\zeta\left(2\right)$ matrix field from \exaref{matrix-field-examples},
we have
\[
f\left(x,y\right)=2x^{2}+2xy+y^{2}\qquad;\qquad\bar{f}\left(x,y\right)=-2x^{2}+2xy-y^{2}
\]
so that 
\[
\hat{f}\left(x,y\right)=x^{2}+2xy+2y^{2}\qquad;\qquad\bar{\hat{f}}\left(x,y\right)=x^{2}-2xy+2y^{2}
\]
and therefore
\begin{align*}
\hat{a}\left(x,y\right) & =\left(x^{2}+2xy+2y^{2}\right)-\left(\left(x+1\right)^{2}-2\left(x+1\right)y+2y^{2}\right)=\left(2y-1\right)\left(2x+1\right)\\
\hat{b}_{X}\left(x\right) & =x^{4}.
\end{align*}
\end{example}

This dual matrix field construction not only gives us free of charge
another conservative matrix field for every one that we find, but
they are also closely related. In the matrix field with $M_{X},M_{Y}$
, the horizontal lines are (almost) polynomial continued fractions
which can be used to study the whole matrix field. By definition,
the horizontal lines of the dual matrix field correspond to vertical
lines in the original matrix field, so to understand the full matrix
field, we would want to understand these two families of  continued
fractions. 

More precisely, since
\[
M_{Y}\left(x,y\right)=U_{-\bar{f}\left(x,y-1\right)}^{\tau}\hat{M}_{X}\left(y-1,x+1\right)U_{\bar{f}\left(x,y\right)}^{\tau},
\]
we get that
\begin{equation}
\prod_{k=1}^{n}M_{Y}\left(y,k\right)=U_{-\bar{f}\left(y,0\right)}^{\tau}\left[\prod_{k=0}^{n-1}\hat{M}_{X}\left(k,y+1\right)\right]U_{\bar{f}\left(y,n\right)}^{\tau}\label{eq:dual-row-column}
\end{equation}

With this dualic structure we turn to study the rational approximations
given by the different points on the matrix field, and more concretely
how far the standard rational presentation is from being a reduced
rational presentation. 
\begin{defn}
\label{def:deno-nume}
\begin{enumerate}
\item For every pair of integers $n\geq0,\;m\geq1$ we let 
\[
\left(\begin{smallmatrix}P\left(n,m\right)\\
Q\left(n,m\right)
\end{smallmatrix}\right):=\left[\prod_{k=1}^{m-1}M_{Y}\left(0,k\right)\right]\left[\prod_{k=0}^{n-1}M_{X}\left(k,m\right)\right]e_{2}.
\]
\item For every $n\geq0$ define the polynomial vectors
\begin{align*}
\left(\begin{smallmatrix}p_{n}\left(y\right)\\
q_{n}\left(y\right)
\end{smallmatrix}\right) & =\left[\prod_{0}^{n-1}M_{X}\left(k,y\right)\right]e_{2}\\
\left(\begin{smallmatrix}\hat{p}_{n}\left(y\right)\\
\hat{q}_{n}\left(y\right)
\end{smallmatrix}\right) & =\left[\prod_{0}^{n-1}\hat{M}_{X}\left(k,y\right)\right]e_{2}.
\end{align*}
\end{enumerate}
In general we are interested in the function $\frac{P\left(n,m\right)}{Q\left(n,m\right)}$,
and its limit as $n,m\to\infty$. The numerators and denominators
$p_{n}\left(y\right),q_{n}\left(y\right)$ of the continued fractions
for a given line (and their duals), will be help us study that function. 
\end{defn}

For example, the first few values of $p_{n}\left(m\right),q_{n}\left(m\right)$
are arranged as :

\begin{align*}
\xyR{1pc}\xyC{1pc}\xymatrix{\left(*\right) &  &  & \left(*\right) &  &  & \left(*\right) &  &  & \left(*\right)\\
\\
{\left(\begin{smallmatrix}p_{0}\left(3\right)\\
q_{0}\left(3\right)
\end{smallmatrix}\right)}\ar[rrr]_{M_{X}\left(0,3\right)}\ar@{..>}[uu]^{M_{Y}\left(0,3\right)} &  &  & {\left(\begin{smallmatrix}p_{1}\left(3\right)\\
q_{1}\left(3\right)
\end{smallmatrix}\right)}\ar[rrr]_{M_{X}\left(1,3\right)}\ar@{..>}[uu]^{M_{Y}\left(1,3\right)} &  &  & {\left(\begin{smallmatrix}p_{2}\left(3\right)\\
q_{2}\left(3\right)
\end{smallmatrix}\right)}\ar[rrr]_{M_{X}\left(2,3\right)}\ar@{..>}[uu]^{M_{Y}\left(2,3\right)} &  &  & \left(*\right)\\
\\
{\left(\begin{smallmatrix}p_{0}\left(2\right)\\
q_{0}\left(2\right)
\end{smallmatrix}\right)}\ar[rrr]_{M_{X}\left(0,2\right)}\ar@{..>}[uu]^{M_{Y}\left(0,2\right)} &  &  & {\left(\begin{smallmatrix}p_{1}\left(2\right)\\
q_{1}\left(2\right)
\end{smallmatrix}\right)}\ar[rrr]_{M_{X}\left(1,2\right)}\ar@{..>}[uu]^{M_{Y}\left(1,2\right)} &  &  & {\left(\begin{smallmatrix}p_{2}\left(2\right)\\
q_{2}\left(2\right)
\end{smallmatrix}\right)}\ar[rrr]_{M_{X}\left(2,2\right)}\ar@{..>}[uu]^{M_{Y}\left(2,2\right)} &  &  & \left(*\right)\\
\\
{\left(\begin{smallmatrix}p_{0}\left(1\right)\\
q_{0}\left(1\right)
\end{smallmatrix}\right)}\ar[rrr]_{M_{X}\left(0,1\right)}\ar@{..>}[uu]^{M_{Y}\left(0,1\right)} &  &  & {\left(\begin{smallmatrix}p_{1}\left(1\right)\\
q_{1}\left(1\right)
\end{smallmatrix}\right)}\ar[rrr]_{M_{X}\left(1,1\right)}\ar@{..>}[uu]^{M_{Y}\left(1,1\right)} &  &  & {\left(\begin{smallmatrix}p_{2}\left(1\right)\\
q_{2}\left(1\right)
\end{smallmatrix}\right)}\ar[rrr]_{M_{X}\left(2,1\right)}\ar@{..>}[uu]^{M_{Y}\left(2,1\right)} &  &  & \left(*\right)
}
\end{align*}

\begin{rem}
Note that since $M_{X}\left(k,y\right)e_{1}=b\left(k+1\right)e_{2}$
and $\hat{M}_{X}\left(k,y\right)e_{1}=-b_{Y}\left(k\right)$, we have
for $n\geq1$
\begin{align*}
\left(\begin{smallmatrix}p_{n-1}\left(y\right) & p_{n}\left(y\right)\\
q_{n-1}\left(y\right) & q_{n}\left(y\right)
\end{smallmatrix}\right)D_{b_{X}\left(n\right)} & =\prod_{0}^{n-1}M_{X}\left(k,y\right)\\
\left(\begin{smallmatrix}\hat{p}_{n-1}\left(y\right) & \hat{p}_{n}\left(y\right)\\
\hat{q}_{n-1}\left(y\right) & \hat{q}_{n}\left(y\right)
\end{smallmatrix}\right)D_{-b_{Y}\left(n\right)} & =\prod_{0}^{n-1}\hat{M}_{X}\left(k,y\right).
\end{align*}
\end{rem}

\newpage{}

We now turn to connect between $P\left(n,m\right),Q\left(n,m\right)$
and $p_{n}\left(y\right),q_{n}\left(y\right),\hat{p}_{n}\left(y\right),\hat{q}_{n}\left(y\right)$,
so that eventually we can use this connection to study the limits
of $\frac{P\left(n,m\right)}{Q\left(n,m\right)}$ as $n,m\to\infty$.
\begin{claim}
\label{claim:Additive_form}Let $f,\bar{f}\in\ZZ\left[x,y\right]$
be conjugate polynomials.
\end{claim}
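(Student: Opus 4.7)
\noindent\textbf{Proof proposal for Claim \ref{claim:Additive_form}.} The plan is to exploit the two defining features of the conservative matrix field: the path-independence that lets us factor the potential $S(n,m)$ in either order, and the duality identity (\ref{eq:dual-row-column}) which rewrites a column product $\prod M_{Y}$ in terms of a row product $\prod \hat{M}_{X}$ of the dual matrix field, up to a conjugation by the unipotent matrices $U_{\bar f}^{\tau}$. Because $U_{\alpha}^{\tau}$ acts additively on the second entry of a column vector, these boundary conjugations are precisely what will turn the formula for $(P(n,m),Q(n,m))^{\mathrm{tr}}$ into an \emph{additive} (rather than purely multiplicative) expression in the building blocks $p_n(y), q_n(y), \hat p_n(y), \hat q_n(y)$.

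Concretely, I would start from the definition
\[
\begin{pmatrix}P(n,m)\\ Q(n,m)\end{pmatrix}=\Bigl[\prod_{k=1}^{m-1}M_{Y}(0,k)\Bigr]\Bigl[\prod_{k=0}^{n-1}M_{X}(k,m)\Bigr]e_{2},
\]
and replace the horizontal block using the remark following Definition \ref{def:deno-nume}, so that $\prod_{k=0}^{n-1}M_{X}(k,m)\,e_{2}=\bigl(p_{n}(m),q_{n}(m)\bigr)^{\mathrm{tr}}\cdot b_{X}(n)$ (the column picked out of $(\,\cdot\,)\,D_{b_{X}(n)}$). Then apply (\ref{eq:dual-row-command}) — that is, equation (\ref{eq:dual-row-column}) — with $y=0$ to rewrite $\prod_{k=1}^{m-1}M_{Y}(0,k)$ as $U_{-\bar f(0,0)}^{\tau}\bigl[\prod_{k=0}^{m-2}\hat M_{X}(k,1)\bigr]U_{\bar f(0,m-1)}^{\tau}$. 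After collapsing the inner product via the dual analogue $\prod_{0}^{m-2}\hat M_{X}(k,1)=\bigl(\begin{smallmatrix}\hat p_{m-2}(1) & \hat p_{m-1}(1)\\ \hat q_{m-2}(1) & \hat q_{m-1}(1)\end{smallmatrix}\bigr)D_{-b_{Y}(m-1)}$, the product telescopes into the sum of two terms: one carrying the $p_{n}(m),q_{n}(m)$ factors (from the main row) and one carrying the $\hat p_{m-1}(1),\hat q_{m-1}(1)$ factors (from the climbing column), with the cross terms supplied by the $U_{\bar f}^{\tau}$ conjugations.

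The same computation can be repeated along the other path $\bigl[\prod_{k=0}^{n-1}M_{X}(k,1)\bigr]\bigl[\prod_{k=1}^{m-1}M_{Y}(n,k)\bigr]e_{2}$, giving a parallel expression with the roles of the original and dual building blocks swapped (now it is $\hat p_{m-1}(n+1),\hat q_{m-1}(n+1)$ paired against $p_{n}(1),q_{n}(1)$). Path-independence forces both expressions to agree, which both proves the additive identity and gives a nontrivial relation between the two families. The key algebraic input beyond path-independence is the elementary fact that $U_{\alpha}^{\tau}\bigl(\begin{smallmatrix}u\\ v\end{smallmatrix}\bigr)=\bigl(\begin{smallmatrix}u\\ v+\alpha u\end{smallmatrix}\bigr)$, so conjugation-then-application of $U_{\bar f}^{\tau}$ splits the vector into a ``main'' and a ``correction'' contribution, yielding the additive form.

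The routine part is matrix multiplication; the main obstacle is bookkeeping the index shifts consistently — in particular keeping track of the two different $y$-arguments that appear ($y=m$ for $p_n,q_n$ and $y=1$ or $y=n+1$ for $\hat p, \hat q$), of the boundary unipotent factors at $k=0$ and $k=m-1$, and of the diagonal factors $D_{b_{X}(n)}, D_{-b_{Y}(m-1)}$ that arise from the remark. Once these shifts are aligned, the identity is forced by a direct expansion of a $2\times 2$ matrix times a column vector, with no further analytic input required.
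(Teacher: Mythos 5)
Your overall strategy for part (2) of the claim is the right one and parallels the paper's: exploit path-independence to write $(P(n,m),Q(n,m))^{\mathrm{tr}}$ along two monotone staircases from the origin, and use the duality identity (\ref{eq:dual-row-column}) to turn each vertical block into a dual horizontal block so that the column vector splits into a ``main'' part (carrying $p_n,q_n$ or $\hat p_m,\hat q_m$) plus a ``correction'' part contributed by the conjugating unipotents. But there are three gaps that keep this from being a complete proof.

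First, you never prove (or even state) part~(1) of the claim — the telescoping identity
\[
U_{\bar f(0,0)}^{\tau}\Big[\prod_{0}^{n-1}M_X(i,1)\Big]U_{-\bar f(n,0)}^{\tau}
=\prod_{1}^{n}\begin{pmatrix}-\bar f(i,0)&1\\0&f(i,0)\end{pmatrix}.
\]
This is not decoration: it is a separate assertion of the claim, and it is also a hard prerequisite for the second display in part~(2), which needs the explicit upper-triangular form of the row-$1$ product (with $q_n(1)=\prod_1^n f(k,0)$ along the diagonal). The paper proves it by a direct two-line computation showing that the \emph{linear condition} together with the hypothesis $b_X(x)=(f\bar f)(x,0)$ forces $U_{\bar f(x,0)}^{\tau}M_X(x,1)U_{-\bar f(x+1,0)}^{\tau}$ to be upper triangular with entries $-\bar f(x+1,0),\,1,\,0,\,f(x+1,0)$, after which the product telescopes. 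Without this step, your route of ``collapsing'' $\prod_{0}^{m-2}\hat M_X(k,1)$ via the remark gives you a matrix of $\hat p,\hat q$'s but not the triangular form with explicit diagonal entries that the statement requires.

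Second, you do not record the observation that $b_X(0)=b_Y(0)=(f\bar f)(0,0)=0$, which the paper derives from the quadratic condition together with the two hypotheses on $b_X,b_Y$. This is precisely what makes $M_Y(0,k)=\bigl(\begin{smallmatrix}\bar f(0,k)&1\\0&f(0,k)\end{smallmatrix}\bigr)$ upper triangular and lets the left-hand formula in part~(2) come out with a $0$ in the bottom-left and explicit products $\prod\bar f(0,k)$, $\prod f(0,k)$ on the diagonal. Your plan keeps the boundary unipotents and the diagonal factor $D_{-b_Y(m-1)}$ explicit but never reduces them, so the final ``additive'' form is not reached.

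Third, a concrete bookkeeping mistake: by Definition~\ref{def:deno-nume} one has $\prod_{k=0}^{n-1}M_X(k,m)\,e_2=(p_n(m),q_n(m))^{\mathrm{tr}}$ with \emph{no} extra factor of $b_X(n)$; the $D_{b_X(n)}$ in the remark acts trivially on $e_2$. Carrying the spurious $b_X(n)$ through would produce an identity off by that factor. The remaining index-shift issues you flag are real but routine once parts (1) and the $b_X(0)=0$ observation are in place.
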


\begin{enumerate}
\item \label{enu:first-line-polynomials}If $b_{X}\left(x\right)=\fbf\left(x,0\right)$,
then
\[
U_{\bar{f}\left(0,0\right)}^{\tau}\left[\prod_{0}^{n-1}M_{X}\left(i,1\right)\right]U_{-\bar{f}\left(n,0\right)}^{\tau}=\prod_{1}^{n}\left(\begin{smallmatrix}-\bar{f}\left(i,0\right) & 1\\
0 & f\left(i,0\right)
\end{smallmatrix}\right).
\]
In particular we get that 
\[
\begin{pmatrix}p_{1}\left(n\right)\\
q_{1}\left(n\right)
\end{pmatrix}=U_{-\bar{f}\left(0,0\right)}^{\tau}\prod_{1}^{n}\left(\begin{smallmatrix}-\bar{f}\left(i,0\right) & 1\\
0 & f\left(i,0\right)
\end{smallmatrix}\right)e_{2}.
\]
\item \label{enu:reciprocal-polynomials}If $b_{X}\left(x\right)=\fbf\left(x,0\right)$
and $b_{Y}\left(y\right)=\fbf\left(0,y\right)$, then 
\[
\begin{pmatrix}P\left(n,m+1\right)\\
Q\left(n,m+1\right)
\end{pmatrix}=\left[\left(\begin{smallmatrix}\prod_{k=1}^{m}\bar{f}\left(0,k\right) & \hat{p}_{m}\left(1\right)\\
0 & \prod_{k=1}^{m}f\left(0,k\right)
\end{smallmatrix}\right)\right]\left(\begin{smallmatrix}p_{n}\left(m+1\right)\\
q_{n}\left(m+1\right)
\end{smallmatrix}\right)=U_{-\bar{f}\left(0,0\right)}^{\tau}\left(\begin{smallmatrix}\prod_{1}^{n}\left(-\bar{f}\right)\left(k,0\right) & p_{n}\left(1\right)\\
0 & \prod_{1}^{n}f\left(k,0\right)
\end{smallmatrix}\right)\left(\begin{smallmatrix}\hat{p}_{m}\left(n+1\right)\\
\hat{q}_{m}\left(n+1\right)
\end{smallmatrix}\right).
\]
In particular we have {\small{}
\[
\frac{P\left(n,m+1\right)}{Q\left(n,m+1\right)}=\prod_{k=1}^{m}\frac{\bar{f}\left(0,k\right)}{f\left(0,k\right)}\cdot\frac{p_{n}\left(m+1\right)}{q_{n}\left(m+1\right)}+\frac{\hat{p}_{m}\left(1\right)}{\prod_{k=1}^{m}f\left(0,k\right)}=U_{-\bar{f}\left(0,0\right)}^{\tau}\left(\left(-1\right)^{n}\prod_{k=1}^{n}\frac{\bar{f}\left(k,0\right)}{f\left(k,0\right)}\cdot\frac{\hat{p}_{m}\left(n+1\right)}{\hat{q}_{m}\left(n+1\right)}+\frac{p_{n}\left(1\right)}{\prod_{k=1}^{n}f\left(k,0\right)}\right).
\]
}{\small\par}
\end{enumerate}
\begin{proof}
\begin{enumerate}
\item Given that $b_{X}\left(x\right)=\fbf\left(x,0\right)$ we have that
\begin{align*}
U_{\bar{f}\left(x,0\right)}^{\tau}M_{X}\left(x,1\right)U_{-\bar{f}\left(x+1,0\right)}^{\tau} & =\left(\begin{smallmatrix}1 & 0\\
\bar{f}\left(x,0\right) & 1
\end{smallmatrix}\right)\left(\begin{smallmatrix}0 & 1\\
\left(f\bar{f}\right)\left(x+1,0\right) & f\left(x+1,0\right)-\bar{f}\left(x,0\right)
\end{smallmatrix}\right)\left(\begin{smallmatrix}1 & 0\\
-\bar{f}\left(x+1,0\right) & 1
\end{smallmatrix}\right)=\left(\begin{smallmatrix}-\bar{f}\left(x+1,0\right) & 1\\
0 & f\left(x+1,0\right)
\end{smallmatrix}\right),
\end{align*}
implying that 
\[
U_{\bar{f}\left(0,0\right)}^{\tau}\left[\prod_{0}^{n-1}M_{X}\left(i,1\right)\right]U_{-\bar{f}\left(n,0\right)}^{\tau}=\prod_{1}^{n}\left(\begin{smallmatrix}-\bar{f}\left(i,0\right) & 1\\
0 & f\left(i,0\right)
\end{smallmatrix}\right).
\]
Using the fact that $U_{-\bar{f}\left(n,0\right)}^{\tau}e_{2}=e_{2}$,
we conclude part (1) of this claim.
\item We compute $\frac{P\left(n,m+1\right)}{Q\left(n,m+1\right)}$ in two
different ways - first by moving in the matrix field along the $X=0$
line and then $Y=m+1$ line, and second by moving along the $Y=1$
line and then the $X=n$ line, namely 
\begin{equation}
\prod_{1}^{m}M_{Y}\left(0,k\right)\prod_{0}^{n-1}M_{X}\left(k,m+1\right)e_{2}=\prod_{0}^{n-1}M_{X}\left(k,1\right)\prod_{1}^{m}M_{Y}\left(n,k\right)e_{2}.\label{eq:two_path_potential}
\end{equation}
Dealing first with the left hand side, by definition we have that
\[
\prod_{0}^{n-1}M_{X}\left(k,m+1\right)e_{2}=\left(\begin{smallmatrix}p_{n}\left(m+1\right)\\
q_{n}\left(m+1\right)
\end{smallmatrix}\right).
\]
We now claim that 
\[
\prod_{1}^{m}M_{Y}\left(0,k\right)=\begin{pmatrix}\prod_{1}^{m}\bar{f}\left(0,k\right) & \hat{p}_{m}\left(1\right)\\
0 & \prod_{1}^{m}f\left(0,k\right)
\end{pmatrix}.
\]
To show that, note first that the condition on $b_{X},b_{Y}$ together
with the quadratic condition of conjugate polynomials implies that
\[
b_{X}\left(x\right)+b_{Y}\left(y\right)=\fbf\left(x,y\right)=\fbf\left(x,0\right)+\fbf\left(0,y\right)-\fbf\left(0,0\right)=b_{X}\left(x\right)+b_{Y}\left(y\right)-\fbf\left(0,0\right).
\]
It then follows that $b_{X}\left(0\right)=b_{Y}\left(0\right)=\fbf\left(0,0\right)=0$,
and therefore $M_{Y}\left(0,k\right)=\left(\begin{smallmatrix}\bar{f}\left(0,k\right) & 1\\
0 & f\left(0,k\right)
\end{smallmatrix}\right)$ is upper triangular. Thus, in the product $\prod_{1}^{m}M_{Y}\left(0,k\right)$,
the main issue is to find the upper right coordinate. For that, we
use \eqref{dual-row-column} to obtain 
\[
e_{1}^{tr}\prod_{1}^{m}M_{Y}\left(0,k\right)e_{2}=e_{1}^{tr}U_{-\bar{f}\left(0,0\right)}^{\tau}\left[\prod_{k=0}^{m-1}\hat{M}_{X}\left(k,1\right)\right]U_{\bar{f}\left(0,m\right)}^{\tau}e_{2}=e_{1}^{tr}\left[\prod_{k=0}^{m-1}\hat{M}_{X}\left(k,1\right)\right]e_{2}=\hat{p}_{m}\left(1\right),
\]
which produces the left side of the equation in \eqref{two_path_potential}.
\\
For the right hand side, we use \eqref{dual-row-column} and part
(1) of this claim to obtain
\begin{align*}
\left[\prod_{0}^{n-1}M_{X}\left(k,1\right)\right]\left[\prod_{1}^{m}M_{Y}\left(n,k\right)\right]e_{2} & =\left[U_{-\bar{f}\left(0,0\right)}^{\tau}\prod_{1}^{n}\left(\begin{smallmatrix}-\bar{f}\left(k,0\right) & 1\\
0 & f\left(k,0\right)
\end{smallmatrix}\right)\right]\cdot\left[\prod_{k=0}^{m-1}\hat{M}_{X}\left(k,n+1\right)U_{\bar{f}\left(y,m\right)}^{\tau}\right]e_{2}\\
 & =U_{-\bar{f}\left(0,0\right)}^{\tau}\prod_{1}^{n}\left(\begin{smallmatrix}-\bar{f}\left(i,0\right) & 1\\
0 & f\left(i,0\right)
\end{smallmatrix}\right)\left(\begin{smallmatrix}\hat{p}_{m}\left(n+1\right)\\
\hat{q}_{m}\left(n+1\right)
\end{smallmatrix}\right)
\end{align*}
Similarly to the previous case, and using part (1) of this claim again,
we conclude that the last expression equals to
\[
U_{-\bar{f}\left(0,0\right)}^{\tau}\left(\begin{smallmatrix}\left(-1\right)^{n}\prod_{1}^{n}\bar{f}\left(i,0\right) & p_{n}\left(1\right)\\
0 & \prod_{1}^{n}f\left(i,0\right)
\end{smallmatrix}\right)\left(\begin{smallmatrix}\hat{p}_{m}\left(n+1\right)\\
\hat{q}_{m}\left(n+1\right)
\end{smallmatrix}\right),
\]
thus completing the proof.
\end{enumerate}
\end{proof}
\begin{rem}
In the case where $\bar{f}\left(0,0\right)=0$, we get that $U_{f\left(0,0\right)}=Id$,
making the results above clearer. In particular, part (1) shows that
$q_{n}\left(1\right)=\prod_{1}^{n}f\left(k,0\right)$, and then part
(2) implies that 
\[
Q\left(n,m+1\right)=\hat{q}_{m}\left(1\right)q_{n}\left(m+1\right)=q_{n}\left(1\right)\hat{q}_{m}\left(n+1\right).
\]
This will become more helpful later (see \lemref{q-n-lemma}).
\end{rem}

The last result is already enough to show some properties of the limit
of $\frac{P\left(n,m+1\right)}{Q\left(n,m+1\right)}$ as $n\to\infty$.
\begin{thm}
\label{thm:limit_on_each_line}Let $f,\bar{f}\in\ZZ\left[x,y\right]$
be conjugate polynomials such that $b_{X}\left(x\right)=\fbf\left(x,0\right)$
and $b_{Y}\left(y\right)=\fbf\left(0,y\right)$, and set $d=\deg_{y}\left(\hat{a}\left(x,y\right)\right)=\deg_{y}\left(f\left(y,x\right)+\bar{f}\left(y,x+1\right)\right)$.
If $\left|\prod_{k=1}^{n}\frac{\bar{f}\left(k,0\right)}{f\left(k,0\right)}\right|=o\left(n^{d}\right)$,
then 
\[
L\left(m\right)=\limfi n\frac{P\left(n,m\right)}{Q\left(n,m\right)}
\]
is independent of $m$.
\end{thm}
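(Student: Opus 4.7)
The plan is to take the second identity from Claim \ref{claim:Additive_form}(2),
\[
\frac{P(n,m+1)}{Q(n,m+1)} = U_{-\bar{f}(0,0)}^{\tau}\!\left((-1)^{n}\prod_{k=1}^{n}\frac{\bar{f}(k,0)}{f(k,0)}\cdot\frac{\hat{p}_{m}(n+1)}{\hat{q}_{m}(n+1)}+\frac{p_{n}(1)}{\prod_{k=1}^{n}f(k,0)}\right),
\]
and show that the first summand inside $U_{-\bar{f}(0,0)}^{\tau}$ vanishes as $n\to\infty$ uniformly (in a suitable sense) in $m$. Since the second summand does not involve $m$ and $U_{-\bar{f}(0,0)}^{\tau}$ is continuous, this forces $L(m+1)$ to equal a quantity independent of $m$.

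First I would analyze the $y$-degrees of the dual polynomials $\hat{p}_m(y),\hat{q}_m(y)$. By definition they satisfy the three-term recursion coming from $\hat{M}_X(k,y)=\left(\begin{smallmatrix}0 & 1\\ -b_Y(k+1) & \hat{a}(k,y)\end{smallmatrix}\right)$, namely
\[
\hat{q}_{n+1}(y)=\hat{a}(n,y)\,\hat{q}_{n}(y)-b_Y(n)\,\hat{q}_{n-1}(y),\qquad \hat{p}_{n+1}(y)=\hat{a}(n,y)\,\hat{p}_{n}(y)-b_Y(n)\,\hat{p}_{n-1}(y),
\]
with $\hat{p}_0=0,\hat{p}_1=1,\hat{q}_0=1,\hat{q}_1=\hat{a}(0,y)$. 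Since $\deg_y\hat{a}(x,y)=d$ and $b_Y$ is constant in $y$, induction on $m$ gives $\deg_y\hat{q}_m=md$ and $\deg_y\hat{p}_m=(m-1)d$, and hence
\[
\frac{\hat{p}_m(y)}{\hat{q}_m(y)}=O\!\left(y^{-d}\right)\qquad(y\to\infty).
\]

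Combining this with the hypothesis $\left|\prod_{k=1}^{n}\frac{\bar{f}(k,0)}{f(k,0)}\right|=o(n^{d})$ yields
\[
\left|(-1)^{n}\prod_{k=1}^{n}\frac{\bar{f}(k,0)}{f(k,0)}\cdot\frac{\hat{p}_m(n+1)}{\hat{q}_m(n+1)}\right|=o(n^{d})\cdot O(n^{-d})=o(1),
\]
so this term tends to $0$ as $n\to\infty$, and the convergence is independent of $m$ (once the degree bounds are established). Applying the continuous Möbius action $U_{-\bar{f}(0,0)}^{\tau}$ gives
\[
L(m+1)=U_{-\bar{f}(0,0)}^{\tau}\!\left(\lim_{n\to\infty}\frac{p_n(1)}{\prod_{k=1}^{n}f(k,0)}\right),
\]
where the right-hand side does not involve $m$, proving that $L(m)$ is constant.

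The main technical obstacle is the degree analysis in Step~1: although $\deg_y\hat{a}(x,y)=d$ as a bivariate polynomial, its $y$-leading coefficient is a polynomial in $x$ that could conceivably vanish at individual integer values $x=n$, which would drop the degree of $\hat{q}_m$ below $md$. One has to verify that these specializations are generically nonzero and, if necessary, replace the clean bound $\frac{\hat{p}_m(y)}{\hat{q}_m(y)}=O(y^{-d})$ by a uniform bound $|\hat{p}_m(n+1)/\hat{q}_m(n+1)|\ll n^{-d}$ valid for $n$ large (with constants depending on $m$, which is acceptable since we only take $n\to\infty$ for each fixed $m$). Everything else is a direct application of Claim \ref{claim:Additive_form} and continuity of the Möbius transformation.
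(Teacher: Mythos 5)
Your proof takes essentially the same route as the paper: you use the identity from part~(\ref{enu:reciprocal-polynomials}) of Claim~\ref{claim:Additive_form}, show by induction on the dual three-term recursion that $\deg_{y}\hat{p}_{m}=(m-1)d$ and $\deg_{y}\hat{q}_{m}=md$, deduce $\hat{p}_{m}(n+1)/\hat{q}_{m}(n+1)=O(n^{-d})$, and multiply against the $o(n^{d})$ hypothesis to kill the $m$-dependent term. This is precisely the paper's argument.

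The ``technical obstacle'' you flag is genuine and, in fact, the paper glosses over it: the theorem only assumes $d=\deg_{y}\hat{a}(x,y)$ as a bivariate polynomial, whereas the inductive degree count needs $\deg_{y}\hat{a}(k,y)=d$ for each of the finitely many integers $k=0,\dots,m-1$ that appear in the product $\prod_{k=0}^{m-1}\hat{M}_{X}(k,y)$. If the $y$-leading coefficient of $\hat{a}(x,y)$ (a polynomial in $x$) vanished at some such $k$, then \emph{both} $\deg_{y}\hat{q}_{m}$ and $\deg_{y}\hat{p}_{m}$ would drop by the same amount at that step, and the gap $\deg_{y}\hat{q}_{m}-\deg_{y}\hat{p}_{m}$ would be strictly less than $d$; this is a degree loss, not a constant loss, so your suggested fix of allowing $m$-dependent constants does not by itself repair it. In the examples the paper actually uses (e.g.\ the $\zeta(3)$ field, where $\hat{a}(k,y)$ has leading $y$-coefficient $2(2k+1)\neq 0$) this never occurs, but a fully rigorous statement should include the implicit hypothesis that $\deg_{y}\hat{a}(k,y)=d$ for all integers $k\geq 0$; with that added, your proof and the paper's are both complete and identical in strategy.
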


\begin{proof}
Using part \enuref{reciprocal-polynomials} in \claimref{dual-field-identities},
we obtain the equality 
\begin{align*}
U_{\bar{f}\left(0,0\right)}^{\tau}\left(\frac{P\left(n,m\right)}{Q\left(n,m\right)}\right) & =\frac{\hat{p}_{m}\left(n+1\right)}{\hat{q}_{m}\left(n+1\right)}\cdot\left(-1\right)^{n}\prod_{k=1}^{n}\frac{\bar{f}\left(k,0\right)}{f\left(k,0\right)}+\frac{p_{n}\left(1\right)}{\prod_{k=1}^{m}f\left(k,0\right)}.
\end{align*}
Under the assumption that $\left|\prod_{k=1}^{n}\frac{\bar{f}\left(k,0\right)}{f\left(k,0\right)}\right|=o\left(n^{d}\right)$,
it is enough to show that $\limfi n\frac{\hat{p}_{m}\left(n+1\right)}{\hat{q}_{m}\left(n+1\right)}=O\left(\frac{1}{n^{d}}\right)$
for all $m$, and this will follow if we can show that $\deg\left(\hat{p}_{m}\right)+d\leq\deg\left(\hat{q}_{m}\right)$
for all $m$.

Indeed, by definition we have that 
\[
\left(\begin{smallmatrix}\hat{p}_{m}\left(y\right)\\
\hat{q}_{m}\left(y\right)
\end{smallmatrix}\right)=\left[\prod_{0}^{m-1}\hat{M}_{X}\left(k,y\right)\right]e_{2}=\left[\prod_{k=0}^{m-1}\begin{pmatrix}0 & 1\\
-\left(f\bar{f}\right)\left(0,k+1\right) & f\left(y,k\right)+\bar{f}\left(y,k+1\right)
\end{pmatrix}\right]e_{2}.
\]
Under the assumption  that $d=\deg_{y}\left(f\left(y,k\right)+\bar{f}\left(y,k+1\right)\right)$,
a simple induction shows 
\[
\deg_{y}\left(\hat{p}_{m}\left(y\right)\right)=d\left(m-1\right)<dm=\deg_{y}\left(\hat{q}_{m}\left(y\right)\right),
\]
which completes the proof.
\end{proof}
\begin{rem}
Note that $\left|\prod_{k=1}^{n}\frac{\bar{f}\left(k,0\right)}{f\left(k,0\right)}\right|=o\left(n^{d}\right)$
automatically holds (resp. fails) if $\limfi x\left|\frac{\bar{f}\left(x,0\right)}{f\left(x,0\right)}\right|<1$
(resp. $>1$). Thus, to fully understand the asymptotic, we only need
to understand the case where $\limfi x\left|\frac{\bar{f}\left(x,0\right)}{f\left(x,0\right)}\right|=1$.
Since we only use the absolute value of the ratio, we may assume that
both $f,\bar{f}$ are monic of the same degree.
\end{rem}

\begin{claim}
Let $F\left(x\right),\bar{F}\left(x\right)$ be two distinct monic
real polynomials of the same degree.
\begin{align*}
F\left(x\right) & =x^{m}+\sum_{0}^{m-1}a_{i}x^{i}\\
\bar{F}\left(x\right) & =x^{m}+\sum_{0}^{m-1}\bar{a}_{i}x^{i}.
\end{align*}
\begin{enumerate}
\item If $a_{m-1}=\overline{a}_{m-1}$, then $\left|\prod_{1}^{n}\frac{\overline{F}\left(k\right)}{F\left(k\right)}\right|=O\left(1\right)$
is bounded.
\item If $a_{m-1}\neq\overline{a}_{m-1}$, then for any $\varepsilon>0$
we have that 
\[
\left|\prod_{1}^{n}\frac{\overline{F}\left(k\right)}{F\left(k\right)}\right|=O\left(n^{\bar{a}_{m-1}-a_{m-1}+\varepsilon}\right).
\]
In particular, if $\overline{a}_{m-1}-a_{m-1}<d$, then $\left|\prod_{1}^{n}\frac{\overline{F}\left(k\right)}{F\left(k\right)}\right|=o\left(n^{d}\right)$.
\end{enumerate}
\end{claim}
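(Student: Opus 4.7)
The plan is to pass to logarithms and reduce the product to a single sum that is straightforward to estimate. For $k$ larger than all real zeros of both $F$ and $\bar{F}$, both polynomials are positive (they are monic of the same degree), so $\bar{F}(k)/F(k)>0$ and $\ln\!\left|\bar{F}(k)/F(k)\right|$ is well-defined. Any finitely many problematic terms at the start of the product contribute only a bounded multiplicative constant that can be absorbed into the $O$-constant.

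For large $k$, I would write $\bar{F}(k)/F(k) = 1 + (\bar{F}(k)-F(k))/F(k)$. Since $F(k) = k^{m}(1+O(1/k))$ and $\bar{F}(k)-F(k) = (\bar{a}_{m-1}-a_{m-1})k^{m-1} + O(k^{m-2})$, this gives
\[
\frac{\bar{F}(k)-F(k)}{F(k)} = \frac{\bar{a}_{m-1}-a_{m-1}}{k} + O\!\left(\frac{1}{k^{2}}\right),
\]
and applying $\ln(1+u) = u + O(u^{2})$ for small $u$ yields
\[
\ln\!\left|\frac{\bar{F}(k)}{F(k)}\right| = \frac{\bar{a}_{m-1}-a_{m-1}}{k} + O\!\left(\frac{1}{k^{2}}\right).
\]

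In case (1), where $\bar{a}_{m-1}=a_{m-1}$, each summand is $O(1/k^{2})$, so the partial sums of $\ln\!\left|\bar{F}(k)/F(k)\right|$ converge absolutely as $n\to\infty$. Exponentiating shows $\left|\prod_{k=1}^{n}\bar{F}(k)/F(k)\right| = O(1)$. In case (2), summing from $k=1$ to $n$ and invoking the harmonic asymptotic $H_{n} = \ln n + \gamma + O(1/n)$ gives
\[
\sum_{k=1}^{n}\ln\!\left|\frac{\bar{F}(k)}{F(k)}\right| = (\bar{a}_{m-1}-a_{m-1})\ln n + O(1),
\]
so exponentiating yields the sharp estimate $\Theta(n^{\bar{a}_{m-1}-a_{m-1}})$, which in particular implies the $O(n^{\bar{a}_{m-1}-a_{m-1}+\varepsilon})$ bound stated in the claim for any $\varepsilon>0$.

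The only real obstacle is cosmetic: the product could be ill-defined or change signs at integer roots of $F$ or $\bar{F}$. However, since both polynomials have only finitely many real roots and are eventually positive, these issues affect only an initial segment whose contribution is a bounded constant and does not disturb the asymptotic conclusion.
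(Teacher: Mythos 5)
Your proposal is correct and follows essentially the same route the paper takes: reduce to a log--sum after absorbing a bounded initial segment, expand $\bar{F}(k)/F(k)$ around $1$, and estimate the resulting series via the harmonic sum. The paper carries out the expansion slightly more explicitly (factoring out the leading term $(\bar{a}_j-a_j)/x^{m-j}$ and a factor $A(x)\to1$ rather than using Big-O), but the idea is identical, and your streamlined estimate even yields the sharper $\Theta\bigl(n^{\bar{a}_{m-1}-a_{m-1}}\bigr)$ in case (2) where the paper states only the $\varepsilon$-weakened $O$-bound.
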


\begin{proof}
We first note that we may assume that $\frac{\overline{F}\left(x\right)}{F\left(x\right)}>0$
for all $x\geq0$. Indeed, since $\limfi x\frac{\overline{F}\left(x\right)}{F\left(x\right)}=1$,
we can choose $N$ large enough such that $G\left(x\right):=F\left(x+N\right),\;\overline{G}\left(x\right):=F\left(x+N\right)$
satisfy $\frac{\overline{G}\left(x\right)}{G\left(x\right)}>0$ for
all $x\geq0$. Moving from $F,\bar{F}$ to $G,\bar{G}$ doesn't change
the difference of $\overline{a}_{m-1}-a_{m-1}$, and since $\left|\prod_{1}^{n}\frac{\overline{F}\left(k\right)}{F\left(k\right)}\right|=\left|\prod_{1}^{N}\frac{\overline{F}\left(k\right)}{F\left(k\right)}\right|\left|\prod_{1}^{n-N}\frac{\overline{G}\left(k\right)}{G\left(k\right)}\right|$,
the asymptotic of $\left|\prod_{1}^{n}\frac{\overline{G}\left(k\right)}{G\left(k\right)}\right|$
gives the needed upper bound on the asymptotics of $\left|\prod_{1}^{n}\frac{\overline{F}\left(k\right)}{F\left(k\right)}\right|$.

Now, with this assumption that $\frac{\overline{F}\left(x\right)}{F\left(x\right)}>0$
for all $x\geq0$, we can look at the well defined expression $\ln\left(\frac{\overline{F}\left(x\right)}{F\left(x\right)}\right)$
for $x>0$. Let $j$ be the largest index such that $a_{j}\neq\overline{a}_{j}$,
and define $A\left(x\right)$ so that
\[
\frac{\bar{F}\left(x\right)}{F\left(x\right)}-1=\frac{\sum_{0}^{j}\left(\bar{a}_{i}-a_{i}\right)x^{i}}{x^{m}+\sum_{0}^{m-1}a_{i}x^{i}}=\frac{\left(\bar{a}_{j}-a_{j}\right)}{x^{m-j}}\overbrace{\left(\frac{1+\sum_{i=0}^{j-1}\frac{\left(\bar{a}_{i}-a_{i}\right)}{\left(\bar{a}_{j}-a_{j}\right)}x^{i-j}}{1+\sum_{i=0}^{m-1}a_{i}x^{i-m}}\right)}^{A\left(x\right)}.
\]
It follows that $\limfi xA\left(x\right)=1$, and
\begin{align*}
\ln\left(\prod_{k=1}^{n}\frac{\overline{F}\left(k\right)}{F\left(k\right)}\right)=\sum_{k=1}^{n}\ln\left(\frac{\bar{F}\left(x\right)}{F\left(x\right)}\right) & =\sum_{k=1}^{n}\ln\left(1+\frac{\left(\bar{a}_{j}-a_{j}\right)A\left(k\right)}{k^{m-j}}\right)\leq\left(\bar{a}_{j}-a_{j}\right)\sum_{k=1}^{n}\frac{A\left(k\right)}{k^{m-j}}.
\end{align*}
Using the fact that $\limfi xA\left(x\right)=1$, we see that for
any $\varepsilon'>0$, we can find some constant $C_{\varepsilon'}$
such that 
\[
-C_{\varepsilon'}+\left(1-\varepsilon'\right)\sum_{k=1}^{n}\frac{1}{k^{m-j}}\leq\sum_{k=1}^{n}\frac{A\left(k\right)}{k^{m-j}}\leq C_{\varepsilon'}+\left(1+\varepsilon'\right)\sum_{k=1}^{n}\frac{1}{k^{m-j}}.
\]
If $m-j\geq2$, then both the upper and lower bounds converge to a
finite limit, so that sequence in the middle is bounded, and therefore
so is $\prod_{k=1}^{n}\frac{\overline{F}\left(k\right)}{F\left(k\right)}$.

If $m-j=1$, then by changing the harmonic sum $\sum_{k=1}^{n}\frac{1}{k}$
into $\ln\left(n\right)$, and adjusting the constant accordingly,
we get that 
\[
-C_{\varepsilon'}+\left(1-\varepsilon'\right)\ln\left(n\right)\leq\sum_{k=1}^{n}\frac{A\left(k\right)}{k}\leq C_{\varepsilon'}+\left(1+\varepsilon'\right)\ln\left(n\right).
\]
Multiplying the inequalities above by $\left|\overline{a}_{m-1}-a_{m-1}\right|$
we get that 
\[
-C_{\varepsilon}+\left(\left|\overline{a}_{m-1}-a_{m-1}\right|-\varepsilon\right)\ln\left(n\right)\leq\left|\overline{a}_{m-1}-a_{m-1}\right|\sum_{k=1}^{n}\frac{A\left(k\right)}{k^{m-j}}\leq C_{\varepsilon}+\left(\left|\overline{a}_{m-1}-a_{m-1}\right|+\varepsilon\right)\ln\left(n\right),
\]
where $C_{\varepsilon}=\left|\overline{a}_{m-1}-a_{m-1}\right|C_{\varepsilon'}$
and $\varepsilon=\varepsilon'\cdot\left|\overline{a}_{m-1}-a_{m-1}\right|$.
It then follows that 
\[
\prod_{k=1}^{n}\frac{\overline{F}\left(k\right)}{F\left(k\right)}\leq\exp\left(\left(\bar{a}_{j}-a_{j}\right)\sum_{k=1}^{n}\frac{A\left(k\right)}{k^{m-j}}\right)\leq e^{C_{\varepsilon}}n^{\bar{a}_{j}-a_{j}+\varepsilon},
\]
which completes the proof.
\end{proof}
\begin{example}
Consider two of the examples from \exaref{matrix-field-examples}
(and in \secref{The-z3-case} we will apply this for $\zeta\left(3\right)$).
\begin{enumerate}
\item For $f\left(x,y\right)=x+y$ and $\bar{f}\left(x,y\right)=x-y$ the
limit on the line $y=1$ was $\frac{1-\ln\left(2\right)}{\ln\left(2\right)}$.
Since $\frac{\bar{f}\left(k,0\right)}{f\left(k,0\right)}=1$ for all
$k\geq1$ and 
\[
d=\deg_{y}\left(f\left(y,x\right)+\bar{f}\left(y,x+1\right)\right)=\deg_{y}\left(y+x+y-x-1\right)=\deg_{y}\left(2y-1\right)=1,
\]
the conditions of the last theorem holds, so that $\limfi n\frac{P\left(n,m\right)}{Q\left(n,m\right)}=\frac{1-\ln\left(2\right)}{\ln\left(2\right)}$
for all $m$ on the corresponding matrix field.
\item Taking $f\left(x,y\right)=x+y$ and $\bar{f}\left(x,y\right)=1$,
we have that $\left|\frac{\bar{f}\left(k,0\right)}{f\left(k,0\right)}\right|=\frac{1}{k}=o\left(1\right)$
which is automatically $o\left(n^{d}\right)$ for all $d$. Thus $\limfi n\frac{P\left(n,m\right)}{Q\left(n,m\right)}=\limfi n\frac{P\left(n,1\right)}{Q\left(n,1\right)}=\frac{1}{e-1}$.
\end{enumerate}
\end{example}

On \thmref{limit_on_each_line} we only considered the limits $\limfi n\frac{P\left(n,m\right)}{Q\left(n,m\right)}$
for fixed $m$. The general case is much more complicated, and it
is not clear if there is a simple condition to show convergence. In
\secref{The-z3-case}, we will prove for the matrix field of $\zeta\left(3\right)$
that the limit is always $\zeta\left(3\right)$ no matter the path
chosen. For that we will need the following:
\begin{claim}
\label{claim:dual-field-identities}Let $f,\bar{f}\in\ZZ\left[x,y\right]$
be conjugate polynomials and let $p_{n},q_{n}$ as in \defref{deno-nume}
above. Then
\begin{enumerate}
\item \label{enu:Increase-n}\textbf{\uline{Horizontal lines}}\textbf{:
}When increasing $n$ we get 3-term recurrence
\begin{align*}
\left(\begin{smallmatrix}p_{n+1}\left(y\right)\\
q_{n+1}\left(y\right)
\end{smallmatrix}\right) & =\left(\begin{smallmatrix}p_{n-1}\left(y\right) & p_{n}\left(y\right)\\
q_{n-1}\left(y\right) & q_{n}\left(y\right)
\end{smallmatrix}\right)\left(\begin{smallmatrix}b_{X}\left(n\right)\\
a\left(n,y\right)
\end{smallmatrix}\right).
\end{align*}
\item \label{enu:increase-m}\textbf{\uline{Vertical lines}}\textbf{:}
When $\left(f\bar{f}\right)\left(0,m\right)\neq0$, increasing $m$
follows the recurrence 
\begin{align*}
\left(\begin{smallmatrix}p_{n}\left(m+1\right)\\
q_{n}\left(m+1\right)
\end{smallmatrix}\right) & =\frac{1}{\left(f\bar{f}\right)\left(0,m\right)}\left(\begin{smallmatrix}f\left(0,m\right) & -1\\
0 & \bar{f}\left(0,m\right)
\end{smallmatrix}\right)\left(\begin{smallmatrix}p_{n-1}\left(m\right) & p_{n}\left(m\right)\\
q_{n-1}\left(m\right) & q_{n}\left(m\right)
\end{smallmatrix}\right)\left(\begin{smallmatrix}b_{X}\left(n\right)\\
f\left(n,m\right)
\end{smallmatrix}\right)
\end{align*}
\end{enumerate}
\end{claim}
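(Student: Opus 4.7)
The plan is to handle both parts by pushing one more step of the potential matrix and then multiplying on the right by $e_2$, using the identity
\[
\left(\begin{smallmatrix}p_{n-1}(y) & p_n(y)\\ q_{n-1}(y) & q_n(y)\end{smallmatrix}\right) D_{b_X(n)} = \prod_{k=0}^{n-1} M_X(k,y)
\]
from the remark preceding the claim, together with the conservativeness relation of Theorem \ref{thm:normalized-matrix-field}.

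For part \enuref{Increase-n}, I would simply write
\[
\left(\begin{smallmatrix}p_{n+1}(y)\\ q_{n+1}(y)\end{smallmatrix}\right) = \left[\prod_{k=0}^{n-1} M_X(k,y)\right] M_X(n,y)\, e_2,
\]
observe that $M_X(n,y)\,e_2 = \left(\begin{smallmatrix}1\\ a(n,y)\end{smallmatrix}\right)$ from the definition of $M_X$ and of $a=f-\bar f$, and then use the remark to replace the left product by $\left(\begin{smallmatrix}p_{n-1}(y) & p_n(y)\\ q_{n-1}(y) & q_n(y)\end{smallmatrix}\right) D_{b_X(n)}$. Multiplying $D_{b_X(n)}$ into the column gives exactly $\left(\begin{smallmatrix}b_X(n)\\ a(n,y)\end{smallmatrix}\right)$, which finishes part \enuref{Increase-n}.

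For part \enuref{increase-m} the idea is to use conservativeness to cross from row $m$ to row $m+1$. Applying the conservativeness relation inductively along the bottom row $y=m$ and up the column $x=n$ yields
\[
M_Y(0,m) \prod_{k=0}^{n-1} M_X(k,m+1) = \prod_{k=0}^{n-1} M_X(k,m)\, M_Y(n,m),
\]
and multiplying on the right by $e_2$ gives
\[
M_Y(0,m) \left(\begin{smallmatrix}p_n(m+1)\\ q_n(m+1)\end{smallmatrix}\right) = \left[\prod_{k=0}^{n-1} M_X(k,m)\right] \left(\begin{smallmatrix}1\\ f(n,m)\end{smallmatrix}\right) = \left(\begin{smallmatrix}p_{n-1}(m) & p_n(m)\\ q_{n-1}(m) & q_n(m)\end{smallmatrix}\right) \left(\begin{smallmatrix}b_X(n)\\ f(n,m)\end{smallmatrix}\right),
\]
where in the last step I again use $M_Y(n,m)e_2 = \left(\begin{smallmatrix}1\\ f(n,m)\end{smallmatrix}\right)$ and the remark. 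All that is left is to invert $M_Y(0,m) = \left(\begin{smallmatrix}\bar f(0,m) & 1\\ b_X(0) & f(0,m)\end{smallmatrix}\right)$, whose determinant equals $b_Y(m) = (f\bar f)(0,m) - b_X(0)$ by the quadratic condition; to obtain the target formula the entry $b_X(0)$ must vanish, so the inverse becomes $\frac{1}{(f\bar f)(0,m)} \left(\begin{smallmatrix}f(0,m) & -1\\ 0 & \bar f(0,m)\end{smallmatrix}\right)$.

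The only genuine subtlety, and hence the main thing to check carefully, is that the statement tacitly assumes the normalization $b_X(0)=0$ (so that $b_Y(m)=(f\bar f)(0,m)$ and $M_Y(0,m)$ is upper triangular); this is the same normalization used in part \enuref{reciprocal-polynomials} of Claim \ref{claim:Additive_form}, where it was shown to follow from $b_X(x)=(f\bar f)(x,0)$ and $b_Y(y)=(f\bar f)(0,y)$. Once this is noted, both parts reduce to the single observation above, and the rest is a direct $2\times2$ matrix computation requiring no further ideas.
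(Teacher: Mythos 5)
Your proposal is correct and takes essentially the same route as the paper: part \enuref{Increase-n} is read off from $\prod_{0}^{n-1}M_X(k,y)=\left(\begin{smallmatrix}p_{n-1}(y)&p_n(y)\\q_{n-1}(y)&q_n(y)\end{smallmatrix}\right)D_{b_X(n)}$, and part \enuref{increase-m} from the telescoped conservativeness identity $M_Y(0,m)\prod_{0}^{n-1}M_X(k,m+1)=\prod_{0}^{n-1}M_X(k,m)\,M_Y(n,m)$ followed by inverting $M_Y(0,m)$.

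One thing worth singling out: your observation that the stated formula tacitly assumes $b_X(0)=0$ is a genuine point, and one that the paper's own proof elides — it silently writes $M_Y(0,m)=\left(\begin{smallmatrix}\bar f(0,m)&1\\0&f(0,m)\end{smallmatrix}\right)$ without justifying the zero in the lower-left corner. As you note, this normalization is forced once one chooses $b_X(x)=\fbf(x,0)$ and $b_Y(y)=\fbf(0,y)$, since then $\fbf(0,0)=b_X(0)+b_Y(0)=2\fbf(0,0)$ gives $\fbf(0,0)=0$; without that choice the correct inverse should use $\det M_Y(0,m)=b_Y(m)=\fbf(0,m)-b_X(0)$ rather than $\fbf(0,m)$. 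Flagging this as a hypothesis of the claim (or noting it is inherited from the ambient normalization used throughout \secref{The-z3-case}) makes the statement cleaner than the paper's version.
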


\begin{proof}
\begin{enumerate}
\item Follows directly from the equation $\prod_{0}^{n}M_{X}^{cf}\left(k,y\right)=\left(\begin{smallmatrix}p_{n-1}\left(m\right) & p_{n}\left(m\right)\\
q_{n-1}\left(m\right) & q_{n}\left(m\right)
\end{smallmatrix}\right)$ in \lemref{gcf-recursion}. 
\item This follows from the conservativeness condition of the matrix field
structure
\begin{align*}
\left(\begin{smallmatrix}p_{n}\left(m+1\right)\\
q_{n}\left(m+1\right)
\end{smallmatrix}\right) & =\left[\prod_{0}^{n-1}M_{X}\left(k,m+1\right)\right]e_{2}=M_{Y}\left(0,m\right)^{-1}\left[\prod_{0}^{n-1}M_{X}\left(k,m\right)\right]M_{Y}\left(n,m\right)e_{2}\\
 & =\left(\begin{smallmatrix}\bar{f}\left(0,m\right) & 1\\
0 & f\left(0,m\right)
\end{smallmatrix}\right)^{-1}\left(\begin{smallmatrix}p_{n-1}\left(m\right) & p_{n}\left(m\right)\\
q_{n-1}\left(m\right) & q_{n}\left(m\right)
\end{smallmatrix}\right)D_{b_{X}\left(n\right)}\cdot\left(\begin{smallmatrix}1\\
f\left(n,m\right)
\end{smallmatrix}\right)\\
 & =\frac{1}{\left(f\bar{f}\right)\left(0,m\right)}\left(\begin{smallmatrix}f\left(0,m\right) & -1\\
0 & \bar{f}\left(0,m\right)
\end{smallmatrix}\right)\left(\begin{smallmatrix}p_{n-1}\left(m\right) & p_{n}\left(m\right)\\
q_{n-1}\left(m\right) & q_{n}\left(m\right)
\end{smallmatrix}\right)\left(\begin{smallmatrix}b_{X}\left(n\right)\\
f\left(n,m\right)
\end{smallmatrix}\right)
\end{align*}
\end{enumerate}
\end{proof}

\newpage{}

\section{\label{sec:The-z3-case}The $\zeta\left(3\right)$ case}

We now apply the dual matrix field identities for the $\zeta\left(3\right)$
matrix field. Recall that in this case we have that 
\begin{align*}
f\left(x,y\right) & =\frac{y^{3}-x^{3}}{y-x}\left(y+x\right)=y^{3}+2y^{2}x+2yx^{2}+x^{3}\\
\bar{f}\left(x,y\right) & =\frac{y^{3}+x^{3}}{y+x}\left(y-x\right)=y^{3}-2y^{2}x+2yx^{2}-x^{3}\\
a\left(x,y\right) & =x^{3}+\left(1+x\right)^{3}+2y\left(y-1\right)\left(2x+1\right)\\
\fbf\left(x,y\right) & =y^{6}-x^{6}\\
b_{X}\left(x\right) & =-x^{6},\quad b_{Y}\left(y\right)=y^{6}
\end{align*}

Since $b_{X}\left(x\right)=\fbf\left(x,0\right)$, the bottom line
of this matrix field can be easily converted to infinite sum via Euler
method and more specifically part \enuref{first-line-polynomials}
in \claimref{Additive_form} shows that 
\begin{equation}
\begin{pmatrix}p_{1}\left(n\right)\\
q_{1}\left(n\right)
\end{pmatrix}=\prod_{1}^{n}\left(\begin{smallmatrix}k^{3} & 1\\
0 & k^{3}
\end{smallmatrix}\right)e_{2}=\left(\begin{smallmatrix}\left(n!\right)^{3} & \left(n!\right)^{3}\sum_{k=1}^{n}\frac{1}{k^{3}}\\
0 & \left(n!\right)^{3}
\end{smallmatrix}\right)e_{2}\quad\Rightarrow\quad\frac{p_{1}\left(n\right)}{q_{1}\left(n\right)}=\sum_{k=1}^{n}\frac{1}{k^{3}}.\label{eq:zeta_3_bottom_line}
\end{equation}

Our first goal is showing that $\gcd\left(q_{n}\left(m\right),p_{n}\left(m\right)\right)$
is large as $n$ and $m$ increase, and we will use the results from
\claimref{Additive_form} and \claimref{dual-field-identities}. Once
we understand these polynomials and their greatest common divisor,
which are defined for each row separately, we will combine them together
to understand the general numerators and denominators appearing in
any route on the matrix field, starting at the bottom left corner.
In particular, investigating the diagonal route, we will show that
both the approximations converge fast enough, and the greatest common
divisor grows fast enough to conclude at the end that $\zeta\left(3\right)$
is irrational.

This $\zeta\left(3\right)$ matrix field has several properties making
it easier to work with, which will come into play later:
\begin{fact}
\label{fact:zeta_3}
\begin{enumerate}
\item The matrix field is its own dual, since $f\left(y,x\right)=f\left(x,y\right)$
and $-\bar{f}\left(y,x\right)=\bar{f}\left(x,y\right)$. In particular
we get that $p=\hat{p}$ and $q=\hat{q}$.
\item We have that $b_{X}\left(x\right)=\fbf\left(x,0\right)$, $b_{Y}\left(y\right)=\fbf\left(0,y\right)$,
and $f\left(0,0\right)=\bar{f}\left(0,0\right)=0$.
\item All the $f\left(n,0\right)=f\left(0,n\right)=\bar{f}\left(0,n\right)=-\bar{f}\left(n,0\right)=n^{3}$
are the same up to a sign (and therefore also $\hat{f}$ and $\bar{\hat{f}}$).
Furthermore, they all divide $f\left(n,n\right)=\hat{f}\left(n,n\right)=6n^{3}$.
\item We can write $a\left(x,y\right)$ as $a\left(x,y\right)=A_{1}\left(x\right)+y\left(y-1\right)A_{2}\left(x\right)$,
so in particular $a\left(x,1-y\right)=a\left(x,y\right)$.
\end{enumerate}
\end{fact}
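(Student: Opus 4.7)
The plan is to verify each of the four assertions by direct substitution into the explicit polynomials
\[
f(x,y)=y^3+2y^2x+2yx^2+x^3,\qquad \bar{f}(x,y)=y^3-2y^2x+2yx^2-x^3,
\]
\[
a(x,y)=x^3+(1+x)^3+2y(y-1)(2x+1),\qquad (f\bar{f})(x,y)=y^6-x^6,
\]
listed at the top of the section. Each item is a routine polynomial identity, so there is no serious obstacle; the only thing to be careful about is correctly unpacking the definitions of $\hat{f}$, $\bar{\hat{f}}$, $\hat{p}_n$, $\hat{q}_n$ from the previous section.

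For item (1), I would swap variables monomial by monomial. In $f$, the four monomials $y^3,2y^2x,2yx^2,x^3$ map under $(x,y)\leftrightarrow(y,x)$ to $x^3,2x^2y,2xy^2,y^3$, which is the same set, so $f(y,x)=f(x,y)$ and hence $\hat{f}=f$. In $\bar{f}$ the corresponding monomials $y^3,-2y^2x,2yx^2,-x^3$ map to $x^3,-2x^2y,2xy^2,-y^3$, which equals $-\bar{f}(x,y)$; thus $\bar{\hat{f}}(x,y):=-\bar{f}(y,x)=\bar{f}(x,y)$. Since $\hat{M}_X$ is built from $\hat{f},\bar{\hat{f}}$ exactly as $M_X$ is built from $f,\bar{f}$, the defining products in Definition~\ref{def:deno-nume} agree, giving $p_n=\hat{p}_n$ and $q_n=\hat{q}_n$.

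For item (2), I would just compute: $(f\bar{f})(x,0)=0-x^6=b_X(x)$ and $(f\bar{f})(0,y)=y^6-0=b_Y(y)$, while $f(0,0)=\bar{f}(0,0)=0$ because every monomial of $f$ and of $\bar{f}$ contains at least one of $x,y$. For item (3), setting $y=0$ in $f$ kills every term except $x^3$, giving $f(n,0)=n^3$; setting $x=0$ in $f$ leaves $y^3$, giving $f(0,n)=n^3$; identically $\bar{f}(0,n)=n^3$, while $\bar{f}(n,0)=-x^3\big|_{x=n}=-n^3$. Evaluating $f(n,n)=n^3+2n^3+2n^3+n^3=6n^3$, and by (1) the same holds for $\hat{f}(n,n)$. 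Divisibility $n^3\mid 6n^3$ is trivial.

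For item (4), the decomposition is immediate from the given formula: take $A_1(x)=x^3+(1+x)^3$ and $A_2(x)=2(2x+1)$, so that $a(x,y)=A_1(x)+y(y-1)A_2(x)$. The identity $a(x,1-y)=a(x,y)$ then reduces to $(1-y)(-y)=y(y-1)$, which is obvious. So the whole fact is a checklist of substitutions, collected here for later reference in the subsequent analysis of the $\zeta(3)$ matrix field.
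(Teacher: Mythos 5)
Your verification is correct and is exactly the kind of direct substitution the paper has in mind (the paper states this as a \emph{Fact} with no accompanying proof, so these are meant as straightforward checks against the explicit polynomials). One small point worth tightening in item (1): equality of $\hat{M}_X$ with $M_X$ also requires $\hat{b}_X=b_X$, i.e.\ $-b_Y(x)=b_X(x)$, not just $\hat{f}=f$ and $\bar{\hat{f}}=\bar{f}$; this holds here because $-b_Y(x)=-x^6=b_X(x)$, which is really a consequence of your item (2), so you may want to note that dependence explicitly rather than leaving the matrix equality to follow ``exactly'' from the polynomial symmetry of $f$ and $\bar f$ alone.
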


Next, we want to show that $\gcd\left(q_{n}\left(m\right),p_{n}\left(m\right)\right)$,
is almost divisible by $\left(n!\right)^{3}$. We already know that
fixing $m$ and only increasing $n$, namely running on horizontal
lines in the matrix field, we get ``nice'' continued fractions which
should have factorial reduction. The next lemma shows that these factorial
reductions are in a sense synchronized between the different horizontal
lines.

In the following, we will use \textbf{$\lcm\left[n\right]$ }for $\lcm\left\{ 1,2,...,n\right\} $
where $n\geq1$ and also set $\lcm\left[0\right]=1$.
\begin{lem}
\label{lem:q-n-lemma}For all $n\geq0$ and $m\in\ZZ$ we have $\left(n!\right)^{3}\mid q_{n}\left(m\right)$
(with equality for $m=1$) and $\left(\frac{n!}{\lcm\left[n\right]}\right)^{3}\mid p_{n}\left(m\right)$.
In particular we have that $\left(\frac{n!}{\lcm\left[n\right]}\right)^{3}\mid\gcd\left(p_{n}\left(m\right),q_{n}\left(m\right)\right)$.
\end{lem}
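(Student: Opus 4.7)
We prove both divisibilities by induction on $m\ge 1$; the extension to $m\in\ZZ$ follows from the symmetry $q_n(y)=q_n(1-y)$, $p_n(y)=p_n(1-y)$ implied by \factref{zeta_3}(4) together with the horizontal recurrence and initial conditions. For the base $m=1$, apply \claimref{Additive_form}(\enuref{first-line-polynomials}) to the $\zeta(3)$ data $\bar f(0,0)=0$, $f(i,0)=-\bar f(i,0)=i^3$ so that $U_{-\bar f(0,0)}^\tau=I$, giving
\[
\binom{p_n(1)}{q_n(1)}=\prod_{i=1}^{n}\begin{pmatrix}i^3 & 1\\ 0 & i^3\end{pmatrix}e_2=\binom{(n!)^3\sum_{k=1}^n 1/k^3}{(n!)^3}.
\]
Hence $q_n(1)=(n!)^3$ (with equality), and $p_n(1)=\sum_{k=1}^n(n!/k)^3=(n!/\lcm[n])^3\sum_k(\lcm[n]/k)^3$ is divisible by $(n!/\lcm[n])^3$ since each $\lcm[n]/k\in\ZZ$.

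For the inductive step, \claimref{dual-field-identities}(\enuref{increase-m}) in the $\zeta(3)$ case (where $(f\bar f)(0,m)=m^6$ and $f(0,m)=\bar f(0,m)=m^3$) yields the vertical recurrences
\[
m^3 q_n(m+1)=-n^6 q_{n-1}(m)+f(n,m)q_n(m),\qquad m^3 p_n(m+1)=-n^6 p_{n-1}(m)+f(n,m)p_n(m)-q_n(m+1).
\]
Writing $q_k(m)=(k!)^3\tilde q_k$ (an integer by IH) and using $n^6((n-1)!)^3=n^3(n!)^3$, the first equation rearranges to $m^3 q_n(m+1)=(n!)^3\bigl[f(n,m)\tilde q_n-n^3\tilde q_{n-1}\bigr]$, so $(n!)^3\mid q_n(m+1)$ reduces to the mod-$m^3$ congruence
\[
f(n,m)\tilde q_n\equiv n^3\tilde q_{n-1}\pmod{m^3}.
\]
An analogous reduction handles $(n!/\lcm[n])^3\mid p_n(m+1)$, using $\lcm[n-1]\mid n\lcm[n]$ to see that $(n!/\lcm[n])^3$ divides $n^6((n-1)!/\lcm[n-1])^3$, and feeding in the $q$-divisibility just established for the $-q_n(m+1)$ term.

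To close the congruence we invoke the symmetric identity of \claimref{Additive_form}(\enuref{reciprocal-polynomials}), which in the $\zeta(3)$ matrix field reads $(m-1)!^3 q_n(m)=(n!)^3 q_{m-1}(n+1)$ because $\hat q=q$ by \factref{zeta_3}(1); the IH then gives the reflection $\tilde q_n(m)=\tilde q_{m-1}(n+1)$. Substituting this, the required congruence becomes a relation between values of $q_{m-1}$ at two consecutive integer arguments $n+1$ and $n+2$, which the horizontal recurrence \claimref{dual-field-identities}(\enuref{Increase-n}) for $q_{m-1}(y)$ verifies modulo $m^3$ using only the explicit shape $f(n,m)\equiv n^3+2n^2m+2nm^2\pmod{m^3}$ and the analogous expansion of $a$. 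The main obstacle is exactly this mod-$m^3$ congruence: when a prime $p\le n$ divides $m$, the factor $(n!)^3$ cannot absorb the $m^3$, so mere integrality of $q_n(m+1)$ is insufficient---the symmetric identity is the precise mechanism that breaks the apparent circularity of the induction by exchanging the indices $(n,m)\leftrightarrow(m-1,n+1)$ and making the horizontal recurrence applicable to the congruence.
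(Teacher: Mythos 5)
Your base case is correct, and your derivation of the vertical recurrence
\[
m^3 q_n(m+1) = -n^6 q_{n-1}(m) + f(n,m)\,q_n(m)
\]
and the ensuing reduction to the congruence $m^3 \mid f(n,m)\tilde q_n - n^3\tilde q_{n-1}$ is a legitimate and correct reformulation. However, your overall induction structure has a genuine gap that the appeal to the symmetric identity does not fix. You induct on $m$, so your hypothesis controls $q_k(m')$ only for second arguments $m'\le m$. To ``reflect'' via the identity $(m!)^3\,q_n(m+1)=(n!)^3\,q_m(n+1)$ and conclude something about the right-hand side, you need to already know that $(m!)^3\mid q_m(n+1)$; but when $n+1>m$ this lies outside the range of your hypothesis. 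The identity relates two points with the \emph{same} index-sum $n+m+1$, so neither single induction on $m$ nor induction on $n+m$ dominates one side by the other --- the apparent circularity is real, not merely apparent, and at the critical moment you are assuming what you must prove. The subsequent assertion that the horizontal recurrence then ``verifies mod $m^3$'' is also not carried out, and given the difficulty of this factorial-reduction lemma (it is the technical heart of Ap\'ery-type proofs) it cannot be waved through.

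The paper avoids exactly this trap by inducting on $n$, not $m$, with a \emph{strong} hypothesis covering all second arguments: assume $(k!)^3\mid q_k(m)$ for all $k\le n-1$ and all $m\in\ZZ$. Then for $1\le m\le n$ the symmetric identity $q_n(m+1)/q_n(1)=q_m(n+1)/q_m(1)$ is immediately usable because $m\le n-1$ brings $q_m(n+1)$ under the hypothesis; one further application of the vertical recurrence at the corner $(n,n)\to(n,n+1)$ extends the range to $m=n+1$, after which $q_n(y)=q_n(1-y)$ gives $2n+2$ consecutive integer values $-n\le m\le n+1$, and the degree bound $\deg_y q_n\le 2n$ together with the integer-valued-polynomial lemma (\appref{integer-values}) finishes all $m$. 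If you want to keep your induction on $m\ge 1$ (which would pleasantly avoid the integer-valued-polynomial lemma, as the symmetry then handles $m\le 0$ directly), you would need an entirely different mechanism to control $q_m(n+1)$ for $n+1>m$ without already having the claim; no such mechanism is supplied.
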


\begin{proof}
We will prove this claim by induction on $n$, but before that, as
we mentioned above, for the $m=1$ case (the bottom horizontal line)
we get that 
\begin{align*}
p_{n}\left(1\right) & =\sum_{1}^{n}\left(\frac{n!}{k}\right)^{3}\\
q_{n}\left(1\right) & =\prod_{1}^{n}f\left(k,0\right)=\left(n!\right)^{3}.
\end{align*}
These are exactly the numerator and denominator of $\sum_{1}^{n}\frac{1}{k^{3}}$
when taking the product of the denominators. Since we can also instead
take the least common multiple of the denominators, we see that $\left(\frac{n!}{\lcm\left[n\right]}\right)^{3}\mid p_{n}\left(1\right)$
as required. Of course the $\left(n!\right)^{3}\mid q_{n}\left(1\right)$
is trivial since $\left(n!\right)^{3}=q_{n}\left(1\right)$, but more
over it allows us to think of the general conditions as $q_{n}\left(1\right)\mid q_{n}\left(m\right)$
and $q_{1}\left(n\right)\mid\lcm\left[n\right]^{3}p_{n}\left(m\right)$.\\

We prove the rest of this lemma using induction on $n$. The induction
hypothesis will go as follows - assuming that the claim is true for
$\left(n-1,m\right)$ for a given $n$ and all $m\in\ZZ$, we show:
\begin{enumerate}
\item From part \enuref{reciprocal-polynomials} in \claimref{Additive_form},
we show that the claim is true for $\left(n,m\right)$ with $1\leq m\leq n$.
\item From part \enuref{increase-m} in \claimref{dual-field-identities},
if the claim is true for $\left(n-1,n\right)$ and $\left(n,n\right)$,
then it is true for $\left(n,n+1\right)$.
\item Our polynomials satisfy $q_{n}\left(y\right)=q_{n}\left(1-y\right)$
and $p_{n}\left(y\right)=p_{n}\left(1-y\right)$, so the claim is
true for $\left(n,m\right)$ with $-n\leq m\leq n+1$, which are $2\left(n+1\right)$
consecutive integers.
\item These polynomials have degree $\leq2n+1$, so this is enough to show
the claim for $\left(n,m\right)$ for all $m$.
\end{enumerate}

When $n=0$ we have $q_{0}\left(m\right)\equiv1$ and $p_{0}\left(m\right)\equiv0$
which are divisible by $\left(0!\right)^{3}=1$ and $\frac{0!}{\lcm\left[0\right]}=1$
respectively.

Suppose now that the claim is true for $\left(k,m\right)$ with $k\leq n-1$
and all $m$ and we prove for $\left(n,m\right)$ and all $m$. We
prove first for the denominators, which is easier.\\

\textbf{\uline{Denominators:}}

By using identities from \claimref{Additive_form}, together with
the facts in \factref{zeta_3} about the matrix field we get 
\[
\left(\begin{smallmatrix}q_{m}\left(1\right) & p_{m}\left(1\right)\\
0 & q_{m}\left(1\right)
\end{smallmatrix}\right)\left(\begin{smallmatrix}p_{n}\left(m+1\right)\\
q_{n}\left(m+1\right)
\end{smallmatrix}\right)=\left(\begin{smallmatrix}q_{n}\left(1\right) & p_{n}\left(1\right)\\
0 & q_{n}\left(1\right)
\end{smallmatrix}\right)\left(\begin{smallmatrix}p_{m}\left(n+1\right)\\
q_{m}\left(n+1\right)
\end{smallmatrix}\right).
\]
For the denominators, this implies that 
\[
\frac{q_{n}\left(m+1\right)}{q_{n}\left(1\right)}=\frac{q_{m}\left(n+1\right)}{q_{m}\left(1\right)}.
\]
By the induction hypothesis, for $0\leq m\leq n-1$ the right hand
of this equation is an integer, so that $q_{n}\left(1\right)\mid q_{n}\left(m+1\right)$.
Using part \enuref{increase-m} in \claimref{dual-field-identities}
with $n=m$ we have

\begin{align*}
\left(\begin{smallmatrix}p_{n}\left(n+1\right)\\
q_{n}\left(n+1\right)
\end{smallmatrix}\right) & =\frac{1}{\left(f\bar{f}\right)\left(0,n\right)}\left(\begin{smallmatrix}f\left(0,n\right) & -1\\
0 & \bar{f}\left(0,n\right)
\end{smallmatrix}\right)\left(\begin{smallmatrix}p_{n-1}\left(n\right) & p_{n}\left(n\right)\\
q_{n-1}\left(n\right) & q_{n}\left(n\right)
\end{smallmatrix}\right)\left(\begin{smallmatrix}\left(f\bar{f}\right)\left(n,0\right)\\
f\left(n,n\right)
\end{smallmatrix}\right)
\end{align*}
so for the denominators we get
\[
q_{n}\left(n+1\right)=q_{n-1}\left(n\right)f\left(n,0\right)\frac{\bar{f}\left(n,0\right)}{f\left(0,n\right)}+q_{n}\left(n\right)\frac{f\left(n,n\right)}{f\left(0,n\right)}=q_{n-1}\left(n\right)n^{3}\left(-1\right)+q_{n}\left(n\right)\cdot6.
\]
By the induction hypothesis $\left(n-1\right)!^{3}\mid q_{n-1}\left(n\right)$
and from the argument above $n!^{3}\mid q_{n}\left(n\right)$, so
we conclude that $n!^{3}\mid q_{n}\left(n+1\right)$. At this point,
we know the claim for $\left(n,m\right)$ with $1\leq m\leq n+1$.

Using the fact that $a\left(x,y\right)$ can be written as $A_{1}\left(x\right)+y\left(y-1\right)A_{2}\left(x\right)$
, we get that $a\left(x,y\right)=a\left(x,1-y\right)$. Since
\begin{align*}
q_{n}\left(y\right) & =e_{2}^{tr}\left[\prod_{0}^{n-1}M_{X}\left(k,y\right)\right]e_{2}=e_{2}^{tr}\left[\prod_{0}^{n-1}\left(\begin{smallmatrix}0 & 1\\
b\left(k+1\right) & a\left(k,y\right)
\end{smallmatrix}\right)\right]e_{2}
\end{align*}
we also get that $q_{n}\left(1-y\right)=q_{n}\left(y\right)$ and
$\deg_{y}\left(q_{n}\right)\leq2n$. From this we conclude that $q_{n}\left(1\right)\mid q_{n}\left(m\right)$
for all $-n\leq m\leq n+1$, which is a total of $2n+2\geq\deg_{y}\left(q_{n}\right)+1$
consecutive integers. Finally, using \lemref{binomial-polynomials}
from \appref{integer-values} about integer valued polynomials, we
conclude that $q_{n}\left(1\right)\mid q_{n}\left(m\right)$ for all
$m$, thus proving the induction step for the denominators.\\
\newpage{}

\textbf{\uline{Numerators:}}

The proof for the numerators that $q_{1}\left(n\right)\mid\lcm\left[n\right]^{3}p_{n}\left(m\right)$
for all $n\geq0,m\geq1$ is similar, but requires a bit more computations.
Assume that claim for $n-1$, we prove it for $n$.

The numerators from part \enuref{reciprocal-polynomials} in \claimref{Additive_form}
\[
q_{m}\left(1\right)p_{n}\left(m+1\right)+p_{m}\left(1\right)q_{n}\left(m+1\right)=q_{n}\left(1\right)p_{m}\left(n+1\right)+p_{n}\left(1\right)q_{m}\left(n+1\right)
\]
can be rewritten as 
\[
\frac{\lcm\left[n\right]^{3}p_{n}\left(m+1\right)}{q_{n}\left(1\right)}=\overbrace{\frac{\lcm\left[n\right]^{3}p_{m}\left(n+1\right)}{q_{m}\left(1\right)}}^{\left(1\right)}+\overbrace{\frac{\lcm\left[n\right]^{3}p_{n}\left(1\right)}{q_{n}\left(1\right)}}^{\left(2\right)}\cdot\overbrace{\frac{q_{m}\left(n+1\right)}{q_{m}\left(1\right)}}^{\left(3\right)}-\overbrace{\frac{\lcm\left[n\right]^{3}p_{m}\left(1\right)}{q_{m}\left(1\right)}}^{\left(4\right)}\cdot\overbrace{\frac{q_{n}\left(m+1\right)}{q_{n}\left(1\right)}}^{\left(5\right)}.
\]
To show that the expression on the left is an integer for $\left(n,m\right)$
when $0\leq m\leq n-1$, it is enough to show that $\left(1\right)-\left(5\right)$
on the right are integers. 
\begin{itemize}
\item Expressions $\left(2\right)$ and $\left(4\right)$ are on the first
row of the matrix field, and we saw in the beginning of the proof
that it is an integer (and we use the fact that $\lcm\left[m\right]\mid\lcm\left[n\right]$).
\item Expressions $\left(3\right)$ and $\left(5\right)$ follows from the
claim about the denominators (which is independent of this proof about
the numerators).
\item Expressions $\left(1\right)$ is true by the induction hypothesis.
\end{itemize}
To conclude, we just saw that $q_{1}\left(n\right)\mid\lcm\left[n\right]^{3}p_{n}\left(m\right)$
is true for $\left(n,m\right)$ when $1\leq m\leq n$.

Using part \enuref{increase-m} in \claimref{dual-field-identities}
with $n=m$ for the numerators we get 
\begin{align*}
p_{n}\left(n+1\right) & =\frac{1}{\left(f\bar{f}\right)\left(0,n\right)}\left[f\left(0,n\right)\left(\left(f\bar{f}\right)\left(n,0\right)p_{n-1}\left(n\right)+f\left(n,n\right)p_{n}\left(n\right)\right)-\left(\left(f\bar{f}\right)\left(n,0\right)q_{n-1}\left(n\right)+f\left(n,n\right)q_{n}\left(n\right)\right)\right]\\
 & =\left(-n^{3}p_{n-1}\left(n\right)+6p_{n}\left(n\right)\right)+\left(q_{n-1}\left(n\right)-\frac{6}{n^{3}}q_{n}\left(n\right)\right).
\end{align*}

It follows that 
\[
\frac{\lcm\left[n\right]^{3}p_{n}\left(n+1\right)}{n!^{3}}=\left(-\frac{\lcm\left[n\right]^{3}p_{n-1}\left(n\right)}{\left(n-1\right)!^{3}}+6\frac{\lcm\left[n\right]^{3}p_{n}\left(n\right)}{n!^{3}}\right)+\frac{\lcm\left[n\right]^{3}}{q_{n}\left(1\right)}\left(q_{n-1}\left(n\right)-\frac{6}{n^{3}}q_{n}\left(n\right)\right).
\]
The summand on the right is an integer from the claim about the denominators,
and the left summand is an integer, because we already proved the
claim for the pairs $\left(n-1,n\right)$ and $\left(n,n\right)$.
Thus, we conclude that the claim holds for $\left(n,n+1\right)$ ,
and all together we have seen that $\left(\frac{n!}{\lcm\left[n\right]}\right)^{3}\mid p_{n}\left(m\right)$
for $1\leq m\leq n+1$. The same trick as with the denominators show
that $p_{n}\left(m\right)=p_{n}\left(1-m\right)$, so the claim is
true for $-n\leq m\leq n+1$, and using \lemref{binomial-polynomials}
again we conclude that it is true for all $m$, thus finishing the
proof for the induction step, and therefore the original claim.
\end{proof}
\newpage With this results, we can lower bound the greatest common
divisor of $P\left(n,m+1\right)$ and $Q\left(n,m+1\right)$.
\begin{cor}
\label{cor:factorial-reduction}For all $n,m\geq0$ we have that 
\[
q_{m}\left(1\right)q_{n}\left(1\right)\mid Q\left(n,m+1\right),
\]
\begin{align*}
\frac{q_{m}\left(1\right)q_{n}\left(1\right)}{\lcm\left[\max\left(m,n\right)\right]^{3}} & \mid\gcd\left(P\left(n,m+1\right),Q\left(n,m+1\right)\right).
\end{align*}
In particular for $n=m$ we get that 
\begin{align*}
\left(\frac{n!}{\lcm\left[n\right]}\cdot n!\right)^{3}=\frac{\left(q_{n}\left(1\right)\right)^{2}}{\lcm\left[n\right]^{3}} & \mid\gcd\left(P\left(n,n+1\right),Q\left(n,n+1\right)\right).
\end{align*}
\end{cor}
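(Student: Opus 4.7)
The plan is to feed the explicit formula for $\left(P(n,m+1),Q(n,m+1)\right)$ supplied by part \enuref{reciprocal-polynomials} of \claimref{Additive_form} into the divisibility information about $p_n(y),q_n(y)$ coming from \lemref{q-n-lemma}, using the self-dual structure of the $\zeta(3)$ matrix field listed in \factref{zeta_3}.

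First, I would specialize the identity from \claimref{Additive_form}. Since $f(0,k) = -\bar{f}(0,k) = k^3$ and the matrix field is self-dual (so $\hat{p}_m = p_m$), the factors $\prod_{k=1}^m f(0,k)$ and $\prod_{k=1}^m \bar{f}(0,k)$ both equal $(m!)^3 = q_m(1)$, and the identity collapses to
\begin{align*}
Q(n,m+1) &= q_m(1)\, q_n(m+1),\\
P(n,m+1) &= q_m(1)\, p_n(m+1) + p_m(1)\, q_n(m+1).
\end{align*}
The first line already yields $q_m(1) q_n(1) \mid Q(n,m+1)$, because \lemref{q-n-lemma} gives $q_n(1) = (n!)^3 \mid q_n(m+1)$.

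For the gcd claim, I would show separately that each of the two summands defining $P(n,m+1)$ is divisible by $D := q_m(1) q_n(1) / \lcm[\max(m,n)]^3$. For the term $q_m(1) p_n(m+1)$, \lemref{q-n-lemma} gives $(n!/\lcm[n])^3 = q_n(1)/\lcm[n]^3 \mid p_n(m+1)$, so $q_m(1) q_n(1)/\lcm[n]^3$ divides it, and since $\lcm[n] \mid \lcm[\max(m,n)]$ the quantity $D$ divides it. Symmetrically, for $p_m(1) q_n(m+1)$ we use $q_m(1)/\lcm[m]^3 \mid p_m(1)$ and $q_n(1) \mid q_n(m+1)$. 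Combined with the observation that $D \mid Q(n,m+1)$ from the previous step, this gives $D \mid \gcd(P(n,m+1),Q(n,m+1))$.

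The particular case $n = m$ is then arithmetic bookkeeping: $q_n(1)^2/\lcm[n]^3 = (n!)^6/\lcm[n]^3 = \bigl(\tfrac{n!}{\lcm[n]} \cdot n!\bigr)^3$. I do not anticipate any real obstacle, since the corollary is essentially just a repackaging of \claimref{Additive_form}\enuref{reciprocal-polynomials} under the $\zeta(3)$-specific simplifications in \factref{zeta_3} combined with the divisibility relations in \lemref{q-n-lemma}; the only point of care is to invoke $\lcm[n], \lcm[m] \mid \lcm[\max(m,n)]$ uniformly so that both summands of $P(n,m+1)$ are compared against the same denominator $\lcm[\max(m,n)]^3$.
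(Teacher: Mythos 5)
Your argument follows essentially the same route as the paper: instantiate \claimref{Additive_form}\enuref{reciprocal-polynomials} with the $\zeta(3)$-specific simplifications (self-duality, $f(0,k)=\bar f(0,k)=k^3$), then apply the divisibility facts from \lemref{q-n-lemma} to each of the factor and each of the two summands of $P(n,m+1)$, comparing against $\lcm[\max(m,n)]^3$. One small slip: you wrote $f(0,k)=-\bar f(0,k)=k^3$, but in fact $\bar f(0,k)=k^3$ (it is $\bar f(k,0)$ that equals $-k^3$); this does not affect the rest, since the conclusion you draw, $\prod_{k=1}^m\bar f(0,k)=(m!)^3$, is correct as stated.
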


\begin{proof}
Using the presentation from \claimref{Additive_form}
\[
\left(\begin{smallmatrix}P\left(n,m+1\right)\\
Q\left(n,m+1\right)
\end{smallmatrix}\right)=\left(\begin{smallmatrix}q_{m}\left(1\right) & p_{m}\left(1\right)\\
0 & q_{m}\left(1\right)
\end{smallmatrix}\right)\left(\begin{smallmatrix}p_{n}\left(m+1\right)\\
q_{n}\left(m+1\right)
\end{smallmatrix}\right),
\]
and \lemref{q-n-lemma} we get that 
\begin{align*}
q_{m}\left(1\right)q_{n}\left(1\right) & \mid q_{m}\left(1\right)q_{n}\left(m+1\right)=Q\left(n,m+1\right)\\
\frac{q_{m}\left(1\right)q_{n}\left(1\right)}{\lcm\left[\max\left(m,n\right)\right]^{3}} & \mid q_{m}\left(1\right)p_{n}\left(m+1\right)+p_{m}\left(1\right)q_{n}\left(m+1\right)=P\left(n,m+1\right).
\end{align*}
\end{proof}
This factorial reduction property, will help us in the end to show
that $\zeta\left(3\right)$ is irrational, but it can also be used
to show more general properties of the matrix field, as follows.
\begin{thm}
\label{thm:all-directions-converge}Let $n_{i},m_{i}\geq1$ be any
sequence such that $\max\left(n_{i},m_{i}\right)\to\infty$. Then
$\frac{P\left(n_{i},m_{i}\right)}{Q\left(n_{i},m_{i}\right)}\to\zeta\left(3\right)$.
\end{thm}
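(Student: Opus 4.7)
The plan is to combine two symmetric expressions for the error with a uniform bound on the horizontal-line ratio $p_n(m)/q_n(m)$. First I would specialize Claim~\ref{claim:Additive_form}(2) to the $\zeta(3)$ matrix field: plugging in $f(0,k) = \bar f(0,k) = k^3$, $\bar f(k,0)/f(k,0) = -1$, $\bar f(0,0) = 0$, together with the self-duality $\hat p = p,\ \hat q = q$ from Fact~\ref{fact:zeta_3}(1), both formulas in that claim collapse to
\[
\frac{P(n, m+1)}{Q(n, m+1)} \;=\; \frac{p_n(m+1)}{q_n(m+1)} + \sum_{k=1}^m \frac{1}{k^3} \;=\; \frac{p_m(n+1)}{q_m(n+1)} + \sum_{k=1}^n \frac{1}{k^3}.
\]
Writing $R(j) := \sum_{k \geq j} 1/k^3$ for the tail of $\zeta(3)$, this gives the symmetric error expression
\[
\frac{P(n,m+1)}{Q(n,m+1)} - \zeta(3) \;=\; \frac{p_n(m+1)}{q_n(m+1)} - R(m+1) \;=\; \frac{p_m(n+1)}{q_m(n+1)} - R(n+1).
\]

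Second, I would establish the uniform estimate $|p_n(m)/q_n(m) - R(m)| \leq 1/(2n^2)$ for every integer $m \geq 1$. From the three-term recurrence of Claim~\ref{claim:dual-field-identities}(1) with $b_X(k) = -k^6$, an induction on $n$ starting from $\det\begin{pmatrix} 0 & 1 \\ 1 & a(0,m) \end{pmatrix} = -1$ yields $\det\begin{pmatrix} p_n(m) & p_{n+1}(m) \\ q_n(m) & q_{n+1}(m) \end{pmatrix} = -(n!)^6$, hence the telescoping identity $p_{n+1}/q_{n+1} - p_n/q_n = (n!)^6/(q_n q_{n+1})$. The limit $\lim_n p_n(m)/q_n(m) = R(m)$ then follows from Theorem~\ref{thm:limit_on_each_line} (whose hypotheses hold with $d = 3$) combined with the first identity above, since the $m=1$ case evaluates to $\sum_1^n 1/k^3 \to \zeta(3)$. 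The crucial missing ingredient is the lower bound $q_k(m) \geq (k!)^3$, which I would prove by induction on $k$ in its sharper form $q_{k+1}(m)/q_k(m) \geq (k+1)^3$: assuming $q_k/q_{k-1} \geq k^3$, the recurrence $q_{k+1} = a(k,m) q_k - k^6 q_{k-1}$ gives
\[
q_{k+1}/q_k \;=\; a(k,m) - k^6/(q_k/q_{k-1}) \;\geq\; a(k,m) - k^3 \;\geq\; (k+1)^3,
\]
using that for $m \geq 1$ one has $m(m-1) \geq 0$ and therefore $a(k,m) = k^3 + (k+1)^3 + 2m(m-1)(2k+1) \geq k^3 + (k+1)^3$; the base case $q_1(m)/q_0 = 2m^2 - 2m + 1 \geq 1$ starts the induction. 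Consequently $q_k(m) \geq (k!)^3$, and summing the telescoping gives
\[
\left|\frac{p_n(m)}{q_n(m)} - R(m)\right| \;\leq\; \sum_{k \geq n} \frac{(k!)^6}{(k!)^3((k+1)!)^3} \;=\; \sum_{k \geq n} \frac{1}{(k+1)^3} \;\leq\; \frac{1}{2n^2},
\]
independently of $m \geq 1$.

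Combining this uniform estimate with both sides of the symmetric error expression gives
\[
\left|\frac{P(n,m+1)}{Q(n,m+1)} - \zeta(3)\right| \;\leq\; \min\!\left(\tfrac{1}{2n^2},\,\tfrac{1}{2m^2}\right) \;\leq\; \frac{1}{2\max(n,m)^2}.
\]
For any sequence $(n_i, m_i)$ with $m_i \geq 1$ and $\max(n_i, m_i) \to \infty$, applying this to the pair $(n_i, m_i-1)$ (whose maximum also tends to infinity) drives the right-hand side to zero, yielding the theorem.

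The main obstacle I expect is the uniform lower bound $q_k(m) \geq (k!)^3$: the factorial divisibility from Lemma~\ref{lem:q-n-lemma} only gives this magnitude once non-vanishing is in place, and carrying both positivity and the sharp ratio $q_{k+1}/q_k \geq (k+1)^3$ through the recurrence depends critically on the explicit shape of $a(x,y)$ in the $\zeta(3)$ matrix field and on the assumption $m \geq 1$ (which makes the quadratic term in $a(k,m)$ nonnegative).
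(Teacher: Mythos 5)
Your proof is correct, and it takes a genuinely different (and in some respects sharper) route than the paper's.

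The paper splits into two cases, $m_i$ bounded (handled by \thmref{limit_on_each_line}) and $m_i$ unbounded, and in the unbounded case controls $\left|p_n(m)/q_n(m)\right|$ non-uniformly via an $\varepsilon$--$N$ argument: it bounds each telescoping term by $1/j^3$ using the divisibility $q_{j}(1)\mid q_j(m)$ from \lemref{q-n-lemma} (which tacitly assumes $q_j(m)\neq 0$), and then sends the finitely many head terms to zero separately using that each $q_j(y)$ is a nonconstant polynomial. Your approach instead produces a single explicit, $m$-uniform estimate
\[
\left|\frac{P(n,m+1)}{Q(n,m+1)}-\zeta(3)\right|\leq\frac{1}{2\max(n,m)^2},
\]
by (a) using \emph{both} sides of \claimref{Additive_form}(\ref{enu:reciprocal-polynomials}) simultaneously to get the symmetric error identity, and (b) replacing the divisibility argument with the strictly analytic monotone induction $q_{k+1}(m)/q_k(m)\geq (k+1)^3$, which rests only on $a(k,m)\geq k^3+(k+1)^3$ for integer $m$ (since $m(m-1)\geq 0$) and the vanishing of $b_X(0)$. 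This is a real improvement in transparency: it establishes positivity and the size lower bound $q_k(m)\geq (k!)^3$ at once, whereas the paper's divisibility-only argument does not by itself rule out $q_j(m)=0$ or control sign, and it avoids the two-case split entirely. The one trade-off is that your argument is tailored to the $\zeta(3)$ field (it needs the sign and coercivity of this particular $a(x,y)$), whereas the paper's structure makes explicit which general hypotheses (the $\varepsilon$ from \thmref{limit_on_each_line}, the divisibility lemma) are being invoked; but since the theorem is only stated for the $\zeta(3)$ field this is not a loss here.

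Two minor points worth recording. First, your invocation of \thmref{limit_on_each_line} to identify $\lim_n p_n(m)/q_n(m)=R(m)$ is legitimate and not circular: that theorem is proved before \thmref{all-directions-converge} and its hypothesis $\left|\prod \bar f(k,0)/f(k,0)\right|=1=o(n^3)$ is immediate. Second, when you apply the bound at $(n_i,m_i-1)$ with $m_i=1$ the $1/(2m^2)$ side degenerates, but this is harmless since the other side still gives $1/(2n_i^2)$ and $\max(n_i,m_i-1)\geq\max(n_i,m_i)-1\to\infty$ in all cases.
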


This is of course a generalization of \thmref{limit_on_each_line}
for this specific $\zeta\left(3\right)$ matrix field case. Before
we turn to prove it, lets see what we can say on these limits for
fixed lines.
\begin{cor}
\label{cor:zeta_3_limit_on_lines}For any $m\geq1$, the limit for
the $Y=m$ line is$\limfi n\frac{p_{n}\left(m\right)}{q_{n}\left(m\right)}=\sum_{m}^{\infty}\frac{1}{k^{3}}.$
\end{cor}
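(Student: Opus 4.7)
The plan is to deduce this corollary from Theorem~\thmref{limit_on_each_line} applied to the $\zeta(3)$ matrix field, combined with the reciprocal identity of Claim~\claimref{Additive_form}\enuref{reciprocal-polynomials}.

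First, I would verify that the hypotheses of Theorem~\thmref{limit_on_each_line} hold here. By Fact~\factref{zeta_3}, $b_X(x)=\fbf(x,0)$, $b_Y(y)=\fbf(0,y)$, and $f(k,0)=k^3=-\bar f(k,0)$; hence $\bar f(k,0)/f(k,0)=-1$ and $\left|\prod_{k=1}^n \bar f(k,0)/f(k,0)\right|=1$. Since the matrix field is self-dual ($\hat a=a$), we have $d=\deg_y \hat a = 2$, so the required bound $|{\prod}|=o(n^d)$ holds trivially. Consequently $L(m):=\lim_{n\to\infty} P(n,m)/Q(n,m)$ exists and is independent of $m$. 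To pin down the common value, I would compute $L(1)$: by Definition~\defref{deno-nume} the vertical factor is empty for $m=1$, so $\left(\begin{smallmatrix}P(n,1)\\Q(n,1)\end{smallmatrix}\right)=\left(\begin{smallmatrix}p_n(1)\\q_n(1)\end{smallmatrix}\right)$, and the computation recorded in Equation~\ref{eq:zeta_3_bottom_line} yields $P(n,1)/Q(n,1)=\sum_{k=1}^n 1/k^3\to\zeta(3)$. Therefore $L(m)=\zeta(3)$ for every $m\ge 1$.

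Second, I would apply Claim~\claimref{Additive_form}\enuref{reciprocal-polynomials} to express $P(n,m+1)/Q(n,m+1)$ in terms of $p_n(m+1)/q_n(m+1)$. Self-duality ($\hat p=p$, $\hat q=q$) together with $f(0,k)=\bar f(0,k)=k^3$ collapse the triangular factor in that claim to $\left(\begin{smallmatrix}(m!)^3 & p_m(1) \\ 0 & (m!)^3\end{smallmatrix}\right)$, giving
\[
\frac{P(n,m+1)}{Q(n,m+1)} \;=\; \frac{p_n(m+1)}{q_n(m+1)} \;+\; \frac{p_m(1)}{(m!)^3} \;=\; \frac{p_n(m+1)}{q_n(m+1)} \;+\; \sum_{k=1}^m \frac{1}{k^3}.
\]

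Finally, letting $n\to\infty$ on both sides and substituting $L(m+1)=\zeta(3)$ yields $\lim_n p_n(m+1)/q_n(m+1)=\zeta(3)-\sum_{k=1}^m 1/k^3=\sum_{k=m+1}^\infty 1/k^3$, which after relabeling $m+1\mapsto m$ is precisely the statement. Since every ingredient is already in place, there is no substantive obstacle; the only care needed is the bookkeeping around self-duality (via Fact~\factref{zeta_3}) to reduce the triangular factor to the clean form $(m!)^3$ and to identify $\hat p_m(1)$ with $p_m(1)$, together with the trivial index shift between the form of Claim~\claimref{Additive_form} and the statement of the corollary.
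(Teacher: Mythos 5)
Your argument is correct and follows essentially the same route as the paper: both use the triangular factorization from Claim~\claimref{Additive_form}(\ref{enu:reciprocal-polynomials}) to write $\frac{P(n,m+1)}{Q(n,m+1)}=\frac{p_n(m+1)}{q_n(m+1)}+\sum_{k=1}^m\frac{1}{k^3}$, get the left-hand limit equal to $\zeta(3)$ for fixed $m$, and subtract. The one small divergence is sourcing: the paper invokes \thmref{all-directions-converge} (whose proof appears only afterward, creating a mild forward dependence), whereas you go straight to \thmref{limit_on_each_line} and verify its hypothesis $\left|\prod_{k=1}^n\frac{\bar f(k,0)}{f(k,0)}\right|=1=o(n^d)$ yourself, which is cleaner and exactly what the bounded-$m_i$ case of that theorem's proof does anyway. (Incidentally your $d=\deg_y\hat a=2$ is the correct value; the paper's proof of the bounded case writes $d=3$, but since the product is $O(1)$ the bound $o(n^d)$ holds for either, so nothing breaks.)
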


\begin{proof}
Using the notation $\left(\begin{smallmatrix}P\left(n,m-1\right)\\
Q\left(n,m-1\right)
\end{smallmatrix}\right)=\left(\begin{smallmatrix}q_{m-1}\left(1\right) & p_{m-1}\left(1\right)\\
0 & q_{m-1}\left(1\right)
\end{smallmatrix}\right)\left(\begin{smallmatrix}p_{n}\left(m\right)\\
q_{n}\left(m\right)
\end{smallmatrix}\right)$, we get that 
\[
\frac{P\left(n,m-1\right)}{Q\left(n,m-1\right)}=\frac{p_{n}\left(m\right)}{q_{n}\left(m\right)}+\frac{p_{m-1}\left(1\right)}{q_{m-1}\left(1\right)}
\]
By \thmref{all-directions-converge} we know that $\limfi n\frac{P\left(n,m-1\right)}{Q\left(n,m-1\right)}=\zeta\left(3\right)$,
and we have already seen in \eqref{zeta_3_bottom_line} that $\frac{p_{m-1}\left(1\right)}{q_{m-1}\left(1\right)}=\sum_{1}^{m-1}\frac{1}{k^{3}}$,
so together we get that $\limfi n\frac{p_{n}\left(m\right)}{q_{n}\left(m\right)}=\sum_{m}^{\infty}\frac{1}{k^{3}}$.
\end{proof}
And now for the proof of the theorem.
\begin{proof}[Proof of \thmref{all-directions-converge}]
\textbf{\uline{The \mbox{$m_{i}$} bounded case}}\textbf{:}

We can assume here that $m_{i}$ is fixed, so this case is simply
an application of \thmref{limit_on_each_line}, for which we need
to show that $\left|\prod_{k=1}^{n}\frac{\bar{f}\left(k,0\right)}{f\left(k,0\right)}\right|=o\left(n^{d}\right)$
for $d=\deg_{y}\left(f\left(y,x\right)+\bar{f}\left(y,x+1\right)\right)$.
Since in our case $\left|\frac{\bar{f}\left(k,0\right)}{f\left(k,0\right)}\right|=1$
and $d=3$, this is clearly true. It follows that $\limfi n\frac{P\left(n,m\right)}{Q\left(n,m\right)}=L_{m}$
is constant in $m$ and since $\frac{P\left(n,1\right)}{Q\left(n,1\right)}=\frac{p_{n}\left(1\right)}{q_{n}\left(1\right)}=\sum_{1}^{n}\frac{1}{k^{3}}$,
the limt must be $\zeta\left(3\right)$.

\medskip{}

\newpage{}

\textbf{\uline{The \mbox{$m_{i}$} unbounded case}}\textbf{:} Here
we will use the second presentation of $P$ and $Q$, namely
\[
\left(\begin{smallmatrix}P\left(n,m\right)\\
Q\left(n,m\right)
\end{smallmatrix}\right)=\left(\begin{smallmatrix}q_{m}\left(1\right) & p_{m}\left(1\right)\\
0 & q_{m}\left(1\right)
\end{smallmatrix}\right)\left(\begin{smallmatrix}p_{n}\left(m+1\right)\\
q_{n}\left(m+1\right)
\end{smallmatrix}\right)
\]
and therefore
\[
\frac{P\left(n,m\right)}{Q\left(n,m\right)}=\frac{p_{n}\left(m+1\right)}{q_{n}\left(m+1\right)}+\frac{p_{m}\left(1\right)}{q_{m}\left(1\right)}.
\]

Using the intuition from \corref{zeta_3_limit_on_lines}, we expect
$\limfi m\frac{p_{n}\left(m+1\right)}{q_{n}\left(m+1\right)}=\sum_{m+1}^{\infty}\frac{1}{k^{3}}$
for any fixed $m$. As this is the tail of a convergent sum, as $m\to\infty$,
the limit goes to zero. If we can show that this somehow implies that
$\frac{p_{n}\left(m+1\right)}{q_{n}\left(m+1\right)}\overset{m\to\infty}{\longrightarrow}0$
uniformly in $n$, then $\limfi i\frac{P\left(n_{i},m_{i}\right)}{Q\left(n_{i},m_{i}\right)}=\limfi i\frac{p_{m_{i}}\left(1\right)}{q_{m_{i}}\left(1\right)}=\limfi i\sum_{1}^{m_{i}}\frac{1}{k^{3}}=\zeta\left(3\right)$.

More formally, fix some $\varepsilon>0$, and we need to show that
for all $m$ large enough $\left|\frac{p_{n}\left(m\right)}{q_{n}\left(m\right)}\right|\leq\frac{\varepsilon}{2}$
independent of $n$.  

Recall that 
\[
\left(\begin{smallmatrix}p_{n-1}\left(y\right) & p_{n}\left(y\right)\\
q_{n-1}\left(y\right) & q_{n}\left(y\right)
\end{smallmatrix}\right)D_{b_{X}\left(n\right)}=\prod_{0}^{n-1}M_{X}\left(k,y\right),
\]
so taking the determinant, we get that 
\[
\left(p_{n-1}\left(y\right)q_{n}\left(y\right)-p_{n}\left(y\right)q_{n-1}\left(y\right)\right)b\left(n\right)=\prod_{1}^{n}\left(-b\left(k\right)\right),
\]
which we can rewrite as
\[
\frac{p_{n}\left(y\right)}{q_{n}\left(y\right)}-\frac{p_{n-1}\left(y\right)}{q_{n-1}\left(y\right)}=\frac{\left(-1\right)^{n-1}\prod_{1}^{n-1}b\left(k\right)}{q_{n-1}\left(y\right)q_{n}\left(y\right)}.
\]

Using the fact that $p_{0}\left(y\right)=0$ , $q_{0}\left(y\right)=1$
we get the upper bound 
\[
\left|\frac{p_{n}\left(y\right)}{q_{n}\left(y\right)}\right|=\left|\sum_{j=1}^{n}\frac{\left(-1\right)^{j}\prod_{1}^{j-1}b\left(k\right)}{q_{j-1}\left(y\right)q_{j}\left(y\right)}\right|\leq\sum_{j=1}^{\infty}\left|\frac{\prod_{1}^{j-1}b\left(k\right)}{q_{j-1}\left(y\right)q_{j}\left(y\right)}\right|.
\]
By \lemref{q-n-lemma} we have that $\prod_{1}^{n-1}b\left(k\right)=\left(n-1\right)!^{6}=q_{n-1}\left(1\right)^{2}$,
and also $q_{n-1}\left(1\right)\mid q_{n-1}\left(m\right)$ and $q_{n-1}\left(1\right)n^{3}=q_{n}\left(n\right)\mid q_{n}\left(m\right)$
so that 
\[
\left|\frac{\prod_{1}^{j-1}b\left(k\right)}{q_{j-1}\left(m\right)q_{j}\left(m\right)}\right|=\left|\frac{q_{j-1}\left(1\right)}{q_{j-1}\left(m\right)}\cdot\frac{q_{j-1}\left(1\right)j^{3}}{q_{j}\left(m\right)}\cdot\frac{1}{j^{3}}\right|\leq\frac{1}{j^{3}}.
\]
This already shows that $\left|\frac{p_{n}\left(m\right)}{q_{n}\left(m\right)}\right|\leq\sum_{j=1}^{\infty}\frac{1}{j^{3}}$,
which is of course not enough, as instead of the tail, we got the
full sum. To solve this, we note that each one of the $q_{j}\left(y\right)$
for fixed $j\geq1$ are nonconstant polynomials of $y$ (of degree
$2j$) so that $\limfi y\left|\frac{q_{j-1}\left(1\right)}{q_{j-1}\left(y\right)}\cdot\frac{q_{j-1}\left(1\right)}{q_{j}\left(y\right)}\right|=0$.
Fixing $\varepsilon>0$ and $N>0$, we can find $M=M_{\varepsilon,N}$
large enough such that $\left|\frac{q_{j-1}\left(1\right)}{q_{j-1}\left(y\right)}\cdot\frac{q_{j-1}\left(1\right)}{q_{j}\left(y\right)}\right|<\frac{\varepsilon}{N}$
for all $y>M$ and $1\leq j<N$. In particular, for any integer $m>M$
we have that 
\[
\left|\frac{p_{n}\left(m\right)}{q_{n}\left(m\right)}\right|\leq\sum_{j=1}^{\infty}\left|\frac{q_{j-1}\left(1\right)}{q_{j-1}\left(m\right)}\cdot\frac{q_{j-1}\left(1\right)}{q_{j}\left(m\right)}\right|\leq\frac{\varepsilon}{N}N+\sum_{j=N}^{\infty}\left|\frac{q_{j-1}\left(1\right)}{q_{j-1}\left(m\right)}\cdot\frac{q_{j-1}\left(1\right)}{q_{j}\left(m\right)}\right|\leq\varepsilon+\sum_{j=N}^{\infty}\frac{1}{j^{3}}.
\]
Since $\sum_{1}^{\infty}\frac{1}{j^{3}}<\infty$ converges, we can
find $N$ large enough so that $\sum_{N}^{\infty}\frac{1}{j^{3}}\leq\varepsilon$
also, so together we get that for all $m$ big enough (independent
of $n$) we have $\left|\frac{p_{n}\left(m\right)}{q_{n}\left(m\right)}\right|\leq2\varepsilon$
which is what we wanted to prove.
\end{proof}
\newpage Finally, we combine all of the results to show that $\zeta\left(3\right)$
is irrational.
\begin{thm}
\label{thm:zeta-3-irrational}The number $\zeta\left(3\right)$ is
irrational.
\end{thm}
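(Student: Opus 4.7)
The plan is to apply \thmref{improved-rationality-test} to the diagonal approximants
\[
p_n := P(n, n+1), \qquad q_n := Q(n, n+1).
\]
\thmref{all-directions-converge} already supplies the convergence $p_n/q_n \to \zeta(3)$, and the non-constancy of the sequence follows from part~(2) of \claimref{upper-bound} applied to the underlying continued fraction along the diagonal (whose $b$-polynomial is $-k^6 \neq 0$). \corref{factorial-reduction} supplies the crucial gcd lower bound
\[
\gcd(p_n, q_n) \;\geq\; D_n := \left(\frac{(n!)^{2}}{\lcm\left[n\right]}\right)^{3}.
\]
Since $\ln \lcm\left[n\right] = n + o(n)$ by the prime number theorem, for every fixed $\varepsilon > 0$ and all sufficiently large $n$ we have $D_n \geq (n!)^{6}\, e^{-3(1+\varepsilon) n}$. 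The irrationality will follow once I establish $|q_n \zeta(3) - p_n| = o(D_n)$.

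To bound this error I would first reinterpret the diagonal lattice path as a single polynomial continued fraction. Concatenating the step matrices $M_X$ and $M_Y$ along a zigzag from $(0, 1)$ to $(n, n+1)$ produces a $2\times2$ polynomial step matrix whose determinant is (up to sign) $b_X(\cdot)\, b_Y(\cdot)$, and after the standard continued-fraction normalization one recovers exactly the Ap\'ery-style presentation stated in the introduction,
\[
\zeta(3) \;=\; \frac{6}{5 + \KK_{1}^{\infty} \dfrac{-k^{6}}{17\,(k^{3}+(1+k)^{3}) - 12\,(2k+1)}},
\]
with $a_k = 17(k^{3}+(k+1)^{3}) - 12(2k+1)$ and $b_k = -k^{6}$. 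Applying \claimref{upper-bound} to this continued fraction gives
\[
|q_n \zeta(3) - p_n| \;\leq\; q_n \sum_{k \geq n} \frac{\prod_{i=1}^{k} |b_i|}{|q_k q_{k+1}|} \;=\; \sum_{k \geq n} \frac{(k!)^{6}\, q_n}{|q_k q_{k+1}|}.
\]

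The substantive step is an asymptotic analysis of this three-term recurrence. Substituting $q_n = \rho_n (n!)^{3}$ into the leading-order form $q_{n+1} = (34n^{3} + O(n^{2}))\, q_n - n^{6}\, q_{n-1}$ reduces the characteristic equation to $r^{2} - 34 r + 1 = 0$, with roots $\alpha = (1+\sqrt{2})^{4} = 17 + 12\sqrt{2}$ and $\beta = (1-\sqrt{2})^{4} = 17 - 12\sqrt{2}$, satisfying $\alpha\beta = 1$. A Perron--Poincar\'e analysis yields two independent solutions growing (up to polynomial factors in $n$) like $\alpha^{n} (n!)^{3}$ and $\beta^{n} (n!)^{3}$; the denominator $q_n$ must realize the dominant rate, while the error $q_n \zeta(3) - p_n$, being a solution of the same recurrence that tends to $0$, must lie in the subdominant line. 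Consequently there are constants $C > 0$ and $r \in \mathbb{R}$ with
\[
q_n \geq C^{-1} \alpha^{n} (n!)^{3} n^{-|r|}, \qquad |q_n \zeta(3) - p_n| \leq C\, \beta^{n} (n!)^{3}\, n^{r}.
\]

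To finish, one checks the numerical inequality $\beta\, e^{3} < 1$: indeed $\beta \approx 0.029$ while $e^{-3} \approx 0.050$, and algebraically $\beta < e^{-3}$ reduces to $1 + e^{-6} < 34 e^{-3}$, which is immediate. Choose $\varepsilon > 0$ small enough that $\beta\, e^{3(1+\varepsilon)} < 1$ as well; then
\[
\frac{|q_n \zeta(3) - p_n|}{D_n} \;\leq\; \frac{C\, n^{r}\, \beta^{n} (n!)^{3}}{(n!)^{6}\, e^{-3(1+\varepsilon) n}} \;=\; \frac{C\, n^{r}\, \bigl(\beta\, e^{3(1+\varepsilon)}\bigr)^{n}}{(n!)^{3}} \;\longrightarrow\; 0,
\]
and \thmref{improved-rationality-test} concludes the irrationality of $\zeta(3)$. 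The principal obstacle I expect is the Perron--Poincar\'e step: identifying the two characteristic rates is routine, but rigorously pinning $q_n$ on the dominant solution and $q_n\zeta(3) - p_n$ on the subdominant one (rather than on any nontrivial linear combination) requires a Casoratian/Wronskian argument together with the convergence already established in \thmref{all-directions-converge}.
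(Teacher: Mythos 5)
Your overall strategy matches the paper's: pass to the diagonal $m=n+1$, reinterpret the zigzag product as an Ap\'ery-type polynomial continued fraction with $b_k=-k^6$ and $a_k=34k^3+51k^2+27k+5$, invoke \corref{factorial-reduction} to get the gcd bound $(n!)^6/\lcm\left[n\right]^3\mid\gcd\bigl(P(n,n+1),Q(n,n+1)\bigr)$, identify the characteristic roots $\lambda_\pm=(1\pm\sqrt{2})^4$, and close the argument with $e^3<\lambda_+$. The part you flag as ``the principal obstacle''---rigorously pinning the denominators on the dominant Perron rate---is exactly where the paper avoids Poincar\'e--Perron and Casoratian machinery altogether. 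The trick is that $v_n:=Q(n,n+1)/(n!)^6$ is a \emph{positive integer} (this is \corref{factorial-reduction} again) and that the normalized recurrence $v_{n+1}=\tfrac{F(n)}{n^3}v_n-\tfrac{n^3}{(n+1)^3}v_{n-1}$, with $F(n)/n^3\ge 11$ for $n\ge3$, immediately gives $v_{n+1}\ge 10v_n$, hence $v_n\ge 10^n$. This already outpaces $\lambda_-^n\approx0.03^n$ by a wide margin, so the subdominant branch is excluded with no asymptotic theory at all, and the upper bound on $|\zeta(3)-P_n/Q_n|$ then falls out directly from part~(1) of \claimref{upper-bound} (you do not need to argue separately that the residual is the minimal solution). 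This integrality argument was missing from your sketch and is the piece you would need to supply; the abstract Casoratian route would also work but is strictly heavier and still requires an external input to rule out the subdominant branch, since $p_n/q_n\to\zeta(3)$ by itself does not imply $q_n\zeta(3)-p_n\to0$.

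There is also a bookkeeping slip worth noting: you substitute $q_n=\rho_n(n!)^3$ into a recurrence $q_{n+1}\approx 34n^3q_n-n^6q_{n-1}$ for $q_n:=Q(n,n+1)$, but that three-term recurrence is satisfied by $u_n:=Q(n,n+1)/(n!)^3$, not by $Q(n,n+1)$ itself; the correct scaling is $Q(n,n+1)\sim(n!)^6\lambda_+^n$, so your stated bounds $q_n\gtrsim\alpha^n(n!)^3$ and $|q_n\zeta(3)-p_n|\lesssim\beta^n(n!)^3$ each miss a factor of $(n!)^3$. This error happens to be harmless in your final quotient because the spurious factors cancel (and in fact make it appear to decay faster than it does), but the intermediate claims as written are not correct and should be repaired.
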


\begin{proof}
Consider the diagonal direction on the $\zeta\left(3\right)$ matrix
field where $m=n+1$. From \thmref{all-directions-converge} we have
that 
\[
\limfi n\frac{P\left(n,n+1\right)}{Q\left(n,n+1\right)}=\zeta\left(3\right).
\]

\textbf{\uline{The main idea:}}

Let us denote $Q_{n}=Q\left(n,n+1\right),\;P_{n}=P\left(n,n+1\right)$
so that $\limfi n\frac{P_{n}}{Q_{n}}=\zeta\left(3\right)$. Recall
that in \thmref{improved-rationality-test} we showed that if $\frac{P_{n}}{Q_{n}}$
is not eventually constant and $\left|Q_{n}\zeta\left(3\right)-P_{n}\right|=o\left(gcd\left(P_{n},Q_{n}\right)\right)$,
then $\zeta\left(3\right)$ is irrational. We will begin by showing
that the diagonal is also a polynomial continued fraction in disguise,
and then use it to approximate the errors and denominators.

Setting $\lambda_{+}=\left(1+\sqrt{2}\right)^{4}$, we will first
show that given any $\varepsilon>0$ we have:
\[
\left.\begin{array}{c}
\left|\zeta\left(3\right)-\frac{P_{n}}{Q_{n}}\right|=O\left(\frac{1}{\left(\lambda_{+}-\varepsilon\right)^{2n}}\right)\\
Q_{n}=O\left(\left(n!\right)^{6}\left(\lambda_{+}+\varepsilon\right)^{n}\right)
\end{array}\right\} \;\Rightarrow\;\left|Q_{n}\zeta\left(3\right)-P_{n}\right|=O\left(\frac{\left(n!\right)^{6}\left(\lambda_{+}+\varepsilon\right)^{n}}{\left(\lambda_{+}-\varepsilon\right)^{2n}}\right)\sim\frac{\left(n!\right)^{6}}{\lambda_{+}^{n}}.
\]
We then use the well known result that $\lcm\left[n\right]=O\left(\left(e+\varepsilon\right)^{n}\right)$
(it follows from the prime number theorem, see \cite{apostol_introduction_1998})
together with \corref{factorial-reduction} to get that 
\[
\frac{\left(n!\right)^{6}}{e^{3n}}\sim\left(\frac{n!}{\lcm\left[n\right]}\cdot n!\right)^{3}\mid\gcd\left(P_{n},Q_{n}\right).
\]
Finally, since $20.08\sim e^{3}<\lambda_{+}=\left(1+\sqrt{2}\right)^{4}\sim33.97$,
we could use the irrationality test and show that $\zeta\left(3\right)$
is irrational.

\medskip{}

\textbf{\uline{Step 1: Find recursion relation for \mbox{$Q_{n}$}:}}

With this main idea, we are left to find the growth rate of $Q_{n}$
and how fast $\left|\zeta\left(3\right)-\frac{P_{n}}{Q_{n}}\right|$
goes to zero. 

Using the conservativeness condition on the matrix field, we get that
\begin{align*}
\left(\begin{smallmatrix}P_{n}\\
Q_{n}
\end{smallmatrix}\right)=\left(\begin{smallmatrix}P\left(n,n+1\right)\\
Q\left(n,n+1\right)
\end{smallmatrix}\right) & =\left[\prod_{k=1}^{n}M_{Y}\left(0,k\right)\right]\left[\prod_{k=0}^{n-1}M_{X}\left(k,n+1\right)\right]e_{2}=\left[\prod_{k=1}^{n}M_{X}\left(k-1,k\right)M_{Y}\left(k,k\right)\right]e_{2},
\end{align*}
where 
\begin{align*}
M_{X}\left(k-1,k\right)M_{Y}\left(k,k\right) & =\left(\begin{smallmatrix}0 & 1\\
b\left(k\right) & f\left(k-1,k\right)-\bar{f}\left(k,k\right)
\end{smallmatrix}\right)\left(\begin{smallmatrix}\bar{f}\left(k,k\right) & 1\\
b\left(k\right) & f\left(k,k\right)
\end{smallmatrix}\right)\\
 & =\left(\begin{smallmatrix}b\left(k\right) & f\left(k,k\right)\\
f\left(k-1,k\right)b\left(k\right)\; & \left(f\bar{f}\right)\left(k,0\right)+f\left(k,k\right)f\left(k-1,k\right)-\left(f\bar{f}\right)\left(k,k\right)
\end{smallmatrix}\right)\\
 & =\left(\begin{smallmatrix}-k^{6} & 6k^{3}\\
-f\left(k-1,k\right)k^{6}\; & 6k^{3}f\left(k-1,k\right)-k^{6}
\end{smallmatrix}\right)=k^{3}\left(\begin{smallmatrix}-k^{3} & 6\\
-f\left(k-1,k\right)k^{3}\; & 6f\left(k-1,k\right)-k^{3}
\end{smallmatrix}\right).
\end{align*}

Fortunately, the last matrix is also a polynomial continued matrix
in disguise (see \cite{david_euler_2023} for the general process
of converting a polynomial matrix to continued fraction form). Indeed,
setting $U\left(k\right)=\left(\begin{smallmatrix}0 & 6\\
1 & \left(6f\left(k-1,k\right)-k^{3}\right)\;
\end{smallmatrix}\right)$ we get 
\[
M\left(k\right):=U\left(k\right)^{-1}\left(\begin{smallmatrix}-k^{3} & 6\\
-f\left(k-1,k\right)k^{3}\; & 6f\left(k-1,k\right)-k^{3}
\end{smallmatrix}\right)U\left(k+1\right)=\begin{pmatrix}0 & -k^{6}\\
1 & 6f\left(k,k+1\right)-k^{3}-\left(1+k\right)^{3}
\end{pmatrix},
\]
and therefore 
\begin{align*}
\frac{1}{\left(n!\right)^{3}}\left(\begin{smallmatrix}P_{n}\\
Q_{n}
\end{smallmatrix}\right) & =U\left(1\right)\left[\prod_{k=1}^{n}M\left(k\right)\right]U\left(n+1\right)^{-1}e_{2}=\left(\begin{smallmatrix}0 & 6\\
1 & 5
\end{smallmatrix}\right)\left[\prod_{k=1}^{n}M\left(k\right)\right]e_{1}=\left(\begin{smallmatrix}0 & 6\\
1 & 5
\end{smallmatrix}\right)\left[\prod_{k=1}^{n-1}M\left(k\right)\right]e_{2}.
\end{align*}

Setting as usuall $\begin{pmatrix}Q'_{n}\\
P'_{n}
\end{pmatrix}=\prod_{k=1}^{n-1}M\left(k\right)e_{2}$, both $P_{n}',Q_{n}'$ and therefore $u_{n}:=\frac{Q_{n}}{\left(n!\right)^{3}}=P_{n}'+5Q_{n}'$
satisfy the same reccurence

\begin{align*}
u_{n+1} & =u_{n}\left(6f\left(n,n+1\right)-\left(n+1\right)^{3}-n^{3}\right)-u_{n-1}n^{6},
\end{align*}
where $u_{0}=\frac{Q_{0}}{0!^{3}}=1$ and $u_{1}=\frac{Q_{1}}{1!^{3}}=6f\left(0,1\right)-1=5$.
Denote
\begin{align*}
F\left(n\right) & =6f\left(n,n+1\right)-\left(n+1\right)^{3}-n^{3}=34n^{3}+51n^{2}+27n+5,
\end{align*}
so the recurrence can be written as $u_{n+1}=F\left(n\right)u_{n}-n^{6}u_{n-1}$.
In particular, we already obtain that 
\[
\zeta\left(3\right)=\limfi n\frac{P_{n}}{Q_{n}}=\left(\begin{smallmatrix}0 & 6\\
1 & 5
\end{smallmatrix}\right)\left[\prod_{1}^{n}\left(\begin{smallmatrix}0 & -k^{6}\\
1 & F\left(k\right)
\end{smallmatrix}\right)\right]\left(0\right)=\left(\begin{smallmatrix}0 & 6\\
1 & 5
\end{smallmatrix}\right)\left(\KK_{1}^{\infty}\frac{-k^{6}}{F\left(k\right)}\right),
\]
or alternatively our diagonal (and Ap\'ery's original continued fraction)
gives
\[
\frac{6}{\zeta\left(3\right)}-5=\KK_{1}^{\infty}\frac{-k^{6}}{F\left(k\right)}.
\]
\medskip{}

\textbf{\uline{Step 2: Analyze the recurrence to find the growth
rate of \mbox{$Q_{n}$}:}}

By \corref{factorial-reduction} we know that $\left(n!\right)^{6}\mid Q_{n}$,
so that $v_{n}=\frac{Q_{n}}{\left(n!\right)^{6}}=\frac{u_{n}}{\left(n!\right)^{3}}$
are integers which satisfy
\[
v_{n+1}\left(n+1\right)^{3}=F\left(n\right)v_{n}-n^{3}v_{n-1},
\]
where $v_{0}=\frac{Q_{0}}{0!^{6}}=1$ and $v_{1}=\frac{Q_{1}}{1!^{6}}=5$.
Equivalently, we can write 
\[
v_{n+1}=\frac{F\left(n\right)}{n^{3}}v_{n}-\frac{n^{3}}{\left(1+n\right)^{3}}v_{n-1}.
\]
Taking the limit only for the coefficients, we get the ``limit''
recurrence $v_{n+1}'=34v_{n}'-v_{n-1}'$. This corresponds to the
quadratic equation $x^{2}-34x+1=0$ with the roots 
\[
\lambda_{\pm}=\frac{34\pm\sqrt{1156-4}}{2}=\frac{34\pm24\sqrt{2}}{2}=17\pm12\sqrt{2}=\left(1\pm\sqrt{2}\right)^{4},
\]
so a standard computation shows that $\frac{v_{n}'}{v_{n-1}'}\to\lambda_{+}=\left(1+\sqrt{2}\right)^{4}$.
In the original recurrence with the nonconstant coefficients, the
same holds, but needs a bit more explanation. As with the standard
recurrence with constant coefficients, we expect the general solution
to behave like $v_{n}\sim\lambda_{+}^{n}$, though there is a specific
starting position for which $v_{n}\sim\lambda_{-}^{n}$. Since $\lambda_{-}=\left(1-\sqrt{2}\right)^{4}\sim0.03$
, this is highly unlikely to happen, since we deal with integer values.
More sepcifically, the first few elements in $v_{i}$ are $1,5,73,1445,33001,...$
which is an increasing sequence of positive integers, and since $\frac{F\left(n\right)}{n^{3}}\geq11$
for $n\geq3$, it is not hard to show by induction that 
\[
v_{n+1}=\frac{F\left(n\right)}{n^{3}}v_{n}-\frac{n^{3}}{\left(1+n\right)^{3}}v_{n-1}\geq11v_{n}-v_{n-1}\geq10v_{n},
\]
so at least we get that $v_{n}\geq10^{n}$ grows much faster than
the very special case of $\lambda_{-}^{n}$. This is enough to show
that for every $\varepsilon>0$ and for any $n$ large enough, we
have
\[
\left(\lambda_{+}-\varepsilon\right)^{n}\leq v_{n}\leq\left(\lambda_{+}+\varepsilon\right)^{n}.
\]
The rest of the details are standard computations, and we leave it
to the reader.

\medskip{}

\newpage{}

\textbf{\uline{Step 3: Analyze the approximation error:}}

The sequence of $\frac{P_{n}}{\left(n!\right)^{3}},u_{n}=\frac{Q_{n}}{\left(n!\right)^{3}}$
are the numerators and denominators of the continued fraction $\KK_{1}^{\infty}\frac{-n^{6}}{F\left(n\right)}$.
Using \claimref{upper-bound} we get that for all $n$ large enough
\[
\left|\zeta\left(3\right)-\frac{P_{n}}{Q_{n}}\right|\leq\sum_{k=n}^{\infty}\frac{\left(k!\right)^{6}}{\left|u_{k}u_{k+1}\right|}=\sum_{k=n}^{\infty}\frac{1}{\left(k+1\right)^{3}\left|v_{k}v_{k+1}\right|}\leq\sum_{n}^{\infty}\frac{1}{\left(\lambda_{+}-\varepsilon\right)^{2k+1}}=O\left(\frac{1}{\left(\lambda_{+}-\varepsilon\right)^{2n}}\right).
\]

These are the growth rate for $Q_{n}=\left(n!\right)^{6}v_{n}$ and
the error for $\left|\zeta\left(3\right)-\frac{P_{n}}{Q_{n}}\right|$
that we needed in the beginning, thus completing the proof.
\end{proof}

\section{\label{sec:On-future-fractions}On future polynomial continued fractions}

The main goal of this paper was to introduce the conservative matrix
field, and as an application reprove Ap\'ery's result about the irrationality
of $\zeta\left(3\right)$. As can be seen in \secref{The-z3-case},
the final proof was very specific to the matrix field of $\zeta\left(3\right)$,
which has several nice properties, and doesn't hold in general. However
it might be possible (as numerical computations suggest) that some
of the results hold in a more general setting.

While this irrationality result is already interesting by itself,
the conservative matrix field object also seems to have many interesting
properties. Among others, it is a natural generalization of quadratic
equations, and it involves a bit of noncommutative cohomology theory
in the form of cocycles and coboundaries.

So far, the conservative matrix fields that we managed to find where
$f,\bar{f}$ are conjugate polynomials of degree 4 or more seem to
always be degenerate, namely $a\left(x,y\right)=f\left(x,y\right)-\bar{f}\left(x+1,y\right)$
doesn't depend on $y$. This might be related to the fact that we
work over $2\times2$ matrices, which might bound the possible matrix
fields. Whether this is the case or not, this leads to several possible
interesting generalizations of this theory, which are standard in
the theory of continued fractions and in number theory in general.
\begin{enumerate}
\item While many of the results mentioned in this paper are true for general
continued fractions over $\CC$ (and even other fields), the irrationality
of $\zeta\left(3\right)$ relied heavily on the fact that the defining
polynomials $f,\bar{f}$ were in $\ZZ\left[x,y\right]$. This leads
naturally to the question of what happens when we use other integer
rings in algebraic extensions, e.g. $\ZZ\left[i\right]$ or $\ZZ\left[\sqrt{2}\right]$.
Both in the $\zeta\left(2\right)$ and $\zeta\left(3\right)$ matrix
fields cases we can find in the background algebraic numbers of degree
$2$ (namely $1+i$ and $\zeta_{3}=e^{\frac{2\pi}{3}i}$ respectively).
This type of field extension, with the right definition of generalized
continued fraction might add many more interesting examples.
\item In the proof of the irrationality of $\zeta\left(3\right)$ we had
two main results that we needed to show. One was to find the error
rate and how fast it converges to zero, and the second was to find
$gcd\left(P_{n},Q_{n}\right)$ and hope that it grows to infinity
fast enough. As it is usually the case in number theoretic problems,
the first result lives in the standard Euclidean geometry, where we
needed to show that some sequence goes to zero in the $\left|\cdot\right|_{\infty}$
norm, and the second result can be seen as showing that the $p$-adic
norms of $\left|gcd\left(P_{n},Q_{n}\right)\right|_{p}$ all go to
zero as well. This suggests a more general approach where the matrix
field lives over the Adeles, and the convergence in the real and $p$-adic
places together prove irrationality.
\item All the results in this paper were for $2\times2$ matrices, and a
natural generalization would be going to a higher dimension matrices.
There are many suggestions for what should be the generalization of
continued fractions to higher dimensions, however probably one of
the best approaches is to change the language all together from continued
fractions to lattices in $\RR^{d}$. The subject of lattices is well
studied in the literature with many connections to other subjects.
With this approach, the question should be what is the right way to
formulate the results about general continued fraction as results
on lattices, and what can we say in higher dimension.
\end{enumerate}
These three types of generalization of changing the field, the norm,
or the dimension, can also be combined. Of course, there are more
tools available already in ``standard'' $2\times2$ matrices over
the integers to study polynomial continued fraction. However, it seems
that the conservative matrix field holds some interesting structure
which might reveal itself to be very useful not only to prove results
about continued fractions, but to other subjects as well.

\newpage{}

\part{Appendix}

\appendix

\section{\label{app:integer-values}The algebra of integer valued polynomial}

One of the tools we need along the way were rational polynomial $f\in\QQ\left[x\right]$
where $f\left(\ZZ\right)\subseteq\ZZ$, which are called \textbf{integer
valued polynomials}. Of course, if we can write $f\left(x\right)\in\ZZ\left[x\right]$,
then $f$ is integer valued, but the other direction is not true.
For example, $2\mid g\left(n\right)=n\left(n+1\right)$ for every
integer $n$, so that $f\left(x\right)=\frac{x\left(x+1\right)}{2}$
is an integer valued polynomial which is not in $\ZZ\left[x\right]$.
There are many more such polynomials, which we write as follows:
\begin{defn}
For $n\in\NN$ we define the polynomial $\binom{x}{n}=\prod_{0}^{n-1}\frac{\left(x-i\right)}{i+1}=\frac{x\cdot\left(x-1\right)\cdots\left(x-n+1\right)}{n!}$.
This is a polynomial of degree $n$ in $\QQ\left[x\right]$, so that
$\left\{ \binom{x}{n}\right\} _{0}^{\infty}$ is a $\QQ$-basis for
$\QQ\left[x\right]$. In particular, for a nonnegative integers $x$,
we simply get the binomials.
\end{defn}

The integer valued polynomials were fully described by Pólya in \cite{polya_uber_1915},
and were shown to contain exactly the integer combinations of the
$\binom{x}{n}$ above. For the ease of the reader, we add the proof
for this result here.
\begin{lem}
For every integer $m$, we have that $\binom{m}{n}\in\ZZ$. 
\end{lem}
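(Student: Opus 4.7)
The plan is to split on the sign of $m$ and in each case reduce to the standard combinatorial binomial coefficient, whose integrality is classical.

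First I would dispose of the case $m \geq 0$. If $0 \leq m < n$, one of the factors $(m-i)$ with $0 \leq i \leq n-1$ vanishes, so $\binom{m}{n} = 0 \in \ZZ$. If $m \geq n$, then
\[
\binom{m}{n} \;=\; \frac{m(m-1)\cdots(m-n+1)}{n!} \;=\; \frac{m!}{n!\,(m-n)!},
\]
which equals the number of $n$-element subsets of an $m$-element set (or, equivalently, is shown to be an integer by an elementary induction on $m$ using Pascal's identity $\binom{m}{n} = \binom{m-1}{n} + \binom{m-1}{n-1}$, verified first as a polynomial identity in $\QQ[x]$).

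For $m < 0$, write $m = -k$ with $k \geq 1$. A direct manipulation of the defining product gives the negation formula
\[
\binom{-k}{n} \;=\; \frac{(-k)(-k-1)\cdots(-k-n+1)}{n!} \;=\; (-1)^n\,\frac{k(k+1)\cdots(k+n-1)}{n!} \;=\; (-1)^n \binom{k+n-1}{n}.
\]
Since $k+n-1 \geq 0$, the right-hand side is an integer by the previous case, hence so is $\binom{m}{n}$.

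There is no real obstacle here: the only mild step is checking the negation identity above, which is a one-line reindexing of the numerator, and confirming the usual Pascal recursion as a polynomial identity if one prefers that route over the combinatorial interpretation. Both approaches give the result immediately.
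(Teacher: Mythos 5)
Your proof is correct and follows essentially the same route as the paper: split on $0\le m<n$ (zero), $m\ge n$ (ordinary binomial coefficient), and $m<0$ (reduce via the negation identity $\binom{m}{n}=(-1)^n\binom{n-m-1}{n}$ to the nonnegative case). The only difference is that you spell out the reindexing step that derives the negation identity, which the paper states without derivation.
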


\begin{proof}
For $m\geq n$, these are just the binomial coefficients $\frac{m!}{n!\left(m-n\right)!}$,
which are integers, and for $0\leq m<n$ by definition $\binom{m}{n}=0$.
Finally, for negative $m<0$ we get that $\binom{m}{n}=\left(-1\right)^{n}\binom{n-(m+1)}{n}$
where $n-\left(m-1\right)\geq n$ which is again a binomial coefficient
(up to a sign) and therefore an integer.
\end{proof}
In the following, by writing $d\mid q$ for $d\in\ZZ,q\in\QQ$, we
mean that $q$ has to be an integer divisible by $d$.
\begin{lem}
\label{lem:binomial-polynomials}Given a general polynomial $f\left(x\right)=\sum_{0}^{d}a_{n}\binom{x}{n}\in\QQ\left[x\right]$
and an integer $k$ the following are equivalent:
\begin{enumerate}
\item For all $0\leq n\leq d$ we have $k\mid a_{n}$ ,
\item For all $m\in\ZZ$ we have $k\mid f\left(m\right)$ ,
\item For $m=0,1,...,d$ , we have $k\mid f\left(m\right)$ ,
\item There exists $m_{0}$ such that $k\mid f\left(m_{0}+m\right)$ for
$m=0,1,...,d$, and
\end{enumerate}
\end{lem}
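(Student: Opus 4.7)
The plan is to prove the equivalence by the cyclic chain $(1) \Rightarrow (2) \Rightarrow (3) \Rightarrow (4) \Rightarrow (1)$, where the first three implications are essentially definitional and the real content is in $(4) \Rightarrow (1)$. For $(1) \Rightarrow (2)$, I would invoke the preceding lemma that $\binom{m}{n} \in \ZZ$ for every $m \in \ZZ$, so each term $a_n \binom{m}{n}$ is divisible by $k$. Both $(2) \Rightarrow (3)$ and $(3) \Rightarrow (4)$ are trivial: (3) is a special case of (2), and (4) follows from (3) by choosing $m_0 = 0$.

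The main work is the implication $(4) \Rightarrow (1)$, which I would handle by induction on $d = \deg f$ using the forward difference operator $\Delta f(x) := f(x+1) - f(x)$. The key computational fact is $\Delta \binom{x}{n} = \binom{x}{n-1}$, which iterates to $\Delta^j \binom{x}{n} = \binom{x}{n-j}$, and in particular $\Delta^d \binom{x}{n} = 0$ for $n < d$ while $\Delta^d \binom{x}{d} = 1$. Applied to $f(x) = \sum_{n=0}^d a_n \binom{x}{n}$, this gives $\Delta^d f(x) \equiv a_d$ as a constant polynomial, so
\[
a_d \;=\; \Delta^d f(m_0) \;=\; \sum_{j=0}^{d} (-1)^{d-j}\binom{d}{j} f(m_0+j).
\]
Since hypothesis (4) says $k$ divides every term on the right, we conclude $k \mid a_d$.

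For the inductive step, I would consider the polynomial $g(x) := f(x) - a_d \binom{x}{d} = \sum_{n=0}^{d-1} a_n \binom{x}{n}$ of degree at most $d-1$. For $j = 0, 1, \ldots, d-1$ we have $g(m_0+j) = f(m_0+j) - a_d \binom{m_0+j}{d}$; both summands are divisible by $k$ (the first by hypothesis, the second because $k \mid a_d$ and $\binom{m_0+j}{d} \in \ZZ$), so $g$ itself satisfies condition (4) with the same $m_0$ and $d$ replaced by $d-1$. By the inductive hypothesis applied to $g$, we obtain $k \mid a_n$ for $n = 0, 1, \ldots, d-1$, and together with $k \mid a_d$ this yields (1). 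The base case $d = 0$ is immediate since $f = a_0$ and $k \mid f(m_0) = a_0$.

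I do not anticipate a serious obstacle here; the only point requiring any care is ensuring that one has exactly $d+1$ consecutive values available in hypothesis (4), which is precisely what makes the $d$-th finite difference computable at $m_0$. Note that the appearance of $\binom{x}{n}$ as a $\QQ$-basis for $\QQ[x]$ (stated in the paper) guarantees the expansion $f = \sum a_n \binom{x}{n}$ is unique, so condition (1) is well defined.
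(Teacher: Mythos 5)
Your proof is correct, and for the key non-trivial implication you take a genuinely different route from the paper. The paper proves $(3)\Rightarrow(1)$ by forward substitution: since $\binom{m}{n}=0$ for $0\leq m<n$ and $\binom{n}{n}=1$, the map sending $(a_0,\dots,a_d)$ to $(f(0),\dots,f(d))$ is unitriangular, so one recovers $a_0,a_1,\dots,a_d$ in increasing order via $a_m=f(m)-\sum_{n<m}a_n\binom{m}{n}$; it then handles $(4)$ separately by translating to $m_0=0$ and invoking $(3)\Rightarrow(2)$ for the shifted polynomial. You instead prove $(4)\Rightarrow(1)$ directly using finite differences, extracting the coefficients in decreasing order: $a_d=\Delta^d f(m_0)$ is an integer combination of the $d+1$ evaluations $f(m_0),\dots,f(m_0+d)$, hence divisible by $k$, after which you peel off $a_d\binom{x}{d}$ and induct on $d$. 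The two arguments are dual in spirit — both invert an integer unitriangular map — but yours works at an arbitrary anchor $m_0$ in one pass and therefore absorbs the translation step the paper needs for $(4)$; the paper's is slightly more elementary in that it never introduces the operator $\Delta$, and its reduction to $k=1$ (which you skip, harmlessly) removes divisibility bookkeeping from the inner argument. One small point worth making explicit in your write-up: the induction applies the statement to $g=f-a_d\binom{x}{d}$, whose degree may be strictly less than $d-1$, so you implicitly use that the lemma is stated for $f$ written as a sum up to some index $d$ rather than for $d=\deg f$ exactly; as written in the paper the index $d$ is indeed just an upper bound, so this is fine, but it is the one place a reader could stumble.
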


\begin{proof}
Note first that considering the polynomial $\frac{f\left(x\right)}{k}$
instead, it is enough to prove the lemma for $k=1$. Namely, we just
need to show that the coefficients\textbackslash evaluation are integers.
\begin{itemize}
\item $\left(1\right)\Rightarrow\left(2\right)$: follows from the fact
that $\binom{m}{n}$ are integers for all $m$.
\item $\left(2\right)\Rightarrow\left(3\right)$: is trivial.
\item $\left(3\right)\Rightarrow\left(1\right)$: Since $\binom{n}{n}=1$
and $\binom{m}{n}=0$ when $0\leq m<n$, it follows that for $0\leq m\leq d$
we have 
\[
f\left(m\right)=a_{m}+\sum_{n=0}^{m-1}a_{n}\binom{m}{n}\qquad\iff\qquad a_{m}=f\left(m\right)-\sum_{n=0}^{m-1}a_{n}\binom{m}{n}.
\]
So if $a_{n}\in\ZZ$ for $0\leq n<m$, then since $f\left(m\right)\in\ZZ$
by assumption and $\binom{m}{n}\in\ZZ$, we conclude that $a_{m}\in\ZZ$.
Thus, by induction we get that $a_{n}\in\ZZ$ for all $0\leq n\leq d$,
namely we get $\left(1\right)$.
\item $\left(2\right)\Rightarrow\left(4\right)$: is trivial.
\item $\left(4\right)\Rightarrow\left(2\right)$: If $f\left(m_{0}+m\right)\in\ZZ$
for $m=0,...,d$, then setting $g\left(m\right)=f\left(m_{0}+m\right)$
we see that $g\left(m\right)\in\ZZ$ for $m=0,...,d$. By the $\left(3\right)\Rightarrow\left(2\right)$
direction for the degree $d$ polynomial $g$ we get that $g\left(m\right)=f\left(m_{0}+m\right)\in\ZZ$
for all $m$, which is exactly condition $\left(2\right)$ for the
polynomial $f$ and we are done.
\end{itemize}
\end{proof}

\newpage{}

\bibliographystyle{plain}
\bibliography{apery}

\begin{thebibliography}{10}

\bibitem{apery_irrationalite_1979}
Roger Ap{\'e}ry.
\newblock Irrationalit{\'e} de $\zeta$(2) et $\zeta$(3).
\newblock {\em Ast{\'e}risque}, 61(11-13):1, 1979.

\bibitem{apostol_introduction_1998}
Tom~M. Apostol.
\newblock {\em Introduction to analytic number theory}.
\newblock Springer Science \& Business Media, 1998.

\bibitem{beukers_note_2013}
F.~Beukers.
\newblock A note on the irrationality of $\zeta$(2) and $\zeta$(3).
\newblock {\em Pi: A Source Book}, 11:434, 2013.
\newblock Publisher: Springer Science \& Business Media.

\bibitem{david_euler_2023}
Ofir David.
\newblock On {Euler} polynomial continued fractions, December 2023.
\newblock arXiv:2308.02567 [math].

\bibitem{einsiedler_ergodic_2013}
Manfred Einsiedler and Thomas Ward.
\newblock Ergodic theory.
\newblock {\em Springer}, 4(4):4--5, 2013.
\newblock Publisher: Springer.

\bibitem{jones_continued_1980}
William~B. Jones and Wolfgang~J. Thron.
\newblock {\em Continued fractions: {Analytic} theory and applications},
  volume~11.
\newblock Addison-Wesley Publishing Company, 1980.

\bibitem{laughlin_real_2004}
James~Mc Laughlin and Nancy~J. Wyshinski.
\newblock Real numbers with polynomial continued fraction expansions.
\newblock {\em arXiv preprint math/0402462}, 2004.

\bibitem{pincherle_delle_1894}
Salvatore Pincherle.
\newblock Delle funzioni ipergeometriche e di varie questioni ad esse
  attinenti.
\newblock {\em Giorn. Mat. Battaglini}, 32:209--291, 1894.

\bibitem{polya_uber_1915}
Georg P{\'o}lya.
\newblock {\"U}ber ganzwertige ganze {Funktionen}.
\newblock {\em Rendiconti del Circolo Matematico di Palermo (1884-1940)},
  40(1):1--16, 1915.
\newblock Publisher: Springer.

\bibitem{raayoni_generating_2021}
Gal Raayoni, Shahar Gottlieb, Yahel Manor, George Pisha, Yoav Harris, Uri
  Mendlovic, Doron Haviv, Yaron Hadad, and Ido Kaminer.
\newblock Generating conjectures on fundamental constants with the {Ramanujan}
  {Machine}.
\newblock {\em Nature}, 590(7844):67--73, 2021.
\newblock Publisher: Nature Publishing Group UK London.

\bibitem{ramanujan_machine_research_group_ramanujan_2023}
{Ramanujan Machine Research Group}.
\newblock Ramanujan {Machine} {Research} {Tools} - {Solvers}, September 2023.
\newblock Available online at
  \url{https://github.com/RamanujanMachine/ResearchTools/blob/master/ramanujan/solvers\_readme.md}.

\bibitem{rivoal_fonction_2000}
Tanguy Rivoal.
\newblock La fonction z{\^e}ta de {Riemann} prend une infinit{\'e} de valeurs
  irrationnelles aux entiers impairs.
\newblock {\em Comptes Rendus de l'Acad{\'e}mie des Sciences-Series
  I-Mathematics}, 331(4):267--270, 2000.
\newblock Publisher: Elsevier.

\bibitem{van_der_poorten_proof_1979}
Alf Van~der Poorten.
\newblock A proof that {Euler} missed.
\newblock {\em Math. Intelligencer}, 1(4):195--203, 1979.

\bibitem{zudilin_one_2001}
Wadim Zudilin.
\newblock One of the numbers $\zeta$ (5), $\zeta$ (7), $\zeta$ (9), $\zeta$
  (11) is irrational.
\newblock {\em Uspekhi Mat. Nauk}, 56(4):149--150, 2001.

\end{thebibliography}

\end{document}